\numberwithin{equation}{section}
\theoremstyle{plain}
\newtheorem{prop}{Proposition}[section]
\newtheorem{thm}[prop]{Theorem}
\newtheorem{cor}[prop]{Corollary}
\newtheorem{lem}[prop]{Lemma}
\newtheorem{cond}[prop]{Condition}
\theoremstyle{remark}
\newtheorem{remark}[prop]{Remark}
\newcommand{\eps}{\varepsilon}
\newcommand{\N}{\mathbb{N}}
\newcommand{\R}{\mathbb{R}}
\newcommand{\Z}{\mathbb{Z}}
\newcommand{\dd}{\mathrm{d}}
\def\disp{\displaystyle}
\newcommand{\Ex}{\mathbb{E}}
\newcommand{\Cov}{\mathrm{Cov}}
\newcommand{\1}{\mathbf{1}}
\newcommand{\ip}[1]{\lfloor #1 \rfloor}
\renewcommand{\Pr}{\mathbb{P}}
\newcommand{\p}{\overset{\Pr}{\to}}
\newcommand{\as}{\overset{as}{\to}}
\newcommand{\C}{\mathrm{C}}
\newcommand{\IF}{{I\hspace{-0.7mm}F}}
\newcommand{\sss}[1]{\scriptscriptstyle (#1)}
\newcommand{\sm}{\scriptscriptstyle (m)}
\newcommand{\sz}{\scriptscriptstyle (0)}
\begin{document}

\begin{frontmatter}
%%%%%%%%%%%%%%%%%%%%%%%%%%%%%%%%%%%%%%%%%%%%%%
%%                                          %%
%% Enter the title of your article here     %%
%%                                          %%
%%%%%%%%%%%%%%%%%%%%%%%%%%%%%%%%%%%%%%%%%%%%%%
\title{Open-end nonparametric sequential change-point detection based on the retrospective CUSUM statistic}
\runtitle{Open-end monitoring based on the retrospective CUSUM statistic}

\begin{aug}
%%%%%%%%%%%%%%%%%%%%%%%%%%%%%%%%%%%%%%%%%%%%%%
%%Only one address is permitted per author. %%
%%Only division, organization and e-mail is %%
%%included in the address.                  %%
%%Additional information can be included in %%
%%the Acknowledgments section if necessary. %%
%%%%%%%%%%%%%%%%%%%%%%%%%%%%%%%%%%%%%%%%%%%%%%
  \author[M]{\fnms{Mark} \snm{Holmes}\ead[label=eM]{holmes.m@unimelb.edu.au}}
  \and
  \author[I]{\fnms{Ivan} \snm{Kojadinovic}\ead[label=eI]{ivan.kojadinovic@univ-pau.fr}}
%%%%%%%%%%%%%%%%%%%%%%%%%%%%%%%%%%%%%%%%%%%%%%
%% Addresses                                %%
%%%%%%%%%%%%%%%%%%%%%%%%%%%%%%%%%%%%%%%%%%%%%%
  \address[M]{School of Mathematics \& Statistics, The University of Melbourne, Parkville, VIC 3010, Australia, \printead{eM}}
  \address[I]{CNRS / Universit\'e de Pau et des Pays de l'Adour / E2S UPPA, Laboratoire de math\'ematiques et applications, IPRA, UMR 5142, B.P. 1155, 64013 Pau Cedex, France, \printead{eI}}
\end{aug}

\begin{abstract}
The aim of online monitoring is to issue an alarm as soon as there is significant evidence in the collected observations to suggest that the underlying data generating mechanism has changed. This work is concerned with open-end, nonparametric procedures that can be interpreted as statistical tests. The proposed monitoring schemes consist of computing the so-called retrospective CUSUM statistic (or minor variations thereof) after the arrival of each new observation. After proposing suitable threshold functions for the chosen detectors, the asymptotic validity of the procedures is investigated in the special case of monitoring for changes in the mean, both under the null hypothesis of stationarity and relevant alternatives. To carry out the sequential tests in practice, an approach based on an asymptotic regression model is used to estimate high quantiles of relevant limiting distributions. Monte Carlo experiments demonstrate the good finite-sample behavior of the proposed monitoring schemes and suggest that they are superior to existing competitors as long as changes do not occur at the very beginning of the monitoring. Extensions to statistics exhibiting an asymptotic mean-like behavior are briefly discussed. Finally, the application of the derived sequential change-point detection tests is  succinctly illustrated on temperature anomaly data.
\end{abstract}

\begin{keyword}[class=MSC2010]
\kwd[Primary ]{62L99}
\kwd{62E20}
\kwd[; secondary ]{62G10}
\end{keyword}

\begin{keyword}
  \kwd{change-point detection}
  \kwd{online monitoring}
  \kwd{open-end procedures}
  \kwd{sequential testing}
\end{keyword}

\end{frontmatter}

%%%%%%%%%%%%%%%%%%%%%%%%%%%%%%%%%%%%%%%%%%%%%%%%%%%%%%%%%%%%%%%%%%%%%%%%%%%%%%

\section{Introduction}

In situations in which observations are continuously collected over time, the aim of \emph{sequential} or \emph{online change-point detection} is to issue an alarm as soon as possible if it is thought that the probabilistic properties of the underlying unobservable data generating mechanism have changed. While this problem has a long history in \emph{statistical process control} \citep[see, e.g.,][for an overview]{Lai01,Mon07}, we adopt herein the alternative perspective proposed in the seminal work of \cite{ChuStiWhi96} and treat the issue from the point of view of statistical testing. To fix ideas, assume that we have at hand an initial stretch $X_1,\dots,X_m$ (frequently referred to as the \emph{learning sample}) from a univariate stationary time series $(X_i)_{i \in \Z}$. As new observations arrive, the aim is to look for evidence against stationarity and issue an alarm if such evidence is deemed significant. Specifically, assuming that the $k$th observation has been collected, a positive statistic $D_m(k)$ measuring departure from stationarity is computed from the available observations $X_1,\dots,X_k$ and compared to a suitably chosen threshold $w(k/m)$. If $D_m(k) > w(k/m)$, the hypothesis that $X_1,\dots,X_k$ is a stretch from a stationary time series is rejected and the monitoring stops. Otherwise, a new observation $X_{k+1}$ is collected and the previous steps are repeated using $X_1,\dots,X_{k+1}$.

Among such procedures, one can distinguish between \emph{closed-end} approaches for which the monitoring eventually stops if stationarity is not rejected after the arrival of observation $X_n$, $n > m$, and \emph{open-end} approaches which, in principle, can continue indefinitely. The sequential testing procedures studied in this work pertain to this second category and, for that reason, their null hypothesis is
\begin{equation}
  \label{eq:H0}
  H_0: \,  X_1,\dots, X_m, X_{m+1},X_{m+2},\dots, \text{ is a stretch from a stationary time series}.
\end{equation}

Their interpretation as statistical tests implies that, given a significance level $\alpha \in (0,1/2)$, the thresholds $w(k/m)$, $k \geq m+1$, need to be chosen such that, ideally, under $H_0$,
\begin{equation}
  \label{eq:typeIerror}
  \Pr\{D_m(k) \leq w(k/m) \text{ for all } k \geq m+1 \} =  \Pr \left\{ \sup_{m+1 \leq k < \infty} \frac{D_m(k)}{w(k/m)} \leq 1 \right\} \geq 1 - \alpha.
\end{equation}
The previous display shows how closely the choice of the so-called \emph{detector} $D_m$ and the \emph{threshold function} $w$ are related. Intuitively, under $H_0$, the sample paths of the stochastic process $\{ D_m(k)/w(k/m) \}_{k \geq m+1}$ should fluctuate but not exceed a constant threshold too often while, when $H_0$ is not true, they are expected to rapidly exceed it after a change in the data generating process has occurred.

The starting point of our investigations is the recent seminal work of \cite{GosKleDet20} who study open-end monitoring schemes sensitive to potential changes in a parameter $\theta$ (such as the mean) of a time series. A first natural approach to tackle this problem, often referred to as the \emph{ordinary CUSUM (cumulative sum)} in the sequential change-point detection literature, consists of comparing an estimator $\theta_{1:m}$ of $\theta$ computed from the learning sample $X_1,\dots,X_m$ with an estimator $\theta_{m+1:k}$ of $\theta$ computed from the observations $X_{m+1},\dots,X_k$ collected after the monitoring has started; see, e.g., \cite{HorHusKokSte04}, \cite{AueHorHusKok06} as well as the references given in the recent review by \cite{KirWeb18}. The idea of \cite{GosKleDet20} is to define detectors that take into account \emph{all} of the differences $\theta_{1:j} - \theta_{j+1:k}$, $j \in \{m+1,\dots,k-1\}$. This approach, that can be regarded as adapted from \emph{retrospective} (or \emph{offline}, \emph{a posteriori}) change-point detection (which assumes that data collection has been completed \emph{before} testing is carried out), treats each $j \in \{m+1,\dots,k-1\}$ as a potential change-point. In their experiments, \cite{GosKleDet20} found that their approach is not only more powerful than the ordinary CUSUM but also more powerful than the so-called \emph{Page CUSUM} procedure which consists of defining a detector from the differences $\theta_{1:m} - \theta_{j+1:k}$, $j \in \{m+1,\dots,k-1\}$; see, e.g, \cite{Fre15} and \cite{KirWeb18}.

When computed after the $k$th observation has been collected, the detector proposed by \cite{GosKleDet20} (thus involving all the differences $\theta_{1:j} - \theta_{j+1:k}$, $j \in \{m+1,\dots,k-1\}$) does not however coincide with the so-called \emph{retrospective CUSUM statistic} that could be computed from $X_1,\dots,X_k$ (also involving all the differences $\theta_{1:j} - \theta_{j+1:k}$, $j \in \{m+1,\dots,k-1\}$;  see, e.g., \citealp{CsoHor97,AueHor13} and the references therein). Reasons that led \cite{GosKleDet20} not to consider such an approach are discussed in their Remark~2.1. As shall be explained in the next section, we believe that an open-end monitoring scheme using the retrospective CUSUM statistic as detector could be even more powerful than the procedure of \cite{GosKleDet20} as long as changes do not occur at the very beginning of the monitoring.

The aim of this work is to address the theoretical and practical issues associated with defining a nonparametric detector for open-end monitoring such that it coincides at each~$k$ with the retrospective CUSUM statistic. The theoretical issues are mostly related to the choice of the threshold function, while the practical issues come from the fact that quantiles of the underlying limiting distribution required to carry out the sequential test are harder to estimate.

This paper is organized as follows. In the second section, we propose three open-end nonparametric monitoring schemes related to the retrospective CUSUM statistic designed to be sensitive to changes in the mean of univariate time series. Their asymptotic behavior as the size $m$ of the learning sample tends to infinity is studied in the third section both under the null hypothesis of stationarity and relevant alternatives. Section~\ref{sec:quantiles} is concerned with the estimation of high quantiles of related limiting distributions necessary in practice to carry out the sequential tests. The fifth section presents a summary of extensive numerical experiments demonstrating the good finite-sample properties of the resulting sequential testing procedures. An extension to time series parameters whose estimators admit an asymptotic mean-like linearization as considered in \cite{GosKleDet20} is briefly discussed in Section~\ref{sec:linear}. A short illustration involving temperature anomaly data concludes the work.

All proofs are deferred to the appendices. Throughout the paper, all convergences are for $m \to \infty$ unless mentioned otherwise. A preliminary implementation of the studied tests is available in the package {\tt npcp} \citep{npcp} for the \textsf{R} statistical system \citep{Rsystem}.

\section{The retrospective CUSUM for monitoring changes in the mean}
\label{sec:det}

Our aim is to derive open-end nonparametric sequential change-point detection procedures that are particularly sensitive to alternative hypotheses of the form
\begin{equation}
  \label{eq:H1}
  H_1: \, \exists \, k^\star \geq m \text{ such that }  \Ex(X_1) = \dots = \Ex(X_{k^\star}) \neq \Ex(X_{k^\star+1}) = \Ex(X_{k^\star+2}) = \dots.
\end{equation}

After the arrival of the $k$th observation with $k > m$, the data at hand consist of the stretch $X_1,\dots,X_k$. If we were in the context of retrospective change-point detection, a natural test statistic would be the so-called \emph{retrospective CUSUM statistic} \citep[see, e.g.,][and the references therein]{CsoHor97,AueHor13} defined by
\begin{equation}
  \label{eq:Rk}
R_k = \max_{1 \leq j \leq k-1}  \frac{j (k - j)}{k^{3/2}} |  \bar X_{1:j} - \bar X_{j+1:k} |,
\end{equation}
where
\begin{equation}
  \label{eq:Xjk}
   \bar X_{j:k} =
  \left\{
    \begin{array}{ll}
      \disp \frac{1}{k-j+1}\sum_{i=j}^k X_i, \qquad & \text{if } j \leq k, \\
      0, \qquad &\text{otherwise}.
    \end{array}
  \right.
\end{equation}
In the definition of $R_k$, we see that every $j \in \{1,\dots,k-1\}$ is treated as a potential change-point in the sequence $X_1,\dots,X_k$. The maximum over $j$ then implies that $R_k$ will be large as soon as the difference between $\bar X_{1:j}$ and $\bar X_{j+1:k}$ is large for some $j$.

In the sequential context considered in this work, since $X_1, \dots, X_m$ is the learning sample known to be a stretch from a stationary time series, a first natural modification of~\eqref{eq:Rk} is to restrict the maximum over $j$ to $j \in \{m,\dots,k-1\}$. This is the idea considered by \cite{DetGos19} in a closed-end setting who, additionally, replaced the normalizing factor $k^{3/2}$ by $m^{3/2}$ so that the asymptotics of the corresponding monitoring scheme could be studied as the size $m$ of the learning sample tends to infinity. The resulting detector is
\begin{equation}
  \label{eq:Rm}
  R_m(k) =  \max_{m \leq j \leq k-1} \frac{j (k-j)}{m^{3/2}}  | \bar X_{1:j} - \bar X_{j+1:k} |, \qquad k \geq m+1.
\end{equation}
In an open-end setting, \cite{GosKleDet20} choose however not to consider the detector $R_m$ (see Remark~2.1 in the latter reference) but suggested  instead the detector
\begin{equation}
  \label{eq:Em}
  E_m(k) =  \max_{m \leq j \leq k-1} \frac{k-j}{m^{1/2}}  | \bar X_{1:j} - \bar X_{j+1:k} |, \qquad k \geq m+1.
\end{equation}
The difference between $R_m$ and $E_m$ evidently lies in the weighting of the absolute differences of means $|\bar X_{1:j} - \bar X_{j+1:k} |$, $j \in \{m,\dots,k-1\}$. Instead of weighting $|\bar X_{1:j} - \bar X_{j+1:k} |$ by $j (k-j) / m^2$,~\eqref{eq:Em} replaces $j/m$ by $1$. While this modification may be beneficial in terms of power when $k$ is close to $m$, it could have a negative impact when $k$ is substantially larger than $m$ because, then, $j (k-j) / m^2$ can be substantially larger than $(k-j) / m$. In other words, we suspect that, for changes not occurring at the beginning of the monitoring, a suitable detection scheme based on $R_m$ could be more powerful than the one proposed by \cite{GosKleDet20} based on $E_m$.

\begin{remark}
As mentioned in the introduction, the simplest detector in an open-end setting is probably the so-called \emph{ordinary CUSUM} initially considered by \cite{HorHusKokSte04} for investigating changes in the parameters of linear models. With the aim of detecting changes in the mean, it can be defined by
\begin{equation}
  \label{eq:Qm}
  Q_m(k) =  \frac{k-m}{m^{1/2}}  | \bar X_{1:m} - \bar X_{m+1:k} |, \qquad k \geq m+1.
\end{equation}
Following \cite{GosKleDet20}, we will use it as a benchmark in our Monte Carlo experiments.
\end{remark}

As explained in the introduction, the choice of a detector needs to be accompanied by the choice of a suitable threshold function. To heuristically justify our choice of a suitable threshold function for the detector $R_m$ in~\eqref{eq:Rm}, we momentarily consider the closed-end setting in which monitoring stops at the latest after observation $X_n$ is collected. Following \cite{DetGos19} among others, to be able to study the sequential testing scheme asymptotically, we set $n = \ip{mT}$ for some real $T > 1$. Then, under $H_0$ in~\eqref{eq:H0}, assuming that the  functional central limit theorem holds for $(X_i)_{i \in \Z}$, it can be verified using relatively simple arguments (see, e.g., \citealp{DetGos19}, Section~3, or \citealp{KojVer20a}, Section 2.3) that $\{R_m(\ip{mt})\}_{t \in [1,T]}$ converges in distribution to $\{L(t)\}_{t \in [1,T]}$, where
$$
L(t) = \sigma \sup_{1 \leq s \leq t} | tW(s) - sW(t)|, \qquad t \in [1,T],
$$
$W$ is a standard Brownian motion and $\sigma^2 = \sum_{i \in \Z} \Cov(X_0,X_i) > 0$ is the finite long-run variance of $(X_i)_{i \in \Z}$. For any fixed $t \in [1, T]$, by Brownian scaling and the substitution $u=s/t$,  $t^{-3/2} \sigma^{-1} L(t)$ is equal in distribution to
\begin{equation*}
 % \label{eq:dist:t}
\sup_{1 \leq s \leq t} \left| W\left( \frac{s}{t} \right) - \frac{s}{t} W(1) \right| = \sup_{1/t \leq u \leq 1} | W(u) - u W(1) |.
\end{equation*}
Hence, for large $t$, the distribution of $t^{-3/2}  \sigma^{-1} L(t)$ is close to that of the supremum of a Brownian bridge on $[0,1]$. As a consequence, under $H_0$ in~\eqref{eq:H0}, the distribution of $t^{-3/2}  \sigma^{-1} R_m(\ip{mt})$  stabilizes as $m$ and $t$ increase. The latter observation suggests the possibility of an open-end sequential testing scheme based on $R_m$ in~\eqref{eq:Rm} with a threshold function that is not too different from $t \mapsto t^{3/2}$.

As shall become clear from Theorem~\ref{thm:H0} below, in order to ensure that~\eqref{eq:typeIerror} is fully meaningful when $D_m = R_m$, the corresponding threshold function actually needs to diverge to $\infty$ (as $t\to \infty$) slightly faster than $t^{3/2}$. We propose to use as threshold function for~$R_m$
\begin{equation}
  \label{eq:wR}
  w_R(t)  = t^{3/2 + \eta} w_\gamma(t), \qquad t \in [1,\infty),
\end{equation}
where $\eta > 0$ is a real parameter and
\begin{equation}
  \label{eq:w:gamma}
  w_\gamma(t) = \max \left\{ \left(\frac{t-1}{t}\right)^\gamma, \epsilon \right\},  \qquad t \in [1,\infty),
\end{equation}
with $\gamma \geq 0$ another real parameter and $\epsilon > 0$ a technical constant that can be taken very small in practice (we used $\epsilon = 10^{-10}$ in our implementation).

\begin{figure}[t!]
\begin{center}
  \includegraphics*[width=1\linewidth]{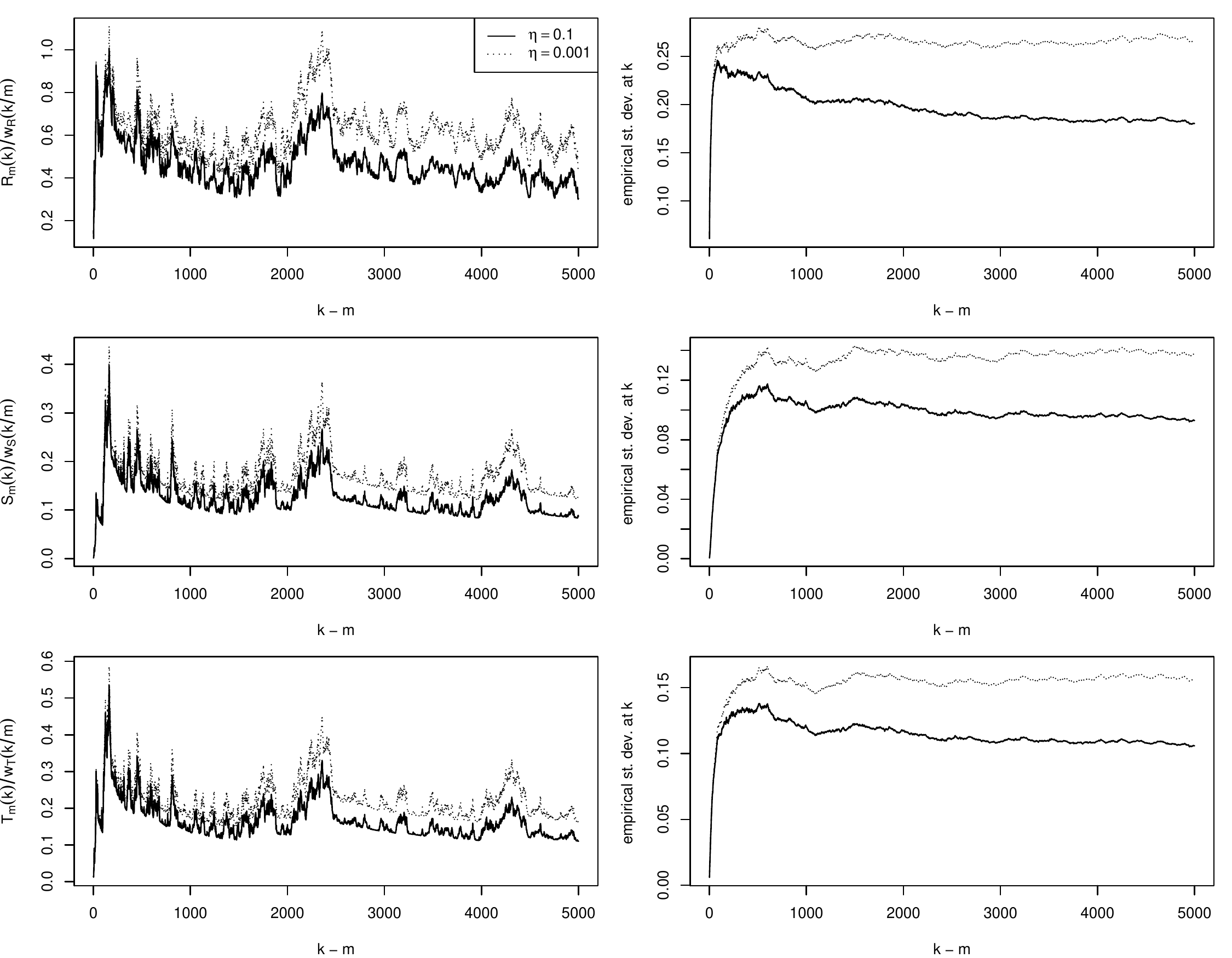}
  \caption{\label{fig:trajectories} Left column:  for $m=100$, $\gamma = 0$ and $\epsilon = 10^{-10}$, the solid lines represent one sample path of $\{\{ w_R(k/m) \}^{-1} R_m(k)\}_{m+1 \leq k \leq m+5000}$  (first row), $\{\{ w_S(k/m) \}^{-1} S_m(k)\}_{m+1 \leq k \leq m+5000}$ (second row) and $\{\{ w_T(k/m) \}^{-1} T_m(k)\}_{m+1 \leq k \leq m+5000}$ (third row) for $\eta=0.1$ computed from an independent sequence of standard normal random variables. The dotted lines represent the sample paths computed from the same sequence but with $\eta = 0.001$ instead. Right column: corresponding empirical standard deviations at $k$ against $k - m$ computed from 1000 sample paths.}
\end{center}
\end{figure}

Let us first explain the role of the parameter $\eta$. Following the perspective adopted in the discussion below~\eqref{eq:typeIerror}, the resulting monitoring can be seen as consisting of computing $\{ w_R(k/m) \}^{-1} R_m(k)$ for $k \geq m+1$. It is elementary that for any fixed $k\ge m+1$, as we increase $\eta$ both the mean and the variance of $\{ w_R(k/m) \}^{-1} R_m(k)$ decrease. The top-left plot of Figure~\ref{fig:trajectories} displays one sample path of $\{ \{ w_R(k/m) \}^{-1} R_m(k) \}_{m+1 \leq k \leq m+5000}$ for $m=100$, $\gamma=0$ and $\eta = 0.1$  computed from a sequence of independent standard normals (solid line) and the sample path  computed from the same sequence but with $\eta = 0.001$ instead (dotted line). Unsurprisingly, because of the factor $(k/m)^{-\eta}$ in $\{ w_R(k/m) \}^{-1} R_m(k)$, the sample path with $\eta = 0.1$ is below the sample path with $\eta = 0.001$. This effect of $\eta$ is confirmed by the top-right plot of Figure~\ref{fig:trajectories} which displays the corresponding empirical standard deviations at $k$ against $k-m$ computed from 1000 sample paths. As expected, increasing the parameter $\eta$ increases the rate of convergence (as $k \to \infty$) of $\{ w_R(k/m) \}^{-1} R_m(k)$ (and its mean and variance) to zero. Intuitively, in the context of open-end monitoring, one would therefore want $\eta$ to be very small so that there is little reduction in variability as time elapses. The practical choice of the parameter $\eta$ will be discussed in detail in Section~\ref{sec:quantiles}.

 Let us now explain the role of the parameter $\gamma$. The multiplication by the function $w_\gamma$ in~\eqref{eq:wR} aims at possibly improving the finite-sample performance of the sequential testing scheme at the beginning of the monitoring and has a negligible effect later. The use of such a modification is common in the literature and can be found for instance in \cite{HorHusKokSte04}, \cite{Fre15}, \cite{KirWeb18} and \cite{GosKleDet20}, among many others. Notice that, unlike what is frequently done in the literature, we do not impose that $\gamma$ be strictly smaller than $1/2$. To provide some further insight, consider the top-left plot of Figure~\ref{fig:R-gamma} which displays the empirical 95\% quantile (solid line), empirical standard deviation (dashed line) and sample mean (dotted line) of $\{ w_R(k/m) \}^{-1} R_m(k)$ for $m=100$, $\eta = 0.001$ and $\gamma = 0$ against $k - m$ computed from 1000 sample paths computed from independent standard normal sequences. As one can see, because of the small value of~$\eta$, the distribution of $\{ w_R(k/m) \}^{-1} R_m(k)$ appears to stabilize rather quickly as $k$ increases. The speed at which this occurs can be increased by increasing $\gamma$. For instance, by comparing the top-left and bottom-left plots of Figure~\ref{fig:R-gamma}, one can clearly see that the distribution of $\{ w_R(k/m) \}^{-1} R_m(k)$ stabilizes quicker as $k$ increases for $\gamma = 0.25$ than for $\gamma = 0$. However, as one can see from the plot for $\gamma = 0.45$, the value of $\gamma$ should not be taken too large.

\begin{figure}[t!]
\begin{center}
  \includegraphics*[width=1\linewidth]{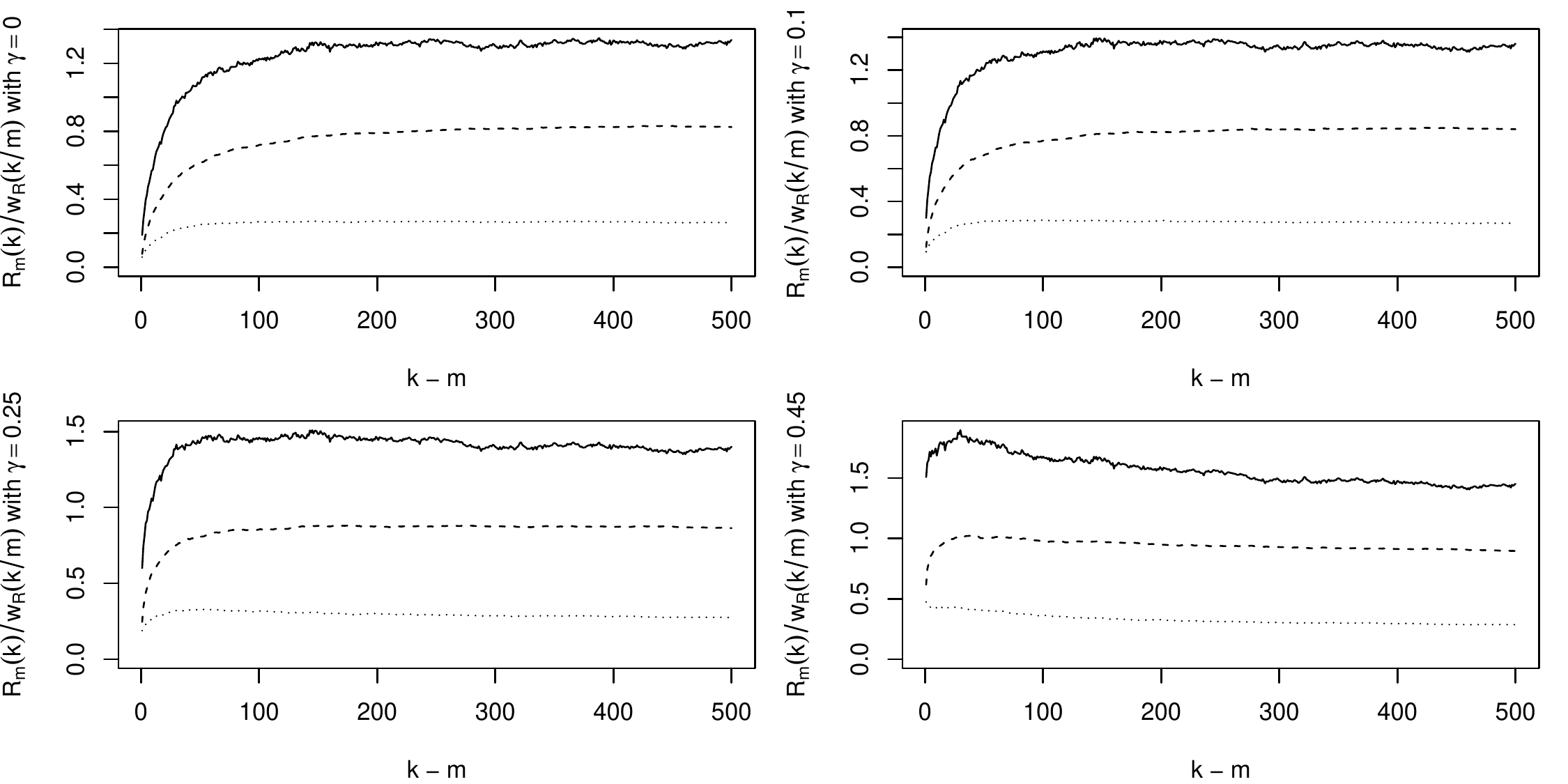}
  \caption{\label{fig:R-gamma} The solid (resp.\ dashed, dotted) line represents the empirical 95\% quantile (resp.\ empirical standard deviation, sample mean) of $\{ w_R(k/m) \}^{-1} R_m(k)$ with $m=100$, $\eta = 0.001$ and $\gamma \in \{0, 0.1, 0.25, 0.45\}$ against $k - m$ computed from 1000 sample paths.}
\end{center}
\end{figure}

In addition to the detector $R_m$ in~\eqref{eq:Rm}, we shall also consider the detectors
\begin{align}
  \label{eq:Sm}
  S_m(k) &=  \frac{1}{m} \sum_{j=m}^{k-1} \frac{j (k-j)}{m^{3/2}}  | \bar X_{1:j} - \bar X_{j+1:k} |, \qquad k \geq m+1, \\
  \label{eq:Tm}
  T_m(k) &=  \sqrt{ \frac{1}{m}  \sum_{j=m}^{k-1} \left\{ \frac{j (k-j)}{m^{3/2}}  ( \bar X_{1:j} - \bar X_{j+1:k} ) \right\}^2 }, \qquad k \geq m+1,
\end{align}
with corresponding threshold functions
\begin{align}
  \label{eq:wS}
  w_S(t) &= t^{5/2 + \eta} w_\gamma(t) , \\
  \label{eq:wT}
  w_T(t) &= t^{2 + \eta} w_\gamma(t).
\end{align}
As one can see, $S_m$ and $T_m$ could be regarded as the $L_1$ and $L_2$ versions, respectively, of $R_m$ in~\eqref{eq:Rm}.

The parameters $\eta$ and $\gamma$ in~\eqref{eq:wS} and~\eqref{eq:wT} play the same role as in~\eqref{eq:wR}. For~$\eta$, this can be empirically verified from the second and third rows of graphs in Figure~\ref{fig:trajectories}. For~$\gamma$, plots similar to Figure~\ref{fig:R-gamma} for $S_m$ and $T_m$ reveal that values of $\gamma$ larger than 0.25 seem meaningful for these two detectors. Specifically, it seems possible to improve the finite-sample performance of the corresponding schemes at the beginning of the monitoring by taking $\gamma$ as large as $0.85$ for $S_m$ and as large as $0.45$ for $T_m$.

\section{Asymptotics of the procedures}
\label{sec:asym}

To study the asymptotic behavior of the three considered monitoring schemes under $H_0$ in~\eqref{eq:H0}, we follow \cite{HorHusKokSte04}, \cite{AueHorHusKok06}, \cite{Fre15} and \cite{GosKleDet20}, among others, and assume that the observations satisfy the following condition.

\begin{cond}
  \label{cond:H0}
  The data are a stretch from a stationary time series $(X_i)_{i \in \Z}$ such that $\sigma^2 = \sum_{i \in \Z} \Cov(X_0,X_i)$, the long-run variance of $(X_i)_{i \in \Z}$, is strictly positive and finite. Furthermore, for all $m \in \N$, there exists two independent standard Brownian motions $W_{m,1}$ and $W_{m,2}$ such that, for some $0 < \xi < 1/2$,
\begin{equation}
  \label{eq:sup:cond}
  \sup_{m+1 \leq k \leq \infty} \frac{1}{(k-m)^\xi} \left| \sum_{i=m+1}^{k} \{X_i - \Ex(X_1) \}  - \sigma W_{m,1}(k-m) \right| = O_\Pr(1)
\end{equation}
and
\begin{equation}
  \label{eq:cond}
\frac{1}{m^\xi} \left| \sum_{i=1}^{m} \{X_i - \Ex(X_1) \}  - \sigma W_{m,2}(m) \right| = O_\Pr(1).
\end{equation}
\end{cond}

As mentioned in Remark~2.6 of \cite{GosKleDet20}, the validity of the previous conditions is discussed in Section~2 of \cite{AueHor04} for different classes of time series including GARCH and strongly mixing processes.

\begin{remark}
In the prototypical situation in which $(X_i)_{i \in \Z}$ is a sequence of independent normal random variables with variance $\sigma^2$, there exists a probability space on which the above conditions are trivially satisfied with $O_\Pr(1)$ replaced with 0.  This is essentially just the statement that the increments of a Brownian motion are standard normal random variables.  To be more precise, let $(B_j)_{j\ge 0}$ be independent standard Brownian bridges that are independent of $(X_i)_{i \in \Z}$. Without loss of generality, we may assume that the $X_i$ are standard normal, and we can define $W_{1,m}$ by first specifying its values at integer times by $W_{1,m}(k-m)=\sum_{i=m+1}^{k}X_i$, $k \geq m$, and then interpolating between these values  using the bridges $(B_{m+j})_{j\ge 0}$:  for $t \in (k-m,k-m+1)$, set $W_{1,m}(t)=W_{1,m}(k-m)+B_k\{t-(k-m)\}+\{t-(k-m)\} \{W_{1,m}(k-m+1)-W_{1,m}(k-m)\}$ (it is an exercise to check that the resulting process $W_{1,m}$ is a standard Brownian motion).  Then, the term in absolute values in~\eqref{eq:sup:cond} is exactly 0 for each $k$ and $m$. Similarly, setting $W_{2,m}(j)=\sum_{i=1}^j X_i$ for $j\le m$ and interpolating with the bridges $(B_j)_{0 \leq j \leq m-1}$ makes~\eqref{eq:cond} hold with 0 in the absolute value for each $m$. Furthermore, by construction, $W_{1,m}$ and $W_{2,m}$ are independent.
\end{remark}

The following result is proven in Appendix~\ref{app:H0}.

\begin{thm}
\label{thm:H0}
Under $H_0$ in~\eqref{eq:H0} and Condition~\ref{cond:H0}, for any fixed $\eta > 0$, $\epsilon > 0$ and $\gamma \geq 0$,
\begin{align*}
\sigma^{-1} \sup_{m+1 \leq k < \infty} \{ w_R(k/m) \}^{-1}  R_m(k) &\leadsto \sup_{1 \leq s \leq t < \infty} \{ w_R(t) \}^{-1} |t W(s) - s W(t) |, \\
\sigma^{-1} \sup_{m+1 \leq k < \infty} \{ w_S(k/m) \}^{-1}  S_m(k) &\leadsto \sup_{t \in [1,\infty)} \{ w_S(t) \}^{-1} \int_1^t |t W(s) - s W(t) | \dd s, \\
\sigma^{-1} \sup_{m+1 \leq k < \infty} \{ w_T(k/m) \}^{-1}  T_m(k) &\leadsto \sup_{t \in [1,\infty)} \{ w_T(t) \}^{-1} \sqrt{ \int_1^t \{ t W(s) - s W(t) \}^2 \dd s },
\end{align*}
where the arrow~`$\leadsto$' denotes convergence in distribution, the detectors $R_m$, $S_m$ and $T_m$ are defined in~\eqref{eq:Rm},~\eqref{eq:Sm} and~\eqref{eq:Tm}, respectively, the threshold functions $w_R$, $w_S$ and $w_T$ are defined in~\eqref{eq:wR},~\eqref{eq:wS} and~\eqref{eq:wT}, respectively, and $W$ is a standard Brownian motion. In addition, all the limiting random variables are almost surely finite.
\end{thm}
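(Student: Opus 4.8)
The plan is to reduce all three detectors to a single array of centred partial sums, to replace that array by a Brownian functional via the strong approximation in Condition~\ref{cond:H0}, and then to establish the convergence first on compact time horizons and finally on $[1,\infty)$ by controlling the contribution of large~$k$ uniformly in~$m$. First I would set $S_\ell = \sum_{i=1}^{\ell}\{X_i - \Ex(X_1)\}$ with $S_0 = 0$ and observe that, for $k > m$ and $m \le j \le k-1$, $j(k-j)(\bar X_{1:j} - \bar X_{j+1:k}) = (k-j)S_j - j(S_k - S_j) = kS_j - jS_k$, the mean $\Ex(X_1)$ dropping out; hence $R_m(k) = m^{-3/2}\max_{m\le j\le k-1}|kS_j - jS_k|$, $S_m(k) = m^{-1}\sum_{j=m}^{k-1}m^{-3/2}|kS_j - jS_k|$ and $T_m(k) = \{m^{-1}\sum_{j=m}^{k-1}(m^{-3/2}(kS_j - jS_k))^2\}^{1/2}$, so that all three detectors are built from the single array $(m^{-3/2}(kS_j-jS_k))_{m\le j<k}$.

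Next I would build a single Brownian motion from the two appearing in Condition~\ref{cond:H0}: since $W_{m,1}$ and $W_{m,2}$ are independent standard Brownian motions, the process $B_m$ on $[0,\infty)$ given by $\sqrt m\,B_m(u) = W_{m,2}(mu)$ for $u\in[0,1]$ and $\sqrt m\,B_m(u) = W_{m,2}(m) + W_{m,1}(m(u-1))$ for $u\ge 1$ is again a standard Brownian motion, so in particular $\{B_m(u)\}_{u\ge0}\overset{d}{=}\{W(u)\}_{u\ge0}$. Writing $S_j = \sigma\sqrt m\,B_m(j/m) + \rho_{m,j}$ for $j\ge m$, combining~\eqref{eq:sup:cond} with~\eqref{eq:cond} (the latter only at $j = m$) gives $\sup_{j\ge m}j^{-\xi}|\rho_{m,j}| = O_\Pr(1)$. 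Substituting this into the expressions above, putting $t = k/m$, $s = j/m$ and using $w_R(t)\ge\epsilon\,t^{3/2+\eta}$, one sees that $\sigma^{-1}\{w_R(k/m)\}^{-1}R_m(k)$ differs from $\{w_R(t)\}^{-1}\max_{m\le j\le k-1}|tB_m(s) - sB_m(t)|$ by at most a constant times $m^{\xi-1/2}t^{\xi-1/2-\eta}$; since $\xi<1/2$ and $\eta>0$ the power of~$t$ is negative, so this is $O_\Pr(m^{\xi-1/2})$ uniformly in $k\ge m+1$, hence $o_\Pr(1)$, and the $\epsilon$-floor in $w_\gamma$ (see~\eqref{eq:w:gamma}) rules out any blow-up as $t\downarrow1$. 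The analogue holds for $S_m$ and $T_m$, with $\max_{m\le j\le k-1}$ replaced by $m^{-1}\sum_{j=m}^{k-1}$, resp.\ its $L_2$ version, and $w_S$, $w_T$ in place of $w_R$; there one additionally approximates, on compact $t$-intervals, the Riemann sum $m^{-1}\sum_{j=m}^{k-1}f(j/m)$ by $\int_1^t f(s)\,\dd s$ at a cost governed by the modulus of continuity of $B_m$ at scale $1/m$.

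I would then establish the convergence on a fixed compact horizon. For $T>1$, the functional sending $w\in C[0,T]$ to $\sup_{1\le s\le t\le T}\{w_R(t)\}^{-1}|tw(s) - sw(t)|$, together with its grid approximations, is continuous ($w_R$ being continuous and bounded away from~$0$ on $[1,T]$), so, since $B_m\overset{d}{=}W$ and $W$ is a.s.\ uniformly continuous on $[0,T]$, the preceding step yields $\sigma^{-1}\sup_{m+1\le k\le\ip{mT}}\{w_R(k/m)\}^{-1}R_m(k)\leadsto\sup_{1\le s\le t\le T}\{w_R(t)\}^{-1}|tW(s)-sW(t)|$, and likewise for $S_m$ and $T_m$. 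To let $T\to\infty$ I would show the tails are uniformly negligible: using $|tB_m(s)-sB_m(t)|\le 2t\sup_{0\le u\le t}|B_m(u)|$ with $w_R(t)\ge\epsilon\,t^{3/2+\eta}$ (and, for $S_m$ and $T_m$, additionally bounding the integral over $[1,t]$ by $(t-1)$ times the supremum of the integrand, and using $(t-1)^{1/2}\le t^{1/2}$, $w_S(t)\ge\epsilon\,t^{5/2+\eta}$, $w_T(t)\ge\epsilon\,t^{2+\eta}$), the contribution of $\{k:k>\ip{mT}\}$ is in all three cases at most $2\epsilon^{-1}\sup_{t>T}t^{-1/2-\eta}\sup_{0\le u\le t}|B_m(u)|$; a dyadic decomposition and the reflection identity $\sup_{u\le2^{n+1}}|B_m(u)|\overset{d}{=}\sqrt{2^{n+1}}\sup_{u\le1}|W(u)|$ then bound $\Pr\{\sup_{2^n\le t\le2^{n+1}}t^{-1/2-\eta}\sup_{u\le t}|B_m(u)|>\delta\}$ by $\Pr\{\sup_{u\le1}|W(u)|>\delta\,2^{n\eta}/\sqrt2\}$, a series summable in~$n$ and free of~$m$, so the tail vanishes as $T\to\infty$ uniformly in~$m$. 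The same bound applied to $W$ shows the three limiting variables are a.s.\ finite (their suprema are essentially attained near $t=1$), and a standard ``convergence together'' argument upgrades the compact-horizon convergence to convergence on $[1,\infty)$.

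The main obstacle will be exactly this uniformity over the infinite horizon: one must show that large~$k$ contribute negligibly to the supremum \emph{uniformly in~$m$}, and this is where the threshold exponent $\eta>0$ is essential, as it produces the geometrically summable series $\sum_n\Pr\{\sup_{u\le1}|W(u)|>\delta\,2^{n\eta}/\sqrt2\}$. Dovetailing this truncation with the strong-approximation errors of~\eqref{eq:sup:cond}--\eqref{eq:cond} (uniformly $O_\Pr(m^{\xi-1/2})$ in~$k$ because $\xi<1/2$) and, for $S_m$ and $T_m$, with the Riemann-sum discretisation, is the part requiring the most care.
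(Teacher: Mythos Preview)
Your approach is correct and complete in outline, but it differs from the paper's proof in two notable respects.

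First, you glue $W_{m,1}$ and $W_{m,2}$ into a single Brownian motion $B_m\overset{d}{=}W$ at the outset, so that the limiting process $tW(s)-sW(t)$ appears directly. The paper instead keeps the two Brownian motions separate throughout, obtains the limit in the form $(t-s)W_2(1)+tW_1(s-1)-sW_1(t-1)$, and only at the very end identifies this with $tW(s)-sW(t)$ by checking equality of covariance functions of the two centred Gaussian fields. Your concatenation is more direct and avoids that final covariance computation.

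Second, and more substantively, your passage from the Brownian approximant to the limit is a truncate-and-tighten argument: convergence on $[1,T]$ via the continuous mapping theorem, then a dyadic tail bound $\sum_n\Pr\{\sup_{u\le 1}|W(u)|>\delta\,2^{n\eta}/\sqrt2\}<\infty$ to kill the contribution of $t>T$ uniformly in~$m$, followed by convergence together. The paper avoids truncation entirely: it proves (its Lemma~A.2) that the random function $R_\eta(s,t)=t^{-3/2-\eta}|(t-s)W_2(1)+tW_1(s-1)-sW_1(t-1)|$ is almost surely bounded \emph{and uniformly continuous} on the whole unbounded set $\{1\le s\le t<\infty\}$, using the law of the iterated logarithm, and then the convergence of the grid supremum $\sup_{s,t}\{w_\gamma(\ip{mt}/m)\}^{-1}R_\eta(\ip{ms}/m,\ip{mt}/m)$ to $\sup_{s,t}\{w_\gamma(t)\}^{-1}R_\eta(s,t)$ follows in one stroke from this global uniform continuity (the modulus-of-continuity bound~\eqref{eq:modulus:cont}). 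Your route is the more ``standard'' tightness argument and uses only Gaussian tail bounds; the paper's is shorter once the uniform-continuity lemma is in hand, and that lemma simultaneously delivers almost-sure finiteness of the limit. Both reach the same conclusion; the trade-off is between an explicit truncation with summable tail probabilities (yours) and a single pathwise uniform-continuity lemma on an unbounded domain (theirs).
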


Imposing that $\eta$ is strictly positive in the previous theorem is necessary for ensuring that the limiting random variables are almost surely finite. Recall the definition of the function $w_\gamma$ in~\eqref{eq:w:gamma}. Since the function $t \mapsto 1/w_\gamma(t)$ is bounded from below by $1$, the following result, proven in Appendix~\ref{app:H0}, implies that for the monitoring scheme based on $R_m$ and $w_R$, this condition is necessary and sufficient.

\begin{prop}
  \label{prop:R0:infinite}
For any fixed $M > 0$,
\begin{equation*}
  %\label{eq:R0:infinite}
  \Pr \left\{ \sup_{1 \leq s \leq t < \infty} \frac{1}{t^{3/2}} | t W(s) - s W(t) | \geq M \right\} = 1,
\end{equation*}
where $W$ is a standard Brownian motion.
\end{prop}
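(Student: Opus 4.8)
The plan is to show the stronger statement that, almost surely,
\[
\sup_{1\le s\le t<\infty}\frac{1}{t^{3/2}}\,|tW(s)-sW(t)|=+\infty,
\]
which clearly implies that the probability in the claim equals $1$ for every fixed $M>0$. The idea is to produce, on an event of probability one, an explicit sequence of admissible pairs $(s_k,t_k)$ with $s_k\to\infty$ along which the normalised quantities $t_k^{-3/2}|t_kW(s_k)-s_kW(t_k)|$ blow up. The driving input is the law of the iterated logarithm (LIL) for $W$, from which I would extract two facts holding simultaneously on a full-measure event: first, there is a (random) sequence $s_1<s_2<\cdots\to\infty$ with $|W(s_k)|\ge\sqrt{s_k\log\log s_k}$ (because $\limsup_{u\to\infty}W(u)/\sqrt{2u\log\log u}=1>1/\sqrt2$); second, there is a (random) $U_0<\infty$ with $|W(u)|\le 2\sqrt{u\log\log u}$ for all $u\ge U_0$.

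Next I would fix a constant $\beta$ large enough that $\beta-1>4\sqrt{2\beta}$, and for each $k$ with $s_k\ge U_0$ set $t_k:=\beta s_k$, so that $1\le s_k\le t_k<\infty$ for all large $k$. Writing $t_kW(s_k)-s_kW(t_k)=(t_k-s_k)W(s_k)-s_k\{W(t_k)-W(s_k)\}$ and applying the reverse triangle inequality gives
\[
\bigl|t_kW(s_k)-s_kW(t_k)\bigr|\;\ge\;(\beta-1)s_k\,|W(s_k)|\;-\;s_k\bigl(|W(t_k)|+|W(s_k)|\bigr).
\]
Bounding $|W(s_k)|$ below by the first LIL fact and $|W(t_k)|,|W(s_k)|$ above by the second (and using $\log\log(\beta s_k)\le 2\log\log s_k$ for $s_k$ large), the right-hand side is at least $(\beta-1-4\sqrt{2\beta})\,s_k^{3/2}\sqrt{\log\log s_k}$; dividing by $t_k^{3/2}=\beta^{3/2}s_k^{3/2}$ yields
\[
\frac{1}{t_k^{3/2}}\,\bigl|t_kW(s_k)-s_kW(t_k)\bigr|\;\ge\;\frac{\beta-1-4\sqrt{2\beta}}{\beta^{3/2}}\,\sqrt{\log\log s_k}\;\xrightarrow[k\to\infty]{}\;+\infty,
\]
so the supremum is a.s.\ infinite.

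The delicate point is the estimate of the correction term $s_k\{W(t_k)-W(s_k)\}$: the increment of $W$ over the window $[s_k,t_k]$, whose length is of order $s_k$, cannot be controlled uniformly over the random times $s_k$ by anything smaller than order $\sqrt{s_k\log\log s_k}$, that is, it is of the same order as the main term $(t_k-s_k)|W(s_k)|$ after division by $t_k^{3/2}$. The argument therefore closes only because enlarging $\beta$ helps the main term (coefficient $\beta-1$) more than it hurts the crude bound on the correction (coefficient $4\sqrt{2\beta}$). An alternative, LIL-free route would be to observe via Brownian scaling that the displayed supremum equals, almost surely, $\sup_{0<s\le t<\infty}t^{-3/2}|tW(s)-sW(t)|$, that the latter is measurable with respect to the germ $\sigma$-field $\mathcal{F}_{0+}$ and hence almost surely constant by Blumenthal's zero--one law, and that this constant must be $+\infty$ since the probability that $t^{-3/2}|tW(s)-sW(t)|\ge M$ is already strictly positive for the single choice $s=1,\ t=2$.
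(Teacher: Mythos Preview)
Your main argument via the law of the iterated logarithm is correct. The two LIL facts do hold simultaneously on a full-measure event, the choice $t_k=\beta s_k$ with $\beta$ large enough to ensure $\beta-1>4\sqrt{2\beta}$ makes the main term dominate the crude bound $s_k(|W(t_k)|+|W(s_k)|)$ on the correction, and the resulting lower bound $c_\beta\sqrt{\log\log s_k}$ diverges as required.

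This route is genuinely different from the paper's. The paper evaluates at the deterministic dyadic pairs $(s,t)=(2^{k-1},2^k)$, for which $t^{-3/2}|tW(s)-sW(t)|=2^{-k/2-1}\,|W(2^{k-1})-\{W(2^k)-W(2^{k-1})\}|$, and then runs a conditional second-Borel--Cantelli argument: since the increment $W(2^k)-W(2^{k-1})$ is independent of the past and symmetric, one gets a uniform positive lower bound on $\Pr\bigl(D_k\mid\bigcap_{j<k}D_j^{\mathrm c}\bigr)$ for $D_k=\{|W(2^{k-1})-\{W(2^k)-W(2^{k-1})\}|\ge M2^{k/2+1}\}$, whence $\Pr(\bigcap_k D_k^{\mathrm c})=0$. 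The paper's approach is thus more elementary---it uses only independence and Gaussianity of increments, not the LIL---while yours yields the slightly sharper conclusion that the normalised quantities actually diverge along a sequence at rate $\sqrt{\log\log s_k}$.

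Your alternative route via Blumenthal's zero--one law, however, has a gap: the random variable $\sup_{0<s\le t<\infty}t^{-3/2}|tW(s)-sW(t)|$ is not $\mathcal F_{0+}$-measurable, since it depends on $W(t)$ for arbitrarily large $t$. The scaling argument does give $\sup_{1\le s\le t}=\sup_{0<s\le t}$ almost surely (both are equal in distribution and the latter dominates the former), but that alone does not place the supremum in the germ $\sigma$-field. One could try to salvage a zero--one law by arguing that the event $\{\sup=\infty\}$ is scale-invariant or tail-measurable, but this requires additional work not indicated in your sketch.
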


\begin{remark}
We thus have that $\sup_{1 \leq s \leq t < \infty} \{ w_R(t) \}^{-1} |t W(s) - s W(t) |$ is almost surely finite for $\eta > 0$ and infinite for $\eta=0$. By the law of the iterated logarithm for Brownian motion, this supremum remains finite if $t^\eta$ in $w_R$ in~\eqref{eq:wR} is replaced by $h(t)$, where $h(t) = \sqrt{\log \log t}$ when $t>e^e$ and $h(t)=1$ when $t\le e^e$. We expect that Theorem~\ref{thm:H0} remains valid with such a modification which could be considered optimal in the sense that, as $t\to \infty$, $h$ diverges slower to infinity than $t \mapsto t^\eta$ for any $\eta > 0$. The latter implies that the use of $h(t)$ instead of $t^\eta$ entails the lowest possible variability reduction for the monitoring scheme (in the sense of the  discussion on the role of $\eta$ in the previous section) in the limit as $k \to \infty$. %To confirm the validity of Theorem~\ref{thm:H0} with such a modification, one would need to obtain a proof of the uniform continuity of the function $(s,t) \mapsto \{ w_R(t) \}^{-1} |t W(s) - s W(t) |$ on $\{(s,t) \in [1,\infty)^2 : s \leq t \}$ in this case to replace Lemma~\ref{lem:unif:cont:R:eta}. 
\end{remark}

The next corollary provides an operational version of Theorem~\ref{thm:H0} to carry out the three sequential change-point detection tests in practice.

\begin{cor}
  \label{cor:H0}
  The statement of Theorem~\ref{thm:H0} remains true if the long-run variance $\sigma^2$ is replaced by an estimator $\sigma_m^2$ of $\sigma^2$ computed from the learning sample $X_1,\dots,X_m$ such that $\sigma_m^2 - \sigma^2 = o_\Pr(1)$.
\end{cor}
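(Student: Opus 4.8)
\section*{Proof proposal for Corollary~\ref{cor:H0}}

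The plan is to exploit the fact that, in the conclusion of Theorem~\ref{thm:H0}, the long-run variance $\sigma$ enters only as a deterministic multiplicative constant in front of an already-tight sequence of random variables, whereas the detectors $R_m$, $S_m$ and $T_m$ themselves do not involve $\sigma$ at all. Replacing $\sigma$ by $\sigma_m$ therefore merely multiplies a convergent-in-distribution sequence by the ratio $\sigma/\sigma_m$, and the corollary will follow from a routine Slutsky-type argument once this ratio is shown to converge to~$1$ in probability.

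Concretely, I would proceed as follows. First, from $\sigma_m^2 - \sigma^2 = o_\Pr(1)$ and $\sigma^2 > 0$ one has $\Pr(\sigma_m^2 > 0) \to 1$, so the nonnegative square root $\sigma_m$ is eventually well-defined, and the continuous mapping theorem gives $\sigma_m \p \sigma$ and hence $\sigma/\sigma_m \p 1$. Next, for the $R_m$ statement, write
$$
\sigma_m^{-1} \sup_{m+1 \leq k < \infty} \{ w_R(k/m) \}^{-1}  R_m(k)
= \frac{\sigma}{\sigma_m} \cdot \left\{ \sigma^{-1} \sup_{m+1 \leq k < \infty} \{ w_R(k/m) \}^{-1}  R_m(k) \right\}.
$$
By Theorem~\ref{thm:H0}, the term in braces converges in distribution to $\sup_{1 \leq s \leq t < \infty} \{ w_R(t) \}^{-1} |t W(s) - s W(t)|$, which is almost surely finite and therefore $O_\Pr(1)$. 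Since $\sigma/\sigma_m$ converges in probability to the constant~$1$, the pair consisting of $\sigma/\sigma_m$ and the bracketed term converges jointly in distribution to the pair $\bigl(1,\ \sup_{1 \leq s \leq t < \infty} \{ w_R(t) \}^{-1} |t W(s) - s W(t)|\bigr)$; applying the continuous map $(a,b) \mapsto ab$ together with the continuous mapping theorem (equivalently, Slutsky's lemma) yields the asserted convergence with $\sigma_m$ in place of $\sigma$. The arguments for the $S_m$ and $T_m$ statements are identical, only the threshold function and the functional of $t W(s) - s W(t)$ appearing in the limit being changed; and since the limiting random variables coincide with those of Theorem~\ref{thm:H0}, their almost sure finiteness is inherited at once.

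There is essentially no substantive obstacle here: the proof is a perturbation argument built entirely on Theorem~\ref{thm:H0}. The only points deserving a word of care are the well-definedness of the ratio $\sigma/\sigma_m$, dealt with by restricting to the event $\{\sigma_m > 0\}$, whose probability tends to one, and the observation that joint convergence in distribution of $(\sigma/\sigma_m,\,\cdot\,)$ is automatic because the first coordinate converges in probability to a constant.
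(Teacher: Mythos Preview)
Your proposal is correct and is exactly the standard Slutsky-type argument one would expect; the paper in fact does not spell out a proof of this corollary at all, treating it as an immediate consequence of Theorem~\ref{thm:H0}, so your argument supplies precisely the justification the authors left implicit.
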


To fix ideas, let us briefly explain how Corollary~\ref{cor:H0} can be used to carry out the sequential test based on $R_m$ in~\eqref{eq:Rm}. Given a significance level $\alpha \in (0,1/2)$, it is first necessary to accurately estimate $q_{R, 1-\alpha}$, the $(1-\alpha)$-quantile of the continuous random variable $\sup_{1 \leq s \leq t < \infty} \{ w_R(t) \}^{-1} |t W(s) - s W(t) |$ (this aspect will be discussed in detail in Section~\ref{sec:quantiles}). Then,  under $H_0$ in~\eqref{eq:H0}, from the Portmanteau theorem,  
\begin{multline}
  \label{eq:test}
  \Pr \left\{  \sigma_m^{-1} \sup_{m+1 \leq k < \infty} \{ w_R(k/m) \}^{-1}  R_m(k) \leq q_{R, 1-\alpha} \right\} \\ \to \Pr \left\{  \sup_{1 \leq s \leq t < \infty} \{ w_R(t) \}^{-1} |t W(s) - s W(t) | \leq q_{R, 1-\alpha} \right\} = 1 - \alpha.
\end{multline}
Hence, for large $m$, we can expect that, under  $H_0$, 
$$
\Pr\{   R_m(k) \leq \sigma_m q_{R, 1-\alpha} w_R(k/m) \text{ for all } k \geq m+1\}  \approx 1 - \alpha.
$$
In practice, after the arrival of observation $X_k$, $k > m$, $R_m(k)$ is computed from $X_1,\dots,X_k$ and compared to the threshold $\sigma_m q_{R, 1-\alpha} w_R(k/m)$ (or, equivalently, $\sigma_m^{-1} \{ w_R(k/m) \}^{-1}  R_m(k)$ is computed and compared to the threshold $q_{R, 1-\alpha}$). If $R_m(k) > \sigma_m q_{R, 1-\alpha} w_R(k/m)$, the null hypothesis is rejected and the monitoring stops. Otherwise, the next observation is collected and the previous iteration is carried out with the $k+1$ available data points. Corollary~\ref{cor:H0} and~\eqref{eq:test} in particular guarantee that such a sequential testing procedure will have asymptotic level~$\alpha$. Steps to carry out the tests based on $S_m$ in~\eqref{eq:Sm} and $T_m$ in~\eqref{eq:Tm} can be obtained \emph{mutatis mutandis}.

  We now turn to the asymptotic behavior of the monitoring schemes under sequences of alternatives related to $H_1$ in~\eqref{eq:H1}. We start with the procedure based on $R_m$ in~\eqref{eq:Rm} which we study under a condition similar to the one used by \citet[Theorem~2.13]{GosKleDet20}.

\begin{cond}
\label{cond:H1:R}
The data are a stretch, for some $m \in \N$, from the sequence of random variables $(X^{\sm}_i)_{i \in \N}$ defined by 
\begin{align*}
  %\label{eq:def:seq}
  X^{\sm}_i=\begin{cases}
    Y^{\sz}_i, \text{ if }i\le k^\star_m,\\
    Y^{\sm}_i, \text{ otherwise},
  \end{cases}
\end{align*}
where $(k^\star_m)_{m \in \N}$ is a sequence of integers such that $k^\star_m \geq m$ and $(Y^{\sz}_i)_{i \in \Z}$, $(Y^{\sss{1}}_i)_{i \in \Z}$, \dots, $(Y^{\sm}_i)_{i \in \Z},\dots$ are sequences of random variables defined on the same probability space and satisfying
\begin{enumerate}[(i)]
\item for each $m \geq 0$, $\big(\Ex(Y^{\sm}_i)\big)_{i \in \Z}$ is a constant sequence,
\item $\sqrt{m} |\Ex(Y^{\sz}_1)-\Ex(Y^{\sm}_1)| \to \infty$,
\item 
\begin{equation}
\label{eq:before}
\frac{1}{\sqrt{k^\star_m}} \sum_{i=1}^{k^\star_m} \{ Y_i^{\sz} - \Ex(Y_1^{\sz}) \} = O_\Pr(1),
\end{equation}
\end{enumerate}
and either 
\begin{enumerate}
\item[(iv)] there exist constants $c>0$ and $C_1>c_1>0$ such that $c_1\le m^{-1}k^\star_m\le C_1$ for all $m \in \N$ and  
\begin{equation}
  \label{eq:after1}
  \frac{1}{\sqrt{m}} \sum_{i=k^\star_m+1}^{k^\star_m + \ip{cm}} \{ Y_i^{\sm} - \Ex(Y^{\sm}_1) \} = O_\Pr(1),
\end{equation}
\end{enumerate}
or
\begin{enumerate}
\item[(v)] $k^\star_m/m \to \infty$, and there exists a constant $c>0$ such that 
\begin{equation}
  \label{eq:after2}
  \frac{1}{\sqrt{k^\star_m}} \sum_{i=k^\star_m+1}^{k^\star_m + \ip{ck^\star_m}} \{ Y_i^{\sm} - \Ex(Y^{\sm}_1) \} = O_\Pr(1).
\end{equation}
\end{enumerate}
\end{cond}

\begin{remark}
Statement $(iv)$ (resp.\ $(v)$) in Condition~\ref{cond:H1:R} is related to what were called \emph{early} (resp.\ \emph{late}) changes in~\cite{GosKleDet20} because $k^\star_m/m = O(1)$ (resp.\ $k^\star_m/m \to \infty$).
\end{remark}

\begin{remark}
Suppose that $(Y^{\sz}_i)_{i \in \Z}$ is a stationary centered sequence for which the central limit theorem holds, $(a_m)_{m \in \N}$ is a sequence such that $\sqrt{m} a_m \to \infty$ and, for any $m \in \N$, $Y^{\sm}_i = Y^{\sz}_i+a_m$, $i \in \Z$. It is then easy to verify that the sequences $(Y^{\sm}_i)_{i \in \Z}$, $m \geq 0$, satisfy $(i)$ and $(ii)$ in Condition~\ref{cond:H1:R}. Let us verify that they also satisfy~$(iii)$, $(iv)$ and~$(v)$. Let $\eps>0$.  Since the central limit theorem holds for $(Y^{\sz}_i)_{i \in \Z}$, there exists $M>0$ such that 
\begin{equation}
  \label{eq:unif:tightness}
\sup_{n\in \N} \Pr\Big( n^{-1/2} \sum_{i=1}^n Y^{\sz}_i>M \Big) < \eps.
\end{equation}
Hence,~\eqref{eq:before} holds for any sequence $(k^\star_m)_{m \in \N}$. For $c=1$, the quantity on the left-hand side of~\eqref{eq:after1} is equal to 
$$
\frac{1}{\sqrt{m}} \sum_{i=k^\star_m+1}^{k^\star_m + m}  Y_i^{\sz}.
$$
Since $\Pr(m^{-1/2}\sum_{i=1}^m Y^{\sz}_i > M) = \Pr(m^{-1/2}\sum_{i=n+1}^{n+m} Y_i^{\sz} > M)$ for all $m,n \in \N$,~\eqref{eq:unif:tightness} implies that $\sup_{n \in \N}\sup_{m \in \N} \Pr(m^{-1/2}\sum_{i=n+1}^{n+m} Y_i^{\sz}>M)<\eps$  which in turn implies that~\eqref{eq:after1} holds with $c=1$ for any sequence $(k^\star_m)_{m \in \N}$. Similarly, for $c=1$, the quantity on the left hand side of~\eqref{eq:after2} is equal to 
$$
\frac{1}{\sqrt{k^\star_m}} \sum_{i=k^\star_m+1}^{2k^\star_m}  Y_i^{\sz},
$$
and since $\sup_{n \in \N}\Pr(n^{-1/2}\sum_{i=n+1}^{2n} Y_i^{\sz}>M)<\eps$,~\eqref{eq:after2} holds with $c=1$ for any sequence $(k^\star_m)_{m \in \N}$.
\end{remark}

The following result proven in Appendix~\ref{app:H1} establishes the consistency of the procedure based on $R_m$ under Condition~\ref{cond:H1:R}.

\begin{thm}
  \label{thm:H1:R}
  Under Condition~\ref{cond:H1:R}, for any fixed $\eta \geq 0$, $\epsilon \geq 0$ and $\gamma \geq 0$,
  $$
  \sup_{m+1 \leq k < \infty} \{ w_R(k/m) \}^{-1}  R_m(k) \p \infty,
  $$
  where the arrow~`~$\p$' denotes convergence in probability.
\end{thm}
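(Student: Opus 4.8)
The plan is to exhibit, for each $m$, a single well-chosen index $j = j_m \in \{m, \dots, k^\star_m\}$ (most naturally $j_m = k^\star_m$ itself, or $j_m$ equal to roughly $m$ in the late-change case, depending on which of $(iv)$ or $(v)$ holds) together with a companion index $k = k_m > j_m$, and to show that the single term
\[
\frac{j_m(k_m - j_m)}{m^{3/2}} \, \bigl| \bar X_{1:j_m} - \bar X_{j_m+1:k_m} \bigr|
\]
already diverges in probability after division by $w_R(k_m/m)$. Since $R_m(k_m)$ is a maximum over $j$ that includes this one term, and since $\sup_{m+1 \le k < \infty} \{w_R(k/m)\}^{-1} R_m(k) \ge \{w_R(k_m/m)\}^{-1} R_m(k_m)$, this lower bound suffices. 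Note that here we are allowed to take $\eta = 0$, $\epsilon = 0$ and $\gamma = 0$, so $w_R(t) = t^{3/2}$ (in fact $w_R(t) \le t^{3/2+\eta}$ always for $\eta \ge 0$ and $t \ge 1$, and the $w_\gamma$ factor is bounded, so it is enough to lower bound the detector against a multiple of $(k_m/m)^{3/2+\eta}$ and then absorb constants); this is the reverse of the situation in Theorem~\ref{thm:H0} and makes the argument cleaner.

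The key algebraic step is to decompose $\bar X_{1:j_m} - \bar X_{j_m+1:k_m}$ into a deterministic mean-shift part and a stochastic fluctuation part. Writing $\mu_0 = \Ex(Y_1^{\sz})$ and $\mu_m = \Ex(Y_1^{\sm})$ and $\Delta_m = \mu_0 - \mu_m$, when $j_m = k^\star_m$ we have $\bar X_{1:j_m}$ built entirely from $Y^{\sz}$ variables and $\bar X_{j_m+1:k_m}$ built entirely from $Y^{\sm}$ variables, so
\[
\bar X_{1:k^\star_m} - \bar X_{k^\star_m+1:k_m} = \Delta_m + \frac{1}{k^\star_m}\sum_{i=1}^{k^\star_m}\{Y_i^{\sz} - \mu_0\} - \frac{1}{k_m - k^\star_m}\sum_{i=k^\star_m+1}^{k_m}\{Y_i^{\sm} - \mu_m\}.
\]
By $(iii)$ the first sum is $O_\Pr(\sqrt{k^\star_m}/k^\star_m) = O_\Pr(1/\sqrt{k^\star_m})$, and if we choose $k_m - k^\star_m = \ip{cm}$ (early case) or $\ip{ck^\star_m}$ (late case), condition $(iv)$ or $(v)$ makes the second sum $O_\Pr(1/\sqrt{m})$ or $O_\Pr(1/\sqrt{k^\star_m})$ respectively; in both regimes these fluctuation terms are $o_\Pr(1)$ relative to $|\Delta_m|$, because $\sqrt m |\Delta_m| \to \infty$ by $(ii)$ together with the growth constraints on $k^\star_m/m$. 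Hence $|\bar X_{1:k^\star_m} - \bar X_{k^\star_m+1:k_m}| \ge \tfrac12 |\Delta_m|$ with probability tending to one. It then remains to check that the prefactor, divided by $w_R(k_m/m)$, times $|\Delta_m|$, diverges: up to constants this prefactor over threshold is of order
\[
\frac{k^\star_m (k_m - k^\star_m)}{m^{3/2}} \cdot \Bigl(\frac{m}{k_m}\Bigr)^{3/2+\eta},
\]
and plugging in the chosen $k_m - k^\star_m$ and using $c_1 \le k^\star_m/m \le C_1$ in case $(iv)$, resp.\ $k^\star_m/m \to \infty$ in case $(v)$, one finds this is bounded below by a positive constant times $\sqrt m$ (case $(iv)$) or by a diverging sequence (case $(v)$, where the $k^\star_m$ powers in numerator and denominator nearly cancel and leave a factor growing like $k^\star_m/m \to \infty$ up to the small $\eta$ correction). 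Multiplying by $|\Delta_m| \ge \tfrac12 |\Delta_m|$ and invoking $(ii)$ then yields divergence in probability.

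The main obstacle, and the part requiring the most care, is the bookkeeping across the two sub-cases $(iv)$ and $(v)$: the natural scaling window after the change ($\ip{cm}$ versus $\ip{ck^\star_m}$) differs, the available rate on the post-change partial sum differs, and the behaviour of the ratio $k^\star_m/m$ (bounded versus diverging) interacts with the $m^{-3/2}$ normalization and the $t^{3/2+\eta}$ threshold in opposite ways, so the final lower bound has to be assembled separately in each case. One must also be slightly careful that $k_m$ so constructed genuinely satisfies $k_m \ge m+1$ and that $j_m = k^\star_m$ lies in the admissible range $\{m, \dots, k_m - 1\}$, which follows from $k^\star_m \ge m$ and $c > 0$; and in case $(v)$ one should double-check that no subtlety arises from the fact that only the stretch $X_1^{\sm}, \dots, X_{k_m}^{\sm}$ is used, all of whose coordinates are indeed well-defined. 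Beyond this the argument is a routine application of Slutsky-type reasoning: a deterministic signal of order $|\Delta_m|$ amplified by a diverging deterministic prefactor, perturbed by stochastic terms that are $O_\Pr(1)$ on the relevant scale and hence asymptotically negligible.
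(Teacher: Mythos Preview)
Your proposal is correct and mirrors the paper's proof almost exactly: both take $j = k^\star_m$ together with $k = k^\star_m + \lfloor cm \rfloor$ under~(iv), respectively $k = k^\star_m + \lfloor c k^\star_m \rfloor$ under~(v), decompose $\bar X_{1:k^\star_m} - \bar X_{k^\star_m+1:k}$ into the signal $\Delta_m$ plus fluctuations controlled by~(iii) and~(iv)/(v), and then invoke~(ii). One minor correction to your case-(v) bookkeeping: the prefactor-over-threshold works out to be of order $m^\eta (k^\star_m)^{1/2-\eta} = m^{1/2}(k^\star_m/m)^{1/2-\eta}$, not $k^\star_m/m$; since $k^\star_m \ge m$ this is at least of order $m^{1/2}$, which is precisely what lets you multiply by $|\Delta_m|$ and invoke~(ii), and the paper makes this lower bound explicit.
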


Assume that, under Condition~\ref{cond:H1:R}, the estimator $\sigma_m^2$ appearing in Corollary~\ref{cor:H0} converges in probability to a strictly positive constant. An immediate corollary of the previous theorem is then that, under Condition~\ref{cond:H1:R}, for any fixed $\eta \geq 0$, $\epsilon \geq 0$ and $\gamma \geq 0$, $\sigma_m^{-1} \sup_{m+1 \leq k < \infty} \{ w_R(k/m) \}^{-1}  R_m(k)$ diverges in probability to infinity. In other words, under the considered conditions, for any fixed $\eta \geq 0$, $\epsilon \geq 0$ and $\gamma \geq 0$, and any threshold $\kappa > 0$,
$$
\Pr\left[ \sigma_m^{-1} \sup_{m+1 \leq k < \infty} \{ w_R(k/m) \}^{-1}  R_m(k) > \kappa\right] \to 1.
$$

Proving the consistency of the procedures based on $S_m$ in~\eqref{eq:Sm} and $T_m$ in~\eqref{eq:Tm} for late changes turns out to be more difficult. The following result, proven in Appendix~\ref{app:H1}, shows that they are consistent for early changes. Note that the considered condition is very similar to those considered for instance in \citet[Theorem~3.8]{DetGos19} or \citet[Proposition~2.7]{KojVer20a} in the context of closed-end monitoring.

\begin{cond}
\label{cond:H1:ST}
The data are a stretch, for some $m \in \N$, from the sequence of random variables $(X^{\sm}_i)_{i \in \N}$ defined by 
\begin{align*}
  X^{\sm}_i=\begin{cases}
    Y_i, \text{ if }i\le k^\star_m,\\
    Z_i, \text{ otherwise},
  \end{cases}
\end{align*}
where $k^\star_m = \ip{mc}$ for some constant $c \geq  1$, and $(Y_i)_{i \in \Z}$ and $(Z_i)_{i \in \Z}$ are stationary sequences defined on the same probability space such that $\Ex(Y_1) \neq \Ex(Z_1)$ and for which the functional central limit theorem holds. 
\end{cond}

\begin{thm}
  \label{thm:H1:ST}
  Under Condition~\ref{cond:H1:ST}, for any fixed $T > c$, $\{ m^{-1/2} H_m(s,t) \}_{1 \leq s \leq t \leq T}$ converges in probability to $\{K_c(s,t)\}_{1 \leq s \leq t \leq T}$,  where, for any $1 \leq s \leq t$, 
\begin{align*}
 % \label{eq:Hbm}
  H_m(s,t) &= m^{-3/2} \ip{ms} (\ip{mt} - \ip{ms}) ( \bar X^{\sm}_{1:\ip{ms}} - \bar X^{\sm}_{\ip{ms}+1:\ip{mt}}), \\
 % \label{eq:Kc}
  K_c(s,t) &= (s \wedge c) \{(t \vee c) - (s \vee c) \} \{\Ex(X^{\sm}_{k^\star}) - \Ex(X^{\sm}_{k^\star+1})\},
\end{align*}
and $\vee$ and $\wedge$ are the maximum and minimum operators, respectively. Consequently, for any fixed $\eta \geq 0$, $\epsilon \geq 0$ and $\gamma \geq 0$,
\begin{equation}
  \label{eq:H1:ST}
  \sup_{m+1 \leq k < \infty} \{ w_S(k/m) \}^{-1}  S_m(k) \p \infty  \quad \text{ and } \quad \sup_{m+1 \leq k < \infty} \{ w_T(k/m) \}^{-1}  T_m(k) \p \infty.
\end{equation}
\end{thm}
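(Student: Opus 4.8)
The plan is first to show that the rescaled two‑sample process $m^{-1/2}H_m$ has a deterministic limit, by reducing it to partial sums of the observations, and then to deduce the two divergence statements in~\eqref{eq:H1:ST} by bounding each supremum below by the value of the corresponding detector at a single, conveniently chosen $k$ of order $m$.

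For the reduction, write $S_k^{\sm}=\sum_{i=1}^k X_i^{\sm}$ and use the elementary identity $j(k-j)(\bar X_{1:j}^{\sm}-\bar X_{j+1:k}^{\sm})=k S_j^{\sm}-j S_k^{\sm}$, valid for $1\le j\le k-1$, to get
\[
  m^{-1/2}H_m(s,t)=\frac{\ip{mt}}{m}\cdot\frac{S_{\ip{ms}}^{\sm}}{m}-\frac{\ip{ms}}{m}\cdot\frac{S_{\ip{mt}}^{\sm}}{m}.
\]
Everything then hinges on the uniform behaviour of $u\mapsto m^{-1}S_{\ip{mu}}^{\sm}$ on $[0,T]$. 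Since $X_i^{\sm}=Y_i$ for $i\le k_m^\star=\ip{mc}$ and $X_i^{\sm}=Z_i$ afterwards, one has $S_{\ip{mu}}^{\sm}=\sum_{i=1}^{\ip{mu}}Y_i$ when $u\le c$ and $S_{\ip{mu}}^{\sm}=\sum_{i=1}^{\ip{mc}}Y_i+\sum_{i=\ip{mc}+1}^{\ip{mu}}Z_i$ when $u>c$. The functional central limit theorem assumed for $(Y_i)$ and $(Z_i)$ gives $\sup_{0\le u\le T}|m^{-1/2}\sum_{i=1}^{\ip{mu}}\{Y_i-\Ex(Y_1)\}|=O_\Pr(1)$ and its analogue for $Z$; dividing by a further factor $m^{1/2}$ and using $\ip{mu}/m\to u$ and $\ip{mc}/m\to c$ uniformly yields
\[
  \sup_{0\le u\le T}\left|\frac{S_{\ip{mu}}^{\sm}}{m}-\mu(u)\right|\p 0,\qquad \mu(u)=\Ex(Y_1)(u\wedge c)+\Ex(Z_1)\{(u-c)\vee 0\}.
\]
Substituting into the previous display and using boundedness on the compact triangle $\{1\le s\le t\le T\}$ shows $m^{-1/2}H_m(s,t)\p t\mu(s)-s\mu(t)$ uniformly there, and a short case analysis on the relative positions of $s,t,c$ (namely $s\le t\le c$, $s\le c\le t$, $c\le s\le t$) verifies $t\mu(s)-s\mu(t)=K_c(s,t)$ in each case, giving the first assertion.

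For the consistency statements, fix any $t>c$ and set $k=\ip{mt}$, which exceeds $m$ for $m$ large. Evaluating the detectors at integer arguments, $S_m(\ip{mt})=m^{-1}\sum_{j=m}^{\ip{mt}-1}|H_m(j/m,\ip{mt}/m)|$ and $T_m(\ip{mt})^2=m^{-1}\sum_{j=m}^{\ip{mt}-1}H_m(j/m,\ip{mt}/m)^2$. Writing $|H_m(j/m,\cdot)|=m^{1/2}|m^{-1/2}H_m(j/m,\cdot)|$, invoking the uniform convergence just proved, and using convergence of Riemann sums over the grid $\{j/m:m\le j\le\ip{mt}-1\}$ of $[1,t]$, we obtain
\[
  \frac{S_m(\ip{mt})}{m^{1/2}}\p\int_1^t|K_c(s,t)|\,\dd s\quad\text{and}\quad\frac{T_m(\ip{mt})}{m^{1/2}}\p\sqrt{\int_1^t K_c(s,t)^2\,\dd s},
\]
with both limits strictly positive because $\Ex(Y_1)\neq\Ex(Z_1)$ and $t>c$ force $s\mapsto K_c(s,t)$ to be nonzero on a subinterval of $[1,t]$. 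Since $w_S$ and $w_T$ are continuous and strictly positive on $(1,\infty)$, $w_S(\ip{mt}/m)\to w_S(t)\in(0,\infty)$ and $w_T(\ip{mt}/m)\to w_T(t)\in(0,\infty)$, so $\{w_S(\ip{mt}/m)\}^{-1}S_m(\ip{mt})$ and $\{w_T(\ip{mt}/m)\}^{-1}T_m(\ip{mt})$ both diverge to $\infty$ in probability; bounding the suprema in~\eqref{eq:H1:ST} below by these single terms gives the claim for every $\eta\ge 0$, $\epsilon\ge 0$ and $\gamma\ge 0$.

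The only genuinely technical point is the uniform‑in‑$(s,t)$ convergence in the second paragraph: one must combine the tightness of the partial‑sum processes of $Y$ and $Z$ (supplied by the functional central limit theorem) with the piecewise‑stationary structure of $(X_i^{\sm})$ at $k_m^\star=\ip{mc}$, and then promote convergence of one‑dimensional marginals to convergence uniform over the triangle; the remaining algebra and the Riemann‑sum step are routine. It is precisely the restriction to an \emph{early} change, $k_m^\star=\ip{mc}$ with $c$ fixed, that makes the limit function $\mu$—and hence the whole argument—available; for late changes the natural normalization degenerates, which is why Theorem~\ref{thm:H1:ST} treats early changes only.
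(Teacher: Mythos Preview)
Your proof is correct and follows a somewhat different route from the paper's. The paper handles the uniform convergence $m^{-1/2}H_m\to K_c$ by a direct three-case analysis of $J_m(s,t)=m^{-1/2}H_m(s,t)-K_c(s,t)$ according to the position of $(s,t)$ relative to $c$, using the difference processes $D_{m,Y}(s,t)=W_{m,Y}(t)-W_{m,Y}(s)$ and $D_{m,Z}(s,t)=W_{m,Z}(t)-W_{m,Z}(s)$ and bounding each case separately. You instead reduce the two-variable problem to a one-variable one via the partial-sum identity $m^{-1/2}H_m(s,t)=(\ip{mt}/m)(S_{\ip{ms}}^{\sm}/m)-(\ip{ms}/m)(S_{\ip{mt}}^{\sm}/m)$, establish the uniform weak law $S_{\ip{mu}}^{\sm}/m\to\mu(u)$ once, and then read off the limit of the bilinear form; the three-case split appears only in the elementary verification $t\mu(s)-s\mu(t)=K_c(s,t)$. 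This is cleaner and more modular. For the divergence in~\eqref{eq:H1:ST}, the paper bounds $\{w_S(k/m)\}^{-1}\ge (k/m)^{-5/2-\eta}$ using $w_\gamma\le 1$, takes the supremum over $t\in[1,T]$, and applies the continuous mapping theorem to the functional $f\mapsto\sup_t\int_1^t|f(s,t)|\,\dd s$; you instead fix a single $t>c$ and use continuity and positivity of $w_S,w_T$ at that $t$. Both work; the paper's bound is slightly more robust (no continuity of $w_\gamma$ needed), while yours avoids the functional continuous-mapping argument.
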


Since the estimator $\sigma_m^2$ appearing in Corollary~\ref{cor:H0} converges in probability to a strictly positive constant under Condition~\ref{cond:H1:ST}, an immediate corollary of Theorem~\ref{thm:H1:ST} is that, under the same conditions,
$$
\sigma_m^{-1} \sup_{m+1 \leq k < \infty} \{ w_S(k/m) \}^{-1}  S_m(k) \p \infty  \quad \text{ and } \quad \sigma_m^{-1} \sup_{m+1 \leq k < \infty} \{ w_T(k/m) \}^{-1}  T_m(k) \p \infty.
$$

\section{Estimation of high quantiles of the limiting distributions}
\label{sec:quantiles}

From the discussion following Corollary~\ref{cor:H0}, we know that accurate estimations of high quantiles of the limiting distributions appearing in Theorem~\ref{thm:H0} are necessary to carry out the sequential tests based on the detectors $R_m$, $S_m$ and $T_m$ defined in~\eqref{eq:Rm},~\eqref{eq:Sm} and~\eqref{eq:Tm}, respectively. Before we present the approach considered in this work, let us briefly explain how \cite{HorHusKokSte04} and \cite{GosKleDet20} proceeded for the procedure based on $Q_m$ in~\eqref{eq:Qm} and the one based on $E_m$ in~\eqref{eq:Em}, respectively. From the latter references, suitable threshold functions for these two detectors are $w_Q(t) = w_E(t) = t w_\gamma(t)$, $t \in [1,\infty)$, where the function $w_\gamma$ is defined in~\eqref{eq:wR} but with $\gamma$ restricted to the interval $[0,1/2)$. Furthermore, from \cite{GosKleDet20}, under $H_0$ in~\eqref{eq:H0} and Condition~\ref{cond:H0}, one has
\begin{align*}
  \sigma_m^{-1} \sup_{m+1 \leq k < \infty} \{ w_Q(k/m) \}^{-1}  Q_m(k) &\leadsto \sup_{t \in [0,1]} \frac{W(t)}{\max(t^\gamma, \epsilon)}, \\
  \sigma_m^{-1} \sup_{m+1 \leq k < \infty} \{ w_E(k/m) \}^{-1}  E_m(k) &\leadsto \sup_{0 \leq s \leq t \leq 1} \frac{1}{\max(t^\gamma, \epsilon)} |W(t) - W(s)|,
\end{align*}
where $W$ is a standard Brownian motion. Notice that, using the law of the iterated logarithm/local modulus of continuity for Brownian motion, it can be verified, since $\gamma < 1/2$, that the limiting random variables are almost surely finite even if $\epsilon$ is taken equal to zero. When $\gamma = 0$, it seems particularly natural to estimate high quantiles of these limiting distributions by Monte Carlo simulation using an approximation of $W$ on a fine grid of $[0,1]$ (one should keep in mind that although simulating such a process at discrete times always underestimates the true supremum, one can get arbitrarily close with probability as close to~1 as one wishes by taking a sufficiently fine grid). Note that in practice \cite{HorHusKokSte04} and \cite{GosKleDet20} use $\epsilon=0$ even when $\gamma > 0$. 

The following result, proven in Appendix~\ref{app:others}, suggests that the latter approach could be attempted for the first limiting distribution appearing in Theorem~\ref{thm:H0} related to the procedure based on $R_m$ in~\eqref{eq:Rm}.

\begin{prop}
  \label{prop:equiv:dist:R}
For any fixed $\eta > 0$, $\epsilon > 0$ and $\gamma \geq 0$, the random variable
\begin{equation}
  \label{eq:R:inf}
\sup_{1 \leq s \leq t < \infty} \{ w_R(t) \}^{-1} |t W(s) - s W(t) | = \sup_{1 \leq s \leq t < \infty} \frac{1}{t^{3/2+\eta} \max[ \{ (t-1)/t \}^\gamma, \epsilon]  } |t W(s) - s W(t) |
\end{equation}
and the random variable
\begin{equation}
  \label{eq:equiv:dist:R}
  \sup_{0<u \leq v\leq 1}\frac{u^{1/2+\eta}}{v\max\{(1-u)^\gamma,\epsilon\}}|W(v)-W(u)|
\end{equation}
are equal in distribution, where $W$ is a standard Brownian motion.
\end{prop}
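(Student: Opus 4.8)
The key device is the time-inversion property of Brownian motion: if $W$ is a standard Brownian motion, then the process $B$ given by $B(x) = x\,W(1/x)$ for $x>0$ and $B(0)=0$ is again a standard Brownian motion on $[0,\infty)$, the continuity at the origin being equivalent to $y^{-1}W(y)\to 0$ as $y\to\infty$ (the law of large numbers for Brownian motion). The plan is to turn the supremum in~\eqref{eq:R:inf} into the one in~\eqref{eq:equiv:dist:R}, with $B$ in place of $W$, by the change of variables $(s,t)\mapsto (u,v)=(1/t,\,1/s)$, and then invoke the fact that $B$ has the same law as $W$.

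First I would record that $(s,t)\mapsto(1/t,1/s)$ is a bijection from $\{(s,t):1\le s\le t<\infty\}$ onto $\{(u,v):0<u\le v\le 1\}$, the open ``endpoint'' $t=\infty$ corresponding to the open endpoint $u\to 0^+$. Writing $W(s)=s\,B(1/s)$ and $W(t)=t\,B(1/t)$ gives $tW(s)-sW(t)=st\{B(1/s)-B(1/t)\}$, while $(t-1)/t=1-1/t$, so that for $1\le s\le t<\infty$ and $(u,v)=(1/t,1/s)$,
\[
\frac{|tW(s)-sW(t)|}{t^{3/2+\eta}\max[\{(t-1)/t\}^\gamma,\epsilon]}
=\frac{s\,|B(1/s)-B(1/t)|}{t^{1/2+\eta}\max[(1-1/t)^\gamma,\epsilon]}
=\frac{u^{1/2+\eta}\,|B(v)-B(u)|}{v\,\max\{(1-u)^\gamma,\epsilon\}}.
\]
Since the supremum of a function over a set is unchanged when the set is reparametrised bijectively, taking the supremum over the region shows that, pathwise (for almost every realisation), the random variable in~\eqref{eq:R:inf} is equal to the expression in~\eqref{eq:equiv:dist:R} with $B$ substituted for $W$.

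It then remains to note that this last random variable --- a measurable functional of the Brownian path, since the supremum may be restricted to rational $(u,v)$ --- has, because $B\overset{d}{=}W$, the same distribution as the functional in~\eqref{eq:equiv:dist:R} evaluated at $W$; combined with the pathwise identity of the previous step, this yields the asserted equality in distribution. Almost-sure finiteness of both sides is already provided by Theorem~\ref{thm:H0}. I do not anticipate a genuine obstacle here; the only points calling for a little care are the verification that $B$ is a bona fide Brownian motion up to and including time $0$, and the bookkeeping in the change of variables --- in particular matching the open boundary $t=\infty$ on the left with the open boundary $u=0$ on the right, so that nothing is gained or lost in passing to the supremum.
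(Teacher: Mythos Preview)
Your proof is correct and follows essentially the same route as the paper: both arguments use the change of variables $(u,v)=(1/t,1/s)$ together with the Brownian time-inversion property $\{xW(1/x)\}_{x\ge 0}\overset{d}{=}W$ to transform the supremum over $1\le s\le t<\infty$ into the supremum over $0<u\le v\le 1$. Your presentation differs only cosmetically in that you introduce the inverted Brownian motion $B$ explicitly at the outset and write $W(s)=sB(1/s)$, whereas the paper first manipulates the expression into the form $|vW(1/v)-uW(1/u)|$ and then invokes the inversion property at the end.
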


Preliminary numerical experiments revealed however that there do not seem to be advantages to work with expressions of the form~\eqref{eq:equiv:dist:R} as these seem to transpose practical issues due to the presence of $\infty$ in~\eqref{eq:R:inf} into practical issues near zero. For that reason, we opted for a heuristic estimation approach which we detail in the rest of this section in the case of the procedure based on $R_m$ in~\eqref{eq:Rm} (and which we used \emph{mutatis mutandis} for the procedures based on $S_m$ in~\eqref{eq:Sm} and $T_m$ in~\eqref{eq:Tm} as well).

Fix $\eta > 0$, $\gamma \geq 0$ and $\alpha \in (0,1/2)$, and let $q_{R, 1-\alpha}$ denote the $(1-\alpha)$-quantile of $\sup_{1 \leq s \leq t < \infty} \{ w_R(t) \}^{-1} |t W(s) - s W(t) |$. To estimate $q_{R, 1-\alpha}$, we propose to take $m$ relatively large and simulate a large number $N$ of sample paths of $\{ \{ w_R(k/m) \}^{-1}  R_m(k) \}_{m+1 \leq k \leq m + 2^p}$ for $p$ large from sequences of independent standard normal random variables. At this point, it may be tempting to use the $(1-\alpha)$-empirical quantile of $\sup_{m+1 \leq k \leq m + 2^p} \{ w_R(k/m) \}^{-1}  R_m(k)$ as an estimate of  $q_{R, 1-\alpha}$. Although the latter is expected to be a good estimate of the $(1-\alpha)$-quantile of $\sup_{1 \leq s \leq t \leq T} \{ w_R(t) \}^{-1} |t W(s) - s W(t) |$ for $T = 1 + 2^p/m$, it may underestimate $q_{R,1-\alpha}$ since the supremum for $1 \le s \le t < \infty$ obviously dominates the supremum for $1 \le s \le t \le T$ for any $T>1$. The latter will actually depend on the value $\eta$. Indeed, recall from our discussion in Section~\ref{sec:det} that, because of the factor $(k/m)^{-\eta}$, increasing $\eta$ decreases the variance of $\{ w_R(k/m) \}^{-1} R_m(k)$ as $k$ increases. This has two consequences as far as the above mentioned empirical estimation of $q_{R,1-\alpha}$ is concerned: $(i)$ for any fixed $\eta > 0$, since the variability of $\{ w_R(k/m) \}^{-1} R_m(k)$ decreases as $k$ increases, if we could set $p$ large enough, we would be able to obtain a good estimate of $q_{R, 1-\alpha}$; unfortunately, our margin of action in that respect is limited by computational resources; $(ii)$ for any fixed $p$, as $\eta$ decreases to $0$, the probability that our estimate of the $(1-\alpha)$-quantile of $\sup_{1 \leq s \leq t \leq 1 + 2^p/m} \{ w_R(t) \}^{-1} |t W(s) - s W(t) |$ is also a good estimate of the $(1-\alpha)$-quantile of $\sup_{1 \leq s \leq t < \infty} \{ w_R(t) \}^{-1} |t W(s) - s W(t) |$ decreases to zero; in other words, for any fixed $p$, as $\eta$ decreases, it is less and less likely that the distribution of $\sup_{m+1 \leq k \leq m + 2^p} \{ w_R(k/m) \}^{-1}  R_m(k)$ is a good approximation of the distribution of $\sup_{1 \leq s \leq t < \infty} \{ w_R(t) \}^{-1} |t W(s) - s W(t) |$.
  
To try to empirically solve this quantile estimation problem, we propose, for any fixed $\eta > 0$, to attempt to model the relationship between $p$ and the $(1-\alpha)$-empirical quantile of $\sup_{m+1 \leq k \leq m + 2^p} \{ w_R(k/m) \}^{-1}  R_m(k)$, and to use the fitted model to extrapolate the value of the quantile for larger $p$. To do so, we set $m$ to 500, generated $N=15000$ sample paths of $\{ \{ w_R(k/m) \}^{-1}  R_m(k) \}_{m+1 \leq k \leq m + 2^{18}}$ using a computer cluster and, for $p \in \{10,\dots,18\}$, estimated the $(1-\alpha)$-quantiles of $\sup_{m+1 \leq k \leq m + 2^p} \{ w_R(k/m) \}^{-1}  R_m(k)$. Let $q_{R, 1-\alpha}^{(p)}$, $p \in \{10,\dots,18\}$, denote the resulting estimates. Then, we fitted a so-called \emph{asymptotic regression model} to the pairs $(p,q_{R, 1-\alpha}^{(p)})$, $p \in \{10,\dots,18\}$. The considered model, often used for the analysis of dose--response curves, is a three-parameter model with mean function $f(x) = c + (d-c)\{ 1-\exp(-x/e)\}$, where $y = d$ is the equation of the upper horizontal asymptote of $f$. Its fitting was carried out using the \textsf{R} package \texttt{drc} \citep{drc}. A candidate estimate of $q_{R, 1-\alpha}$, the $(1-\alpha)$-quantile of $\sup_{1 \leq s \leq t < \infty} \{ w_R(t) \}^{-1} |t W(s) - s W(t) |$, is then the resulting estimate of the parameter $d$.

\begin{figure}[t!]
  \begin{center}
    \includegraphics*[width=1\linewidth]{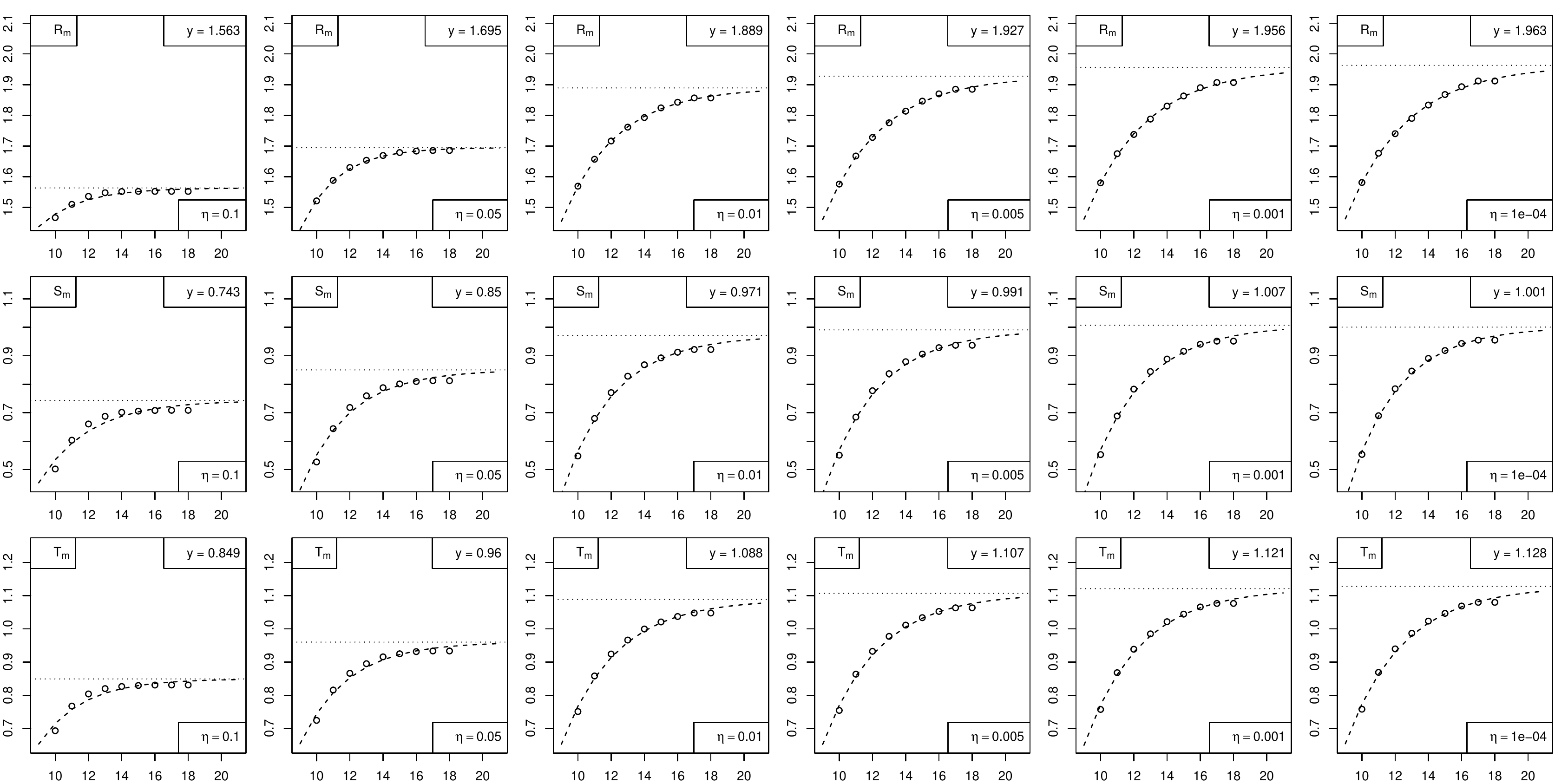}
    \caption{\label{fig:quantiles} For $\alpha = 0.05$ and $\gamma = 0$, scatter plots of $\{(p,q_{R, 1-\alpha}^{(p)})\}_{p \in \{10,\dots,18\}}$ (first row), $\{(p,q_{S, 1-\alpha}^{(p)})\}_{p \in \{10,\dots,18\}}$ (second row) and $\{(p,q_{T, 1-\alpha}^{(p)})\}_{p \in \{10,\dots,18\}}$ (third row), corresponding fitted asymptotic regression models (dashed curves) and estimates of the upper horizontal asymptotes (dotted lines) for $\eta \in \{0.1, 0.05, 0.01, 0.005, 0.001, 0.0001\}$.}
  \end{center}
\end{figure}

The first row of graphs in Figure~\ref{fig:quantiles} shows the scatter plots of $\{(p,q_{R, 1-\alpha}^{(p)})\}_{p \in \{10,\dots,18\}}$ for $\alpha = 0.05$, $\gamma = 0$ and $\eta \in \{0.1, 0.05, 0.01, 0.005, 0.001, 0.0001\}$. The corresponding fitted asymptotic regression models are represented by dashed curves. The estimated upper horizontal asymptotes are represented by dotted horizontal lines whose equations are given in the upper right corners of the plots.

As one can see from the first two graphs in the first row of Figure~\ref{fig:quantiles}, the scatter plots for $\eta = 0.1$ and $\eta = 0.05$ reveal the presence of a plateau for larger values of $p$. The latter is an empirical indication of the fact that the distribution of the random variable $\sup_{m+1 \leq k \leq m + 2^p} \{ w_R(k/m) \}^{-1}  R_m(k)$, say for $p = 18$, seems to be a good approximation of the distribution of $\sup_{m+1 \leq k < \infty} \{ w_R(k/m) \}^{-1}  R_m(k)$ and thus of $\sup_{1 \leq s \leq t < \infty} \{ w_R(t) \}^{-1} |t W(s) - s W(t) |$. In other words, because of the relatively large values of $\eta$ leading to a relatively quick reduction of the variability of $\{ w_R(k/m) \}^{-1}  R_m(k)$ as $k$ increases, the supremum of  $\{\{ w_R(k/m) \}^{-1}  R_m(k)\}_{m+1 \leq k < \infty}$ tends to occur for $k$ relatively close to $m$ (and smaller than $m + 2^{18}$). Specifically, under $H_0$ in~\eqref{eq:H0} and with $\eta = 0.1$ (resp.\ 0.05), the standard deviation of $\{ w_R(k/m) \}^{-1} R_m(k)$ at $k = 2^{18}$ could be very roughly approximated to be $(2^{18}/500)^{-0.1} \approx 53\%$ (resp.\ $(2^{18}/500)^{-0.05} \approx 73\%$) of the standard deviation of $\{ w_R(k/m) \}^{-1} R_m(k)$ in the early stages of the monitoring. The corresponding standard deviation reductions for $\eta = 0.01$, 0.005, 0.001 and 0.0001 are very roughly 7\%, 3\%, 1\% and less than 0.1\%, respectively.

The previous approximate variance reduction calculations explain in some sense why, in the first row of plots in Figure~\ref{fig:quantiles}, the smaller $\eta$, the larger the estimated quantiles $q_{R, 1-\alpha}^{(p)}$, $p \in \{10,\dots,18\}$, and thus the larger the estimated horizontal asymptote (which is a candidate estimate of $q_{R, 1-\alpha}$). As one can see, the rate of increase of the estimated quantiles decreases as $\eta$ decreases. For instance, there are hardly any differences between the plot for $\eta = 0.001$ and the plot for $\eta = 0.0001$. Unsurprisingly, this last plot is actually hardly indistinguishable from the plot for $\eta = 0$. The latter empirical observation practically implies that the proposed estimation technique will provide a finite estimate of $q_{R, 1-\alpha}$ even when, according to Proposition~\ref{prop:R0:infinite}, $q_{R, 1-\alpha}$ is known to be infinite. It follows that it is not meaningful in practice to consider values of $\eta$ that are ``very small''. Based on the previous approximate variance reduction calculations and the plots given in Figure~\ref{fig:quantiles}, our intuitive recommendation is not to take $\eta$ smaller than 0.001. Notice, that with $m=100$, this value of $\eta$ induces an approximate standard deviation reduction under $H_0$ for $\{ w_R(k/m) \}^{-1} R_m(k)$ after $10^{10}$ monitoring steps of less than 2\%. 

The estimated quantiles for $\eta = 0.001$, $\alpha \in \{0.01, 0.05, 0.1\}$, $\gamma = 0$ as well for the largest meaningful value of $\gamma$ (among those that were considered) for the three monitoring schemes are reported in Table~\ref{tab:quantiles:RST} along with standard errors. Estimated quantiles for larger values of $\eta$ are available in the \textsf{R} package \texttt{npcp} \citep{npcp}.

\begin{table}[t!]
\centering
\caption{For $\alpha \in \{0.01, 0.05, 0.1\}$, estimated $(1-\alpha)$-quantiles of the limiting distributions appearing in Theorem~\ref{thm:H0} related to the monitoring schemes based on $R_m$, $S_m$ and $T_m$ for $\eta = 0.001$ and different values of $\gamma$. The quantiles were estimated using asymptotic regression models. Standard errors of the estimates are provided between parentheses. Estimated quantiles for larger values of $\eta$ are available in the \textsf{R} package \texttt{npcp}.} 
\label{tab:quantiles:RST}
\begingroup\footnotesize
\begin{tabular}{lrrrrrr}
  \hline
   & \multicolumn{2}{c}{$R_m$} & \multicolumn{2}{c}{$S_m$} & \multicolumn{2}{c}{$T_m$} \\ \cmidrule(lr){2-3} \cmidrule(lr){4-5} \cmidrule(lr){6-7} $1-\alpha$ & \multicolumn{1}{c}{$\gamma = 0$} & \multicolumn{1}{c}{$\gamma = 0.25$} & \multicolumn{1}{c}{$\gamma = 0$} & \multicolumn{1}{c}{$\gamma = 0.85$} & \multicolumn{1}{c}{$\gamma = 0$} & \multicolumn{1}{c}{$\gamma = 0.45$} \\ \hline
0.99 & 2.157(0.006) & 2.278(0.002) & 1.145(0.019) & 1.199(0.008) & 1.246(0.006) & 1.324(0.007) \\ 
  0.95 & 1.956(0.009) & 2.054(0.003) & 1.007(0.015) & 1.058(0.007) & 1.121(0.011) & 1.164(0.005) \\ 
  0.90 & 1.837(0.008) & 1.952(0.007) & 0.939(0.014) & 0.987(0.006) & 1.046(0.010) & 1.087(0.004) \\ 
   \hline
\end{tabular}
\endgroup
\end{table}

\begin{remark}
It is a research project of its own to investigate more thoroughly the estimation of the quantiles both empirically and theoretically. For instance, one may wish to investigate bounds on the probability for a given $\eta > 0$ and $T > 1$ that $\sup_{1 \leq s \leq t \leq T} \{ w_R(t) \}^{-1} |t W(s) - s W(t) |$ occurs at some $(s,t)$ with $t > T$, and whether such occurrences tend to be  associated with small suprema or large suprema. 
\end{remark}

\section{Monte Carlo experiments}
\label{sec:MC}

To investigate the finite-sample properties of the studied open-end sequential change-point detection procedures, we carried out extensive Monte Carlo experiments. One should however keep in mind that numerical experiments cannot provide a full insight into the finite-sample behavior of open-end approaches as finite computing resources impose that monitoring has to be stopped eventually. 

To estimate the long-run variance $\sigma^2$ related to the learning sample, we used the approach of \cite{And91} based on the quadratic spectral kernel with automatic bandwidth selection as implemented in the function \texttt{lrvar()} of the \textsf{R} package \texttt{sandwich} \citep{Zei04}. We considered 10 data generating models, denoted M1, \dots, M10. Models M1, \dots, M5 are simple AR(1) models with normal innovations whose autoregressive parameter is equal to 0, 0.1, 0.3, 0.5 and 0.7, respectively. These models were chosen among others to allow a comparison of our results with those of \citet[Section~4.1]{GosKleDet20}. Model M6 generates independent observations from the Student $t$ distribution with 5 degrees of freedom. Model M7 is a GARCH(1,1) model with normal innovations and parameters $\omega = 0.012$, $\beta = 0.919$ and $\alpha = 0.072$ to mimic SP500 daily log-returns following \cite{JonPooRoc07}. Models M8 and M9 are the nonlinear autoregressive model used in \citet[Section 3.3]{PapPol01} and the exponential autoregressive model considered in \cite{AueTjo90} and \citet[Section 3.3]{PapPol01}. The underlying generating equations are
\begin{equation*}
X_i = 0.6 \sin( X_{i-1} ) + \epsilon_i
\end{equation*}
and
\begin{equation*}
X_i = \{ 0.8 - 1.1 \exp ( - 50 X_{i-1}^2 ) \} X_{i-1} + 0.1 \epsilon_i,
\end{equation*}
respectively, where the $\epsilon_i$ are independent standard normal innovations. Note that, for all time series models, a burn-out sample of 100 observations was used. Finally, in order to mimic count data, model M10 generates independent observations from a Poisson distribution with parameter 3.

In a first series of experiments, we attempted to assess how well the studied tests hold their level. To do so, we generated 5000 samples from models M1--M10 and used the first $m$ observations of each sample as learning sample. All the sequential tests were carried out at the 5\% nominal level using the estimated quantiles available in the \textsf{R} package \texttt{npcp} (see also Table~\ref{tab:quantiles:RST}). Because monitoring cannot go on indefinitely, we stopped the sequential testing procedures after the arrival of observation $n = m + 10000$. The percentages of rejection of $H_0$ in~\eqref{eq:H0} for the procedures based on $R_m$ in~\eqref{eq:Rm}, $S_m$ in~\eqref{eq:Sm}, $T_m$ in~\eqref{eq:Tm} with $\gamma = 0$ and $\eta \in \{0.05, 0.01, 0.005, 0.001\}$, as well as for the procedures based on $E_m$ in~\eqref{eq:Em} and $Q_m$ in~\eqref{eq:Qm} with $\gamma = 0$, are reported in Table~\ref{tab:H0} (to carry out the procedures based on $Q_m$ and $E_m$, we used the quantiles reported in Table 1 of \cite{HorHusKokSte04} and \cite{GosKleDet20}, respectively). Given the closed-end nature of the experiments, one should keep in mind that the empirical levels would have been higher had larger values of $n$ been considered. As one can see from Table~\ref{tab:H0}, for most models, the empirical levels drop below the 5\% threshold rather quickly as $m$ increases. When the tests are too liberal, it is probably mostly a consequence of the difficulty of the estimation of the long-run variance $\sigma^2$. It is for instance unsurprising that a large value of $m$ is necessary to obtain a reasonably accurate estimate of $\sigma^2$ for model M5 (the AR(1) model with the strongest serial dependence) or model M9 (an exponential autoregressive model). Notice that, overall, the tests based on $E_m$ and $Q_m$ are less liberal then the tests based on $R_m$, $S_m$ and $T_m$ when the serial dependence is strong and $m$ is small. The opposite tends to occur as $m$ increases. Among the three proposed procedures, the one based on $S_m$ is the most conservative, followed by the procedure based on $T_m$. The fact that the empirical levels are higher for large values of $\eta$ is due to the factor $(k/m)^{-\eta}$ which favors the occurrence of \emph{false alarms} (threshold exceedences) at the beginning of the monitoring. 

% \input{R/H0/H0.tex}
% latex table generated in R 4.0.2 by xtable 1.8-4 package
% Thu Jul  9 15:51:46 2020
\begin{table}[t!]
\centering
\caption{Percentages of rejection of $H_0$ in~\eqref{eq:H0} for the procedures based on $R_m$, $S_m$, $T_m$ with $\gamma = 0$ and $\eta \in \{0.1, 0.05, 0.01, 0.001\}$, as well as for the procedures based on $E_m$ and $Q_m$ with $\gamma = 0$. The rejection percentages are computed from 5000 samples of size $n = m + 10000$ generated from the time series models M1 -- M10.} 
\label{tab:H0}
\begingroup\scriptsize
\begin{tabular}{lrrrrrrrrrrrrrrr}
  \hline
  \multicolumn{2}{c}{} & \multicolumn{4}{c}{$R_m$ with $\eta =$} & \multicolumn{4}{c}{$S_m$ with $\eta =$} & \multicolumn{4}{c}{$T_m$ with $\eta =$}  \\ \cmidrule(lr){3-6} \cmidrule(lr){7-10} \cmidrule(lr){11-14}  Model & $m$ & 0.05 & 0.01 & 0.005 & 0.001  & 0.05 & 0.01 & 0.005 & 0.001  & 0.05 & 0.01 & 0.005 & 0.001 & $E_m$ & $Q_m$\\ \hline
M1 & 100 & 8.7 & 7.8 & 7.4 & 7.1 & 5.5 & 4.8 & 4.6 & 4.5 & 6.9 & 5.4 & 5.4 & 5.4 & 6.1 & 6.7 \\ 
   & 200 & 6.7 & 5.2 & 4.7 & 4.5 & 3.7 & 2.8 & 2.8 & 2.8 & 4.6 & 3.7 & 3.6 & 3.5 & 4.9 & 5.1 \\ 
   & 400 & 5.4 & 3.6 & 3.1 & 2.8 & 2.4 & 1.6 & 1.4 & 1.2 & 3.3 & 2.3 & 2.1 & 2.0 & 4.7 & 5.0 \\ 
   & 800 & 4.3 & 2.5 & 2.1 & 1.7 & 1.7 & 0.9 & 0.7 & 0.7 & 2.6 & 1.4 & 1.2 & 1.1 & 3.1 & 3.9 \\ 
  M2 & 100 & 9.0 & 8.2 & 7.8 & 7.6 & 6.0 & 5.1 & 4.9 & 4.8 & 7.2 & 6.1 & 5.9 & 6.0 & 6.3 & 6.6 \\ 
   & 200 & 6.9 & 5.3 & 5.0 & 4.7 & 3.8 & 3.0 & 2.9 & 2.8 & 4.9 & 3.8 & 3.7 & 3.6 & 4.8 & 5.2 \\ 
   & 400 & 5.4 & 3.6 & 3.2 & 2.7 & 2.4 & 1.6 & 1.4 & 1.3 & 3.5 & 2.3 & 2.2 & 2.1 & 4.8 & 5.1 \\ 
   & 800 & 4.2 & 2.5 & 2.1 & 1.9 & 1.7 & 0.9 & 0.7 & 0.7 & 2.6 & 1.4 & 1.2 & 1.1 & 3.2 & 3.9 \\ 
  M3 & 100 & 10.4 & 9.8 & 9.5 & 9.3 & 7.1 & 6.2 & 6.0 & 5.9 & 8.4 & 7.4 & 7.4 & 7.3 & 6.7 & 7.2 \\ 
   & 200 & 7.5 & 5.9 & 5.5 & 5.3 & 4.1 & 3.4 & 3.2 & 3.1 & 5.4 & 4.2 & 4.1 & 4.0 & 4.9 & 5.6 \\ 
   & 400 & 5.5 & 3.6 & 3.2 & 2.9 & 2.6 & 1.6 & 1.5 & 1.4 & 3.7 & 2.5 & 2.4 & 2.3 & 4.9 & 5.2 \\ 
   & 800 & 4.2 & 2.5 & 2.1 & 1.9 & 1.8 & 0.9 & 0.9 & 0.7 & 2.7 & 1.5 & 1.2 & 1.2 & 3.3 & 4.0 \\ 
  M4 & 100 & 13.1 & 12.3 & 11.9 & 11.6 & 9.0 & 8.2 & 8.0 & 7.9 & 10.7 & 9.7 & 9.6 & 9.6 & 7.5 & 7.8 \\ 
   & 200 & 8.9 & 7.3 & 6.8 & 6.4 & 4.8 & 4.0 & 3.8 & 3.6 & 6.3 & 5.0 & 4.8 & 4.7 & 5.3 & 6.0 \\ 
   & 400 & 6.1 & 4.0 & 3.6 & 3.3 & 2.8 & 1.8 & 1.7 & 1.6 & 4.0 & 2.7 & 2.5 & 2.5 & 5.0 & 5.4 \\ 
   & 800 & 4.4 & 2.4 & 2.1 & 1.9 & 1.9 & 1.0 & 0.9 & 0.8 & 2.8 & 1.7 & 1.5 & 1.4 & 3.3 & 4.0 \\ 
  M5 & 100 & 18.4 & 18.3 & 17.7 & 17.4 & 13.9 & 13.1 & 12.8 & 12.6 & 15.5 & 14.8 & 14.5 & 14.6 & 9.6 & 9.7 \\ 
   & 200 & 11.5 & 10.1 & 9.6 & 9.4 & 7.3 & 6.3 & 5.9 & 5.6 & 8.9 & 7.2 & 7.0 & 6.9 & 6.6 & 7.2 \\ 
   & 400 & 7.6 & 5.2 & 4.7 & 4.4 & 3.8 & 2.6 & 2.5 & 2.2 & 5.0 & 3.5 & 3.4 & 3.3 & 5.3 & 6.0 \\ 
   & 800 & 4.9 & 2.9 & 2.5 & 2.2 & 2.1 & 1.3 & 1.1 & 1.1 & 3.1 & 2.0 & 1.8 & 1.7 & 3.7 & 4.4 \\ 
  M6 & 100 & 10.8 & 10.0 & 9.4 & 8.9 & 6.8 & 5.9 & 5.7 & 5.6 & 8.2 & 6.9 & 6.8 & 6.8 & 6.8 & 6.9 \\ 
   & 200 & 8.3 & 6.4 & 5.8 & 5.4 & 4.3 & 3.3 & 3.1 & 2.9 & 5.7 & 4.4 & 4.2 & 4.1 & 5.2 & 5.6 \\ 
   & 400 & 5.9 & 3.8 & 3.4 & 3.1 & 2.9 & 1.8 & 1.7 & 1.6 & 3.7 & 2.7 & 2.4 & 2.3 & 4.7 & 5.2 \\ 
   & 800 & 4.5 & 2.4 & 2.1 & 1.9 & 1.9 & 1.0 & 1.0 & 0.9 & 2.7 & 1.6 & 1.4 & 1.4 & 3.4 & 3.8 \\ 
  M7 & 100 & 12.8 & 11.4 & 10.8 & 10.5 & 7.7 & 6.9 & 6.8 & 6.5 & 9.3 & 8.3 & 8.1 & 7.9 & 6.7 & 6.8 \\ 
   & 200 & 9.6 & 7.6 & 7.1 & 6.7 & 4.8 & 4.1 & 3.9 & 3.7 & 6.1 & 5.0 & 4.8 & 4.8 & 5.3 & 5.5 \\ 
   & 400 & 6.4 & 4.2 & 3.6 & 3.4 & 2.9 & 1.8 & 1.5 & 1.5 & 3.8 & 2.6 & 2.5 & 2.5 & 4.4 & 4.8 \\ 
   & 800 & 5.0 & 3.0 & 2.4 & 2.1 & 1.8 & 1.2 & 1.1 & 1.0 & 2.9 & 1.6 & 1.5 & 1.4 & 3.4 & 3.9 \\ 
  M8 & 100 & 9.3 & 8.5 & 8.0 & 7.8 & 6.7 & 5.9 & 5.7 & 5.6 & 7.9 & 6.8 & 6.7 & 6.6 & 6.6 & 7.1 \\ 
   & 200 & 6.9 & 5.6 & 5.2 & 4.9 & 4.1 & 3.4 & 3.2 & 3.1 & 5.1 & 3.9 & 3.8 & 3.7 & 5.1 & 5.5 \\ 
   & 400 & 5.4 & 3.5 & 3.1 & 2.8 & 2.5 & 1.5 & 1.4 & 1.3 & 3.9 & 2.4 & 2.1 & 2.1 & 4.8 & 5.1 \\ 
   & 800 & 4.2 & 2.4 & 2.2 & 1.9 & 1.8 & 1.0 & 0.9 & 0.8 & 2.8 & 1.5 & 1.4 & 1.3 & 3.7 & 4.1 \\ 
  M9 & 100 & 36.0 & 35.7 & 35.1 & 34.8 & 28.0 & 26.9 & 26.6 & 26.1 & 31.1 & 29.9 & 29.6 & 29.6 & 15.2 & 13.0 \\ 
   & 200 & 28.8 & 26.3 & 25.3 & 24.7 & 18.6 & 15.7 & 15.2 & 15.0 & 22.6 & 19.6 & 19.1 & 18.8 & 11.7 & 10.4 \\ 
   & 400 & 20.5 & 17.0 & 16.0 & 15.3 & 11.0 & 8.6 & 8.2 & 7.8 & 14.4 & 11.5 & 11.0 & 10.7 & 9.6 & 8.7 \\ 
   & 800 & 14.8 & 10.3 & 9.4 & 8.8 & 5.5 & 3.7 & 3.4 & 3.1 & 9.0 & 5.8 & 5.4 & 5.1 & 7.5 & 7.2 \\ 
  M10 & 100 & 8.9 & 7.9 & 7.4 & 7.1 & 5.5 & 5.0 & 4.8 & 4.8 & 6.6 & 5.6 & 5.6 & 5.6 & 6.4 & 6.5 \\ 
   & 200 & 7.8 & 5.6 & 5.1 & 4.9 & 4.2 & 3.2 & 3.1 & 3.0 & 5.1 & 4.3 & 4.2 & 4.0 & 5.0 & 5.4 \\ 
   & 400 & 5.0 & 3.3 & 2.9 & 2.6 & 2.4 & 1.5 & 1.4 & 1.3 & 3.5 & 2.2 & 2.0 & 2.0 & 5.0 & 5.2 \\ 
   & 800 & 4.1 & 2.3 & 2.1 & 1.9 & 1.5 & 0.9 & 0.8 & 0.7 & 2.4 & 1.3 & 1.2 & 1.1 & 3.5 & 3.6 \\ 
   \hline
\end{tabular}
\endgroup
\end{table}

In a second series of experiments, we studied the finite-sample behavior of the tests under $H_1$ in~\eqref{eq:H1} using simulation scenarios similar to those considered in \citet[Section~4.1]{GosKleDet20}. We generated 1000 samples of size $n = m + 7000$ from models M1 with $m=100$ and M4 with $m=200$ and, for each sample, added a positive offset of $\delta$ to all observations after position $k^\star \in \{m, m + 500, m + 1000, m + 5000\}$. %The first $m$ observations of each sample were used as learning sample.
We started by investigating the influence of $\eta$ on the most conservative procedure according to Table~\ref{tab:H0}. The rejection percentages for the test based on $S_m$ with $\gamma = 0$ and $\eta \in \{0.1, 0.01, 0.001\}$, as well as for the procedures based on $E_m$ and $Q_m$ with $\gamma = 0$ are represented in Figure~\ref{fig:ts14S}. As one can see, when the change occurs right at the beginning of the monitoring, the larger $\eta$, the more powerful the procedure based on $S_m$. As $k^\star$ increases, the influence of the factor $(k/m)^{-\eta}$ comes into effect and the opposite tends to occurs (although the difference in power does not seem of practical importance for the monitoring periods under consideration). As far as the procedures based on $E_m$ and $Q_m$ are concerned, they are more powerful than the procedure based on $S_m$ when the change occurs at the beginning of the monitoring but, as expected, become less powerful as $k^\star$ increases. The fourth column of plots in Figure~\ref{fig:ts14S} shows that the difference in terms of power can be substantial. One should however keep in mind that, because of the factor $(k/m)^{-\eta}$, the test based on $E_m$ for instance may again become more powerful than the procedure based on $S_m$ for $k^\star$ extremely large. Nevertheless, even if we consider the least favorable setting ($\eta=0.1$), we believe that such a scenario is extremely unlikely to occur in practice given the (relatively) slow decrease of the function $t \mapsto t^{-\eta}$ and the fact that the weighting used in the definition of $E_m$ penalizes in some sense late changes as explained in the discussion below~\eqref{eq:Em}.

From the second row of plots of Figure~\ref{fig:ts14S}, we see that the previous conclusions seem to remain qualitatively the same when model M4 is used although, unsurprisingly, the stronger serial dependence gives the impression that the values of $k^\star$ are smaller.

\begin{figure}[t!]
\begin{center}
  \includegraphics*[width=1\linewidth]{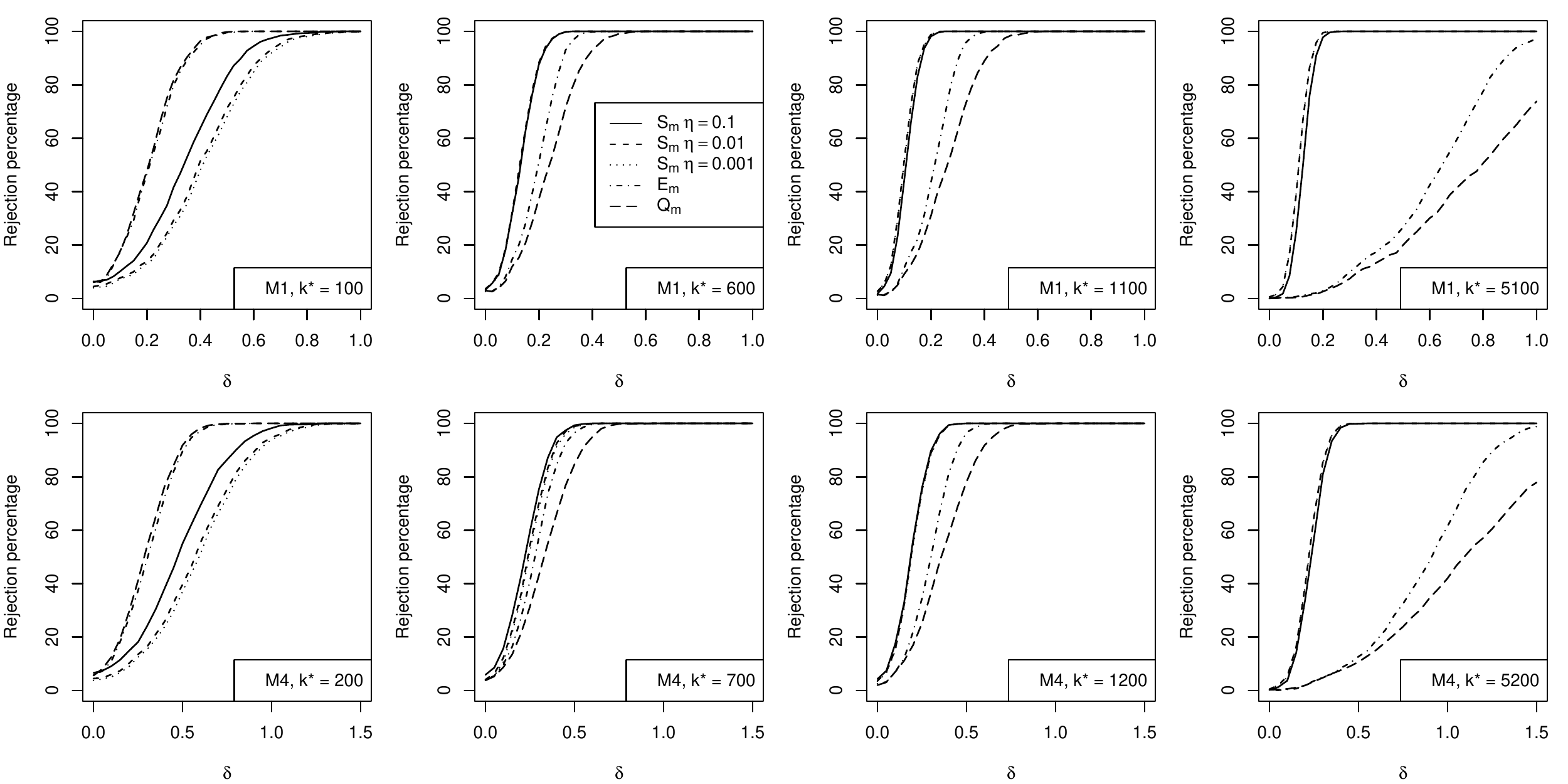}
  \caption{\label{fig:ts14S} Rejection percentages of $H_0$ in~\eqref{eq:H0} for the procedure based on $S_m$ with $\gamma = 0$ and $\eta \in \{0.1, 0.05, 0.01, 0.001\}$, as well as for the procedures based on $E_m$ and $Q_m$ with $\gamma = 0$ estimated from 1000 samples of size $n = m + 7000$ from model M1 with $m=100$ or M4 with $m=200$ such that, for each sample, a positive offset of $\delta$ was added to all observations after position $k^\star$.}
\end{center}
\end{figure}

Figure~\ref{fig:ts14all} reports the rejection percentages of $H_0$ in~\eqref{eq:H0} for the same experiment but for the procedures based on $R_m$, $S_m$, $T_m$ with $\gamma = 0$ and $\eta = 0.005$, as well as for the procedures based on $E_m$ and $Q_m$ with $\gamma = 0$. As one can see, among the three studied procedures, those based on $R_m$ and $T_m$ seem more powerful than the one based $S_m$ when the change occurs at the beginning of the monitoring (which is in accordance with the fact that the procedure based on $S_m$ is the most conservative as can be seen from Table~\ref{tab:H0}), while there seems to be very little difference between the three tests when the change occurs later. 

\begin{figure}[t!]
\begin{center}
  \includegraphics*[width=1\linewidth]{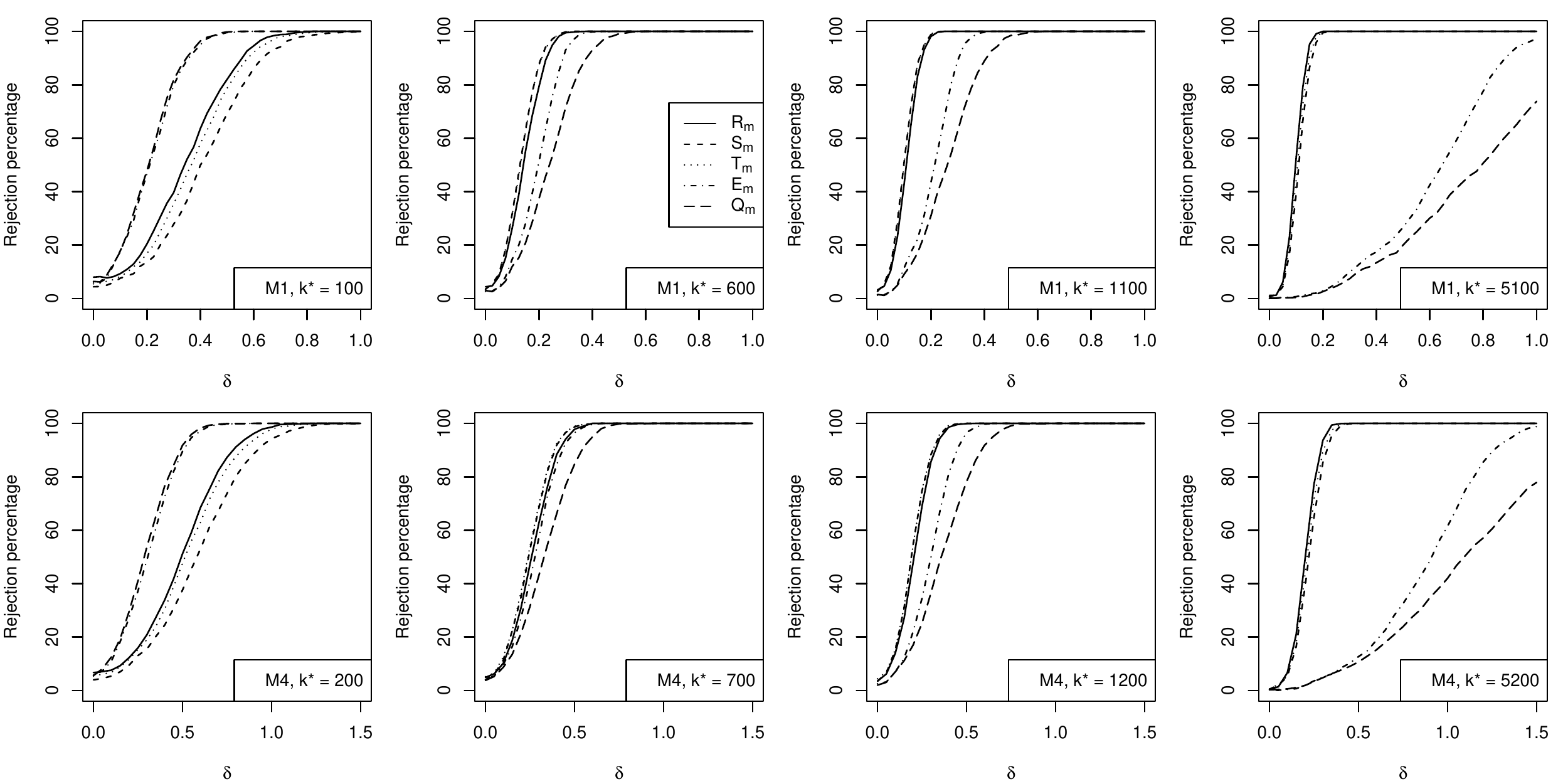}
  \caption{\label{fig:ts14all}  Rejection percentages of $H_0$ in~\eqref{eq:H0} for the procedures based on $R_m$, $S_m$, $T_m$ with $\gamma = 0$ and $\eta = 0.005$, as well as for the procedures based on $E_m$ and $Q_m$ with $\gamma = 0$ estimated from 1000 samples of size $n = m + 7000$ from model M1 with $m=100$ or M4 with $m=200$ such that, for each sample, a positive offset of $\delta$ was added to all observations after position $k^\star$.}
\end{center}
\end{figure}

The increase in power resulting from taking $\gamma$ equal to its largest meaningful value is illustrated in Figure~\ref{fig:ts14:gamma}. As expected, the improvement is visible only when changes occur at the beginning of the monitoring. In practice, we suggest to increase the value of $\gamma$ only when it is believed that the size $m$ of the learning sample permits a reasonably accurate estimation of the long-run variance $\sigma^2$.

\begin{figure}[t!]
\begin{center}
  \includegraphics*[width=1\linewidth]{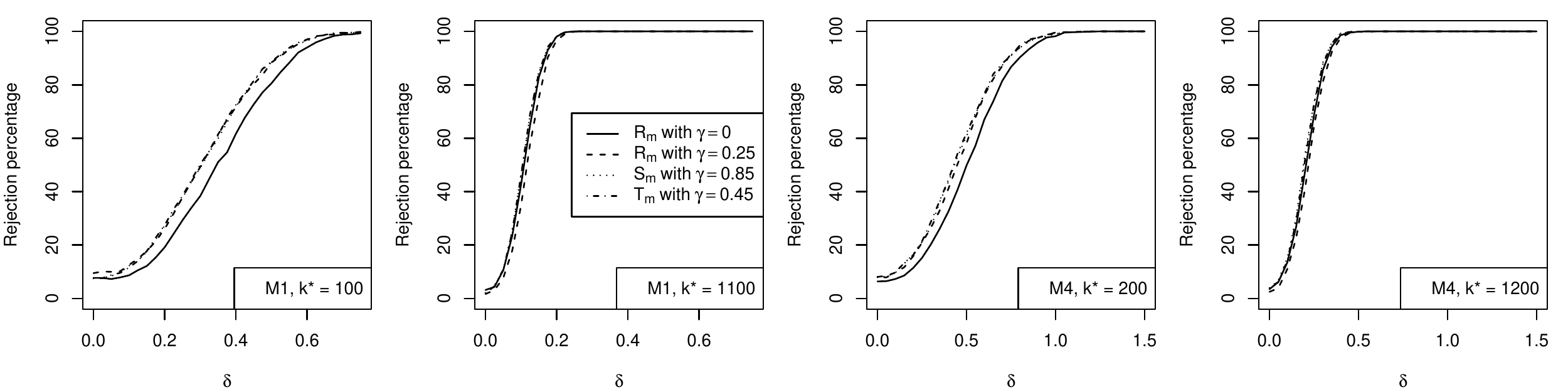}
  \caption{\label{fig:ts14:gamma} For $\eta = 0.001$, rejection percentages of $H_0$ in~\eqref{eq:H0} for the procedures based on $R_m$ with $\gamma = 0$, $R_m$ with $\gamma = 0.25$, $S_m$ with $\gamma = 0.85$ and $T_m$ with $\gamma = 0.45$ estimated from 1000 samples of size $n = m + 7000$ from model M1 with $m=100$ or M4 with $m=200$ such that, for each sample, a positive offset of $\delta$ was added to all observations after position $k^\star$.}
\end{center}
\end{figure}

Finally, we report the results of an experiment involving a longer monitoring period with late changes. Table~\ref{tab:H1} provides the percentages of rejection of $H_0$ in~\eqref{eq:H0} for the procedures based on $R_m$, $S_m$, $T_m$ with $\gamma = 0$ and $\eta \in \{0.1, 0.05, 0.01, 0.005, 0.001\}$ estimated from 2000 samples of size $n = 20000$ from model M1 with $m=100$ such that, for each sample, a positive offset of $\delta=0.1$ was added to all observations after position $k^\star=15000$. These results highlight again the role of $\eta$ and its influence on power through the factor $(k/m)^{-\eta}$. Notice that the corresponding rejection percentages of the procedures based $E_m$ and $Q_m$ are less than~1\%. 

\begin{table}[t!]
\centering
\caption{Percentages of rejection of $H_0$ in~\eqref{eq:H0} for the procedures based on $R_m$, $S_m$, $T_m$ with $\gamma = 0$ and $\eta \in \{0.1, 0.05, 0.01, 0.005, 0.001\}$ estimated from 2000 samples of size $n = 20000$ from model M1 with $m=100$ such that, for each sample, a positive offset of $\delta=0.1$ was added to all observations after position $k^\star=15000$. The corresponding rejection percentages of the procedures based on $E_m$ and $Q_m$ are 0.7 and 0.7, respectively.} 
\label{tab:H1}
\begingroup\footnotesize
\begin{tabular}{rrrrrr}
  \hline
  & $\eta = 0.1$ & $\eta = 0.05$ & $\eta = 0.01$ & $\eta = 0.005$ & $\eta = 0.001$\\ \hline
$R_m$ & 62.3 & 88.9 & 95.5 & 95.6 & 95.7 \\ 
  $S_m$ & 64.4 & 79.7 & 87.1 & 87.6 & 87.7 \\ 
  $T_m$ & 66.3 & 84.9 & 92.1 & 92.5 & 92.8 \\ 
   \hline
\end{tabular}
\endgroup
\end{table}

Taking into account all the empirical results summarized in this section, we recommend to set $\eta$ to 0.005 or 0.001 and use either the procedure based on $T_m$ or the procedure based on $S_m$ as they are more conservative than the monitoring scheme based on $R_m$ while almost as powerful (except when changes occur at the beginning of the monitoring). If it is believed that the size $m$ of the learning sample permits a reasonably accurate estimation of the long-run variance $\sigma^2$, to improve the behavior of the procedures at the beginning of the monitoring, one can additionally set $\gamma$ to 0.45 for $T_m$ and 0.85 for $S_m$.

\section{Extensions to parameters whose estimators exhibit a mean-like behavior}
\label{sec:linear}

In their seminal work, \cite{GosKleDet20} actually considered monitoring schemes sensitive to changes in time series parameters whose estimators exhibit an asymptotic mean-like behavior. The aim of this section is to briefly demonstrate that the same type of extension is possible for the sequential procedures studied in this work. For the sake of keeping the notation simple, we restrict our discussion to univariate time series and univariate parameters.

Let $F$ be a univariate distribution function (d.f.) and let $\theta = \theta(F)$ be a univariate parameter of $F$ (such as the expectation, the variance, etc). Let $F_{j:k}$ be the empirical d.f.\ computed from the stretch $X_j,\dots,X_k$ of available observations. More formally, for any integers $j,k \geq 1$ and $x \in \R$, let
\begin{equation*}
  %\label{eq:Fjk}
  F_{j:k}(x) =
  \left\{
    \begin{array}{ll}
      \disp \frac{1}{k-j+1}\sum_{i=j}^k \1(X_i \leq x), \qquad & \text{if } j \leq k, \\
      0, \qquad &\text{otherwise},
    \end{array}
  \right.
\end{equation*}
and let $\theta_{j:k} = \theta(F_{j:k})$ be the corresponding plug-in estimator of $\theta$ computed from the stretch $X_j,\dots,X_k$. Natural extensions of the detectors $R_m$, $S_m$ and $T_m$ defined in~\eqref{eq:Rm},~\eqref{eq:Sm} and~\eqref{eq:Tm}, respectively, for monitoring changes in the parameter $\theta$ are then given, for any $k \geq m+1$, by
\begin{align}
  \label{eq:Rm:theta}
  R_m^\theta(k) &=  \max_{m \leq j \leq k-1} \frac{j (k-j)}{m^{3/2}}  | \theta_{1:j} - \theta_{j+1:k} |, \\
  \label{eq:Sm:theta}
  S_m^\theta(k) &=  \frac{1}{m} \sum_{j=m}^{k-1} \frac{j (k-j)}{m^{3/2}}  | \theta_{1:j} - \theta_{j+1:k} |, \\
  \label{eq:Tm:theta}
  T_m^\theta(k) &=  \sqrt{ \frac{1}{m}  \sum_{j=m}^{k-1} \left\{ \frac{j (k-j)}{m^{3/2}}  ( \theta_{1:j} - \theta_{j+1:k} ) \right\}^2 }.
\end{align}
Furthermore, assuming it exists, let
$$
\IF(x,F,\theta) = \lim_{\eps \downarrow 0} \frac{\theta\{(1-\eps) F + \eps \delta_x\} - \theta(F)}{\eps}
$$
denote the influence function related to $\theta$ and $F$ at $x \in \R$, where $\delta_x(\cdot) = \1(x \leq \cdot)$ is the d.f.\ of the Dirac measure at $x$. To be able to study the asymptotic validity of monitoring schemes based on $R_m^\theta$ in~\eqref{eq:Rm:theta}, $S_m^\theta$ in~\eqref{eq:Sm:theta} and $T_m^\theta$ in~\eqref{eq:Tm:theta}, we follow \cite{GosKleDet20} and focus on parameters $\theta$ that admit an asymptotic linearization in terms of the influence function, that is, such that
\begin{equation}
  \label{eq:linear}
  \theta_{j:k} - \theta = \theta(F_{j:k}) - \theta(F) = \frac{1}{k - j + 1} \sum_{i=j}^k \IF(X_i, F, \theta) + R_{j,k},
\end{equation}
where the remainders $R_{j,k}$ are asymptotically negligible in the sense of the following condition.

\begin{cond}
  \label{cond:remainders}
  The remainders in~\eqref{eq:linear} satisfy
  $$
  k^{-1/2} \max_{1 \leq i < j \leq k} (j-i+1) R_{i,j} \as 0 \qquad \text{as } k \to \infty,
  $$
  where the arrow~`~$\as$' denotes almost sure convergence.
\end{cond}

In the rest of this section, we assume that $H_0$ in~\eqref{eq:H0} holds. Moreover, for any integers $j,k \geq 1$, let
\begin{equation*}
  %\label{eq:Ijk}
  \bar \IF_{j:k} =
  \left\{
    \begin{array}{ll}
      \disp \frac{1}{k-j+1}\sum_{i=j}^k  \IF(X_i, F, \theta), \qquad & \text{if } j \leq k, \\
      0, \qquad &\text{otherwise}.
    \end{array}
  \right.
\end{equation*}
If the random variables $\IF(X_1, F, \theta),\dots,\IF(X_m, F, \theta),\IF(X_{m+1}, F, \theta), \dots$ were observable, one could naturally consider analogues of the detectors $R_m$, $S_m$ and $T_m$ in~\eqref{eq:Rm},~\eqref{eq:Sm} and~\eqref{eq:Tm}, respectively, defined, for $k \geq m+1$, by
\begin{align*}
%  \label{eq:Rm:theta}
  R_m^\IF(k) &=  \max_{m \leq j \leq k-1} \frac{j (k-j)}{m^{3/2}}  | \bar \IF_{1:j} - \bar \IF_{j+1:k} |, \\
%  \label{eq:Sm:theta}
  S_m^\IF(k) &=  \frac{1}{m} \sum_{j=m}^{k-1} \frac{j (k-j)}{m^{3/2}}  | \bar \IF_{1:j} - \bar \IF_{j+1:k} |, \\
%  \label{eq:Tm:theta}
  T_m^\IF(k) &=  \sqrt{ \frac{1}{m}  \sum_{j=m}^{k-1} \left\{ \frac{j (k-j)}{m^{3/2}}  ( \bar \IF_{1:j} - \bar \IF_{j+1:k} ) \right\}^2 }.
\end{align*}
Upon assuming that Condition~\ref{cond:H0} holds for the sequence $\big(\IF(X_i, F, \theta) \big)_{i \in \Z}$, one immediately obtains an analogue of Theorem~\ref{thm:H0} for the detectors $R_m^\IF$, $S_m^\IF$ and  $T_m^\IF$. The next result, proven in Appendix~\ref{app:others}, shows that, if Condition~\ref{cond:remainders} is additionally assumed, an analogue of Theorem~\ref{thm:H0} also holds for the computable detectors $R_m^\theta$ in~\eqref{eq:Rm:theta}, $S_m^\theta$ in~\eqref{eq:Sm:theta} and $T_m^\theta$ in~\eqref{eq:Tm:theta}.

\begin{prop}
  \label{prop:theta:IF}
Under $H_0$ in~\eqref{eq:H0} and Condition~\ref{cond:remainders}, for any $\eta > 0$, $\epsilon > 0$ and $\gamma \geq 0$,
\begin{align*}
  \sup_{m+1 \leq k \leq \infty}  \{ w_R(k/m) \}^{-1} | R_m^\theta(k) - R_m^\IF(k) | = o_\Pr(1), \\
  \sup_{m+1 \leq k \leq \infty}  \{ w_S(k/m) \}^{-1} | S_m^\theta(k) - S_m^\IF(k) | = o_\Pr(1), \\
  \sup_{m+1 \leq k \leq \infty}  \{ w_T(k/m) \}^{-1} | T_m^\theta(k) - T_m^\IF(k) | = o_\Pr(1),
\end{align*}
where the threshold functions $w_R$, $w_S$ and $w_T$ are defined in~\eqref{eq:wR},~\eqref{eq:wS} and~\eqref{eq:wT}, respectively.
\end{prop}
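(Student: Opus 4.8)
The plan is to reduce all three statements, via the linearization~\eqref{eq:linear}, to a single deterministic estimate for the contribution of the remainders $R_{i,j}$, and then to invoke Condition~\ref{cond:remainders}. Using~\eqref{eq:linear}, write $\theta_{1:j}-\theta_{j+1:k}=(\bar\IF_{1:j}-\bar\IF_{j+1:k})+(R_{1,j}-R_{j+1,k})$. The elementary facts $\big||a|-|b|\big|\le|a-b|$, $\big|\max_j a_j-\max_j b_j\big|\le\max_j|a_j-b_j|$, the triangle inequality for sums, and the reverse triangle inequality for the normalized $\ell^2$ norm then give, for every $k\ge m+1$,
\begin{align*}
  |R_m^\theta(k)-R_m^\IF(k)| &\le \max_{m\le j\le k-1}\frac{j(k-j)}{m^{3/2}}\big(|R_{1,j}|+|R_{j+1,k}|\big),\\
  |S_m^\theta(k)-S_m^\IF(k)| &\le \frac1m\sum_{j=m}^{k-1}\frac{j(k-j)}{m^{3/2}}\big(|R_{1,j}|+|R_{j+1,k}|\big),\\
  |T_m^\theta(k)-T_m^\IF(k)| &\le \sqrt{\frac1m\sum_{j=m}^{k-1}\Big\{\frac{j(k-j)}{m^{3/2}}\big(|R_{1,j}|+|R_{j+1,k}|\big)\Big\}^2}.
\end{align*}

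Next I would introduce $\rho_k=\max_{1\le i\le j\le k}(j-i+1)|R_{i,j}|$, which is nondecreasing in $k$ and, by Condition~\ref{cond:remainders}, satisfies $\rho_k/\sqrt{k}\to0$ almost surely. Since $j|R_{1,j}|\le\rho_j\le\rho_k$ and $(k-j)|R_{j+1,k}|\le\rho_k$ for $m\le j\le k-1$, the bracketed factor is controlled uniformly in $j$ by
$$\frac{j(k-j)}{m^{3/2}}\big(|R_{1,j}|+|R_{j+1,k}|\big)\le\frac{2k\rho_k}{m^{3/2}}.$$
Substituting this into the three displays and bounding the number of summands by $k$ yields $|R_m^\theta(k)-R_m^\IF(k)|\le 2k\rho_k/m^{3/2}$, $|S_m^\theta(k)-S_m^\IF(k)|\le 2k^2\rho_k/m^{5/2}$ and $|T_m^\theta(k)-T_m^\IF(k)|\le 2k^{3/2}\rho_k/m^2$. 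Dividing by the threshold functions and using $w_R(t)\ge\epsilon t^{3/2+\eta}$, $w_S(t)\ge\epsilon t^{5/2+\eta}$, $w_T(t)\ge\epsilon t^{2+\eta}$ (immediate from~\eqref{eq:w:gamma}), the exponents match in all three cases and each normalized difference is bounded above by the single quantity $2\rho_k m^\eta/(\epsilon\,k^{1/2+\eta})$.

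It then remains to show that $\sup_{k\ge m+1}2\rho_k m^\eta/(\epsilon\,k^{1/2+\eta})\to0$. Fixing $\delta>0$, by the almost sure convergence $\rho_k/\sqrt k\to0$ there is an almost surely finite $N$ with $\rho_k\le\delta\sqrt k$ for all $k\ge N$; then for every $m\ge N$ and every $k\ge m+1$ one has $k>m$, hence $2\rho_k m^\eta/(\epsilon\,k^{1/2+\eta})\le 2\delta(m/k)^\eta/\epsilon\le2\delta/\epsilon$. Thus the supremum over $k$ converges to $0$ almost surely, a fortiori in probability, which is the assertion.

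I expect the only genuinely delicate point to be the bookkeeping in the second paragraph: verifying that the three distinct threshold exponents ($3/2$, $5/2$, $2$) are exactly compensated by the $k/m^{3/2}$, $k^2/m^{5/2}$ and $k^{3/2}/m^2$ growth produced respectively by the maximum, the $\ell^1$ sum (of length at most $k$) and the $\ell^2$ sum, so that a single estimate serves all three detectors. A minor loose end is the boundary index $j=k-1$, for which $R_{j+1,k}=R_{k,k}$ is a one-observation remainder; this is absorbed once Condition~\ref{cond:remainders} is read over $\max_{1\le i\le j\le k}$ (and for the mean $R_{k,k}\equiv0$, so the point is moot).
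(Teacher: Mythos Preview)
Your proof is correct and follows essentially the same route as the paper's: both arguments use the reverse triangle inequalities to expose the remainder contributions, then control $j|R_{1,j}|$ and $(k-j)|R_{j+1,k}|$ by the quantity in Condition~\ref{cond:remainders}, and finally exploit $m^\eta k^{-\eta}\le 1$ together with the almost sure tail convergence $k^{-1/2}\rho_k\to 0$. The only cosmetic difference is packaging: you introduce $\rho_k$ and distil a single common bound $2\rho_k m^\eta/(\epsilon\,k^{1/2+\eta})$ for all three detectors, whereas the paper writes out the $R$-case bound (its $V_m$) and then observes that the $S$- and $T$-cases are dominated by $(k-m)/k$ and $\sqrt{(k-m)/k}$ times that same quantity.
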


\begin{remark}
  As mentioned in \cite{GosKleDet20}, the verification of Condition~\ref{cond:remainders} is highly non-trivial. When $\theta$ is the variance or a quantile of $F$, it was shown to hold in probability in Section~4 of \cite{DetGos19}. In the multivariate parameter and time series case, it was verified in \citet[Section~3.2]{GosKleDet20} for a time-dependent linear model. In a related way, note that an inspection of the proof of Proposition~\ref{prop:theta:IF} reveals that Condition~\ref{cond:remainders} could actually be replaced by the requirement that the remainders in~\eqref{eq:linear} satisfy
  $$
  \sup_{m+1 \leq k < \infty} k^{-1/2} \max_{1 \leq i < j \leq k}(j-i+1) | R_{i,j} |  = o_\Pr(1).
  $$
\end{remark}

\section{Data example}
\label{sec:illus}

As a small data example, we consider a fictitious scenario consisting of monitoring global temperature anomalies for changes in the mean. Specifically, we use the time series of monthly global (land and ocean) temperature anomalies available at \url{http://www.climate.gov/} which covers the period January 1880 -- May 2020. The time series in degrees Celsius is represented in the left panel of Figure~\ref{fig:temperatures}. The solid vertical line marks the beginning of the fictitious monitoring and corresponds to September 1921 (and thus to a learning sample of size $m=500$). Note that this monitoring scenario is indeed fully fictitious, among other things, because temperature anomalies are computed with respect to the 20th century average \citep[see, e.g.,][]{SmiRey08} and, therefore, the corresponding time series would not have been available until the beginning of the current century.

The solid curve in the right panel of Figure~\ref{fig:temperatures} displays the evolution of the normalized detector $\sigma_m^{-1} \sup_{m+1 \leq k < \infty} \{ w_T(k/m) \}^{-1}  T_m(k)$ with $\eta=0.001$ and $\gamma = 0.45$ against $k \geq m+1$. The solid (resp.\ dashed) horizontal line represents the estimated 0.95-quantile (resp.\ 0.99-quantile) of the corresponding limiting distribution in Theorem~\ref{thm:H0}. The solid vertical line represents the date at which the normalized detector exceeded the aforementioned 0.95-quantile and corresponds to November 1939. Note that to estimate the point of change corresponding to an exceedence at position $k$, it seems natural to use
$$
\mathrm{argmax}_{m \leq j \leq k-1} \frac{j (k-j)}{m^{3/2}}  | \bar X_{1:j} - \bar X_{j+1:k} | + 1.
$$
The date of change corresponding to an exceedence in November 1939 is thus estimated to be April 1925 and is marked by a dashed vertical line in the right panel of Figure~\ref{fig:temperatures}.

\begin{figure}[t!]
\begin{center}
  \includegraphics*[width=1\linewidth]{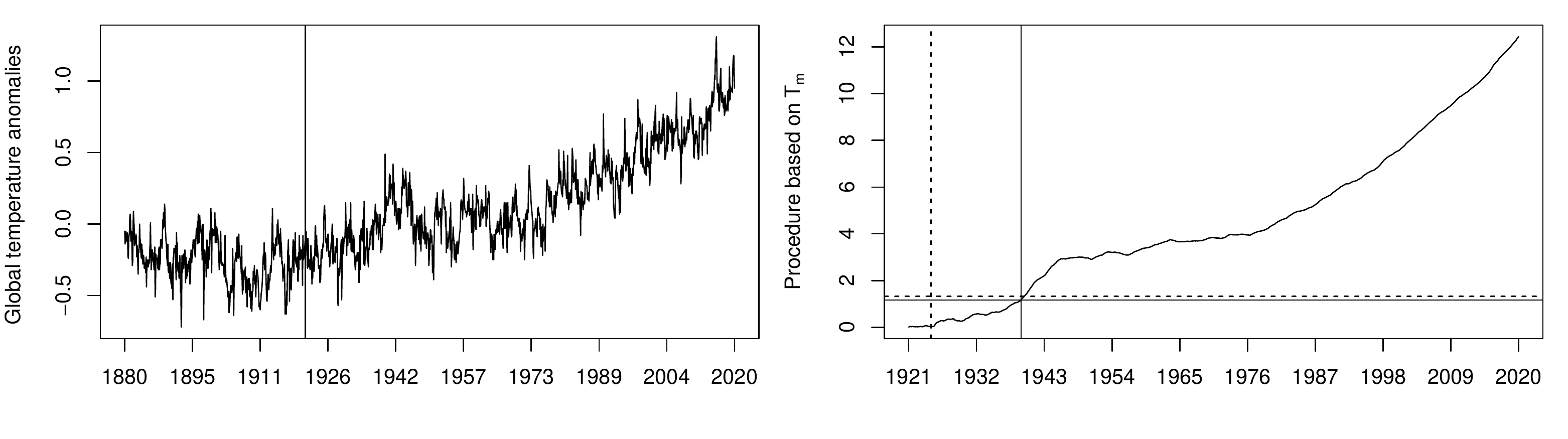}
  \caption{\label{fig:temperatures} Left: monthly global (land and ocean) temperature anomalies in degrees Celsius for the period January 1880 -- May 2020. The solid vertical line corresponds to September 1921 and marks the beginning of the fictitious monitoring. Right: the solid curve displays the normalized detector $\sigma_m^{-1} \sup_{m+1 \leq k < \infty} \{ w_T(k/m) \}^{-1}  T_m(k)$ with $\eta=0.001$ and $\gamma = 0.45$ against $k \geq m+1$. The solid (resp.\ dashed) horizontal line represents the estimated 0.95-quantile (resp.\ 0.99-quantile) of the corresponding limiting distribution in Theorem~\ref{thm:H0}. The solid (resp.\ dashed) vertical line corresponds to November 1939, the date of exceedence (resp.\ April 1925, the estimated date of change).}
\end{center}
\end{figure}

\section{Concluding remarks}

This work has demonstrated that it is relevant to define open-end sequential change-point tests such that the underlying detectors coincide with (or are related to) the retrospective CUSUM statistic at each monitoring step. From a practical perspective, when focusing on changes in the mean, such an approach was observed to lead to an increase in power with respect to existing competitors except when changes occur at the very beginning of the monitoring. Given the potentially very long term nature of open-end monitoring (by definition), it can be argued that having extra power for all but very early changes is strongly desirable. The price to pay for the additional power is a more complicated theoretical setting and the fact that quantiles of the underlying limiting distributions required to carry out the sequential tests in practice are harder to estimate. As far as monitoring for changes in other parameters than the mean is concerned, extensions are possible as long as the underlying estimators exhibit a mean-like asymptotic behavior as considered in \cite{GosKleDet20}.

\section*{Acknowledgments}

The authors would like to thank Ed Perkins for helpful discussions. MH is supported by Future Fellowship FT160100166 from the Australian Research Council.

%%%%%%%%%%%%%%%%%%%%%%%%%%%%%%%%%%%%%%%%%%%%%%%%%%%%%%%%%%%%%%%%%%%%%%%%%%%%%%

\begin{appendix}

\section{Proofs of the results under $H_0$}
\label{app:H0}

%%%%%%%%%%%%%%%%%%%%%%%%%%%%%%%%%%%%%%%%%%%%%%%%

The three following lemmas are used in the proof of Theorem~\ref{thm:H0}.

\begin{lem}
  \label{lem:ae:tilde}
Assume that Condition~\ref{cond:H0} holds and, for any $k \geq m+1$, let
\begin{align}
  \label{eq:tilde:Rm}
  \tilde R_m(k) &=  \sigma m^{-1/2} \max_{m \leq j \leq k-1} \left|  \frac{k}{m} \{ W_{m,2}(m) + W_{m,1}(j-m) \}  -  \frac{j}{m}  \{ W_{m,2}(m) + W_{m,1}(k-m) \} \right|, \\
  \label{eq:tilde:Sm}
  \tilde S_m(k) &=  \sigma m^{-1/2} \frac{1}{m} \sum_{j=m}^{k-1}  \left|  \frac{k}{m} \{ W_{m,2}(m) + W_{m,1}(j-m) \}  -  \frac{j}{m}  \{ W_{m,2}(m) + W_{m,1}(k-m) \} \right|, \\
  \label{eq:tilde:Tm}
  \tilde T_m(k) &=  \sigma m^{-1/2} \sqrt{ \frac{1}{m} \sum_{j=m}^{k-1}  \left[  \frac{k}{m} \{ W_{m,2}(m) + W_{m,1}(j-m) \}  -  \frac{j}{m}  \{ W_{m,2}(m) + W_{m,1}(k-m) \} \right]^2 }.
\end{align}
Then, for any fixed $\eta > 0$, $\epsilon > 0$ and $\gamma \geq 0$,
\begin{align}
  \label{eq:lem:tilde:R}
  \sup_{m+1 \leq k \leq \infty}  \{ w_R(k/m) \}^{-1} | R_m(k) - \tilde R_m(k) | &= o_\Pr(1), \\
  \label{eq:lem:tilde:S}
  \sup_{m+1 \leq k \leq \infty}  \{ w_S(k/m) \}^{-1} | S_m(k) - \tilde S_m(k) | &= o_\Pr(1), \\
  \label{eq:lem:tilde:T}
  \sup_{m+1 \leq k \leq \infty}  \{ w_T(k/m) \}^{-1} | T_m(k) - \tilde T_m(k) | &= o_\Pr(1),
\end{align}
where $w_R$, $w_S$ and $w_T$ are defined in~\eqref{eq:wR},~\eqref{eq:wS} and~\eqref{eq:wT}, respectively, and $R_m$, $S_m$ and $T_m$ are defined in~\eqref{eq:Rm},~\eqref{eq:Sm} and~\eqref{eq:Tm}, respectively.
\end{lem}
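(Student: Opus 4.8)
The plan is to reduce all three claims to a single uniform bound that decays like a negative power of $m$. The key algebraic fact is that, for $m\le j\le k$,
$$
\frac{j(k-j)}{m^{3/2}}\bigl(\bar X_{1:j}-\bar X_{j+1:k}\bigr)=m^{-1/2}\Bigl(\frac{k}{m}U_m(j)-\frac{j}{m}U_m(k)\Bigr),\qquad U_m(\ell):=\sum_{i=1}^{\ell}\{X_i-\Ex(X_1)\},
$$
which follows from $j(k-j)(\bar X_{1:j}-\bar X_{j+1:k})=k\sum_{i=1}^{j}X_i-j\sum_{i=1}^{k}X_i$ and the cancellation of $\Ex(X_1)$ in $kj\,\Ex(X_1)-jk\,\Ex(X_1)$ (this is where $H_0$ enters). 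Setting $G_m(j):=W_{m,2}(m)+W_{m,1}(j-m)$ for $j\ge m$, the two halves of Condition~\ref{cond:H0} furnish random variables $A_m,B_m=O_\Pr(1)$ with $|U_m(m)-\sigma W_{m,2}(m)|\le m^\xi B_m$ and $|U_m(j)-U_m(m)-\sigma W_{m,1}(j-m)|\le(j-m)^\xi A_m$ for all $j\ge m+1$, hence
$$
\max_{m\le j\le k}\bigl|U_m(j)-\sigma G_m(j)\bigr|\le k^\xi(A_m+B_m)\qquad\text{for every }k\ge m .
$$

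The second step passes from the $X$'s to the Brownian increments via a contraction inequality adapted to each detector: $\bigl|\,|a|-|b|\,\bigr|\le|a-b|$ together with $\bigl|\max_j a_j-\max_j b_j\bigr|\le\max_j|a_j-b_j|$ for $R_m$; the triangle inequality under the sum for $S_m$; and $\bigl|\,\|a\|-\|b\|\,\bigr|\le\|a-b\|$ with $\|a\|^2:=m^{-1}\sum_{j=m}^{k-1}a_j^2$ for $T_m$. Applying these together with the display above and the crude bounds $j\le k$, $(k-m)/m\le k/m$, one obtains, for all $k\ge m+1$,
$$
|R_m(k)-\tilde R_m(k)|\le 2(A_m+B_m)m^{-1/2}\frac{k}{m}k^\xi,\qquad |S_m(k)-\tilde S_m(k)|\le 2(A_m+B_m)m^{-1/2}\Bigl(\frac{k}{m}\Bigr)^{2}k^\xi ,
$$
$$
|T_m(k)-\tilde T_m(k)|\le 2(A_m+B_m)m^{-1/2}\Bigl(\frac{k}{m}\Bigr)^{3/2}k^\xi ,
$$
the extra factors $(k/m)$ and $(k/m)^{1/2}$ relative to $R_m$ being produced by the $(k-m)$ summands in $S_m$ and by the square root of such a sum for $T_m$.

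Finally, I would divide by the thresholds, using $w_\gamma\ge\epsilon$ to get $\{w_R(t)\}^{-1}\le\epsilon^{-1}t^{-3/2-\eta}$, $\{w_S(t)\}^{-1}\le\epsilon^{-1}t^{-5/2-\eta}$ and $\{w_T(t)\}^{-1}\le\epsilon^{-1}t^{-2-\eta}$. By design the powers of $k/m$ cancel exactly, and in all three cases one is left with
$$
\{w_\bullet(k/m)\}^{-1}\bigl|D_m(k)-\tilde D_m(k)\bigr|\le 2\epsilon^{-1}(A_m+B_m)\,m^{\eta}\,k^{\xi-1/2-\eta},\qquad\bullet\in\{R,S,T\} .
$$
Since $\xi<1/2$, the exponent $\xi-1/2-\eta$ is negative, so $k\mapsto k^{\xi-1/2-\eta}$ is decreasing on $[m+1,\infty)$ and is bounded above by $m^{\xi-1/2-\eta}$; this yields the $k$-free bound $2\epsilon^{-1}(A_m+B_m)m^{\xi-1/2}=o_\Pr(1)$, which is exactly~\eqref{eq:lem:tilde:R}--\eqref{eq:lem:tilde:T}.

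The one point that needs care is this last step: the supremum runs over the \emph{unbounded} range $m+1\le k<\infty$, so one must track the exponents precisely and use the strong-approximation rate $\xi<1/2$ from Condition~\ref{cond:H0} to ensure that the polynomial-in-$(k/m)$ factors generated by the sums in $S_m$ and $T_m$ are compensated exactly by the larger exponents $5/2+\eta$ and $2+\eta$ appearing in $w_S$ and $w_T$; once this bookkeeping is done, the residual $m^{\xi-1/2}$ gives the uniformity for free. (Note that $\eta>0$ is not actually needed for this lemma — $\xi<1/2$ already makes the exponent negative — but it is stated for consistency with Theorem~\ref{thm:H0}.)
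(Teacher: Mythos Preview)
Your proof is correct and follows essentially the same route as the paper: the same algebraic identity $j(k-j)(\bar X_{1:j}-\bar X_{j+1:k})=kU_m(j)-jU_m(k)$, the same reverse-triangle/contraction inequalities, and the same exponent bookkeeping yielding a residual of order $m^{\xi-1/2}$. The only organisational difference is that the paper splits the approximation error into four pieces $I_m,I_m',J_m,J_m'$ (treating the $W_{m,2}(m)$ and $W_{m,1}$ contributions separately) and then bounds the $S_m$ and $T_m$ discrepancies by $(k-m)/k$ and $\sqrt{(k-m)/k}$ times the $R_m$ bound, whereas you merge the two strong-approximation bounds into the single estimate $\max_{m\le j\le k}|U_m(j)-\sigma G_m(j)|\le k^{\xi}(A_m+B_m)$ and handle all three detectors in parallel --- a slightly cleaner packaging of the same argument.
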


\begin{proof}
  Let us first show~\eqref{eq:lem:tilde:R}. From~\eqref{eq:Rm} and~\eqref{eq:tilde:Rm}, using the reverse triangle inequality for the maximum norm, %using the fact that $\big|\max_j |a_j| - \max_j |b_j| \big| \leq \max_j |a_j - b_j|$,
  we have that, for any $k \geq m+1$, $\{ w_R(k/m) \}^{-1} | R_m(k) - \tilde R_m(k) | \leq U_m(k)$, where
\begin{multline}
  \label{eq:Um}
  U_m(k) = \epsilon^{-1} m^{-1/2} \left( \frac{k}{m} \right)^{-3/2-\eta}  \max_{m \leq j \leq k-1}  \left|  \frac{j (k-j)}{m}  \{ \bar X_{1:j} - \bar X_{j+1:k} \} \right. \\ \left. -  \frac{k}{m} \sigma \{ W_{m,2}(m) + W_{m,1}(j-m) \}  +  \frac{j}{m}  \sigma \{ W_{m,2}(m) + W_{m,1}(k-m) \} \right|,
\end{multline}
using the fact that $t \mapsto 1/w_\gamma(t)$ is bounded by $\epsilon^{-1}$. From~\eqref{eq:Xjk} and under Condition~\ref{cond:H0}, we obtain that, for any $k \geq m+1$ and $j \in \{m,\dots, k-1\}$,
\begin{align*}
  j (k-j) \{ \bar X_{1:j} - \bar X_{j+1:k} \} =& (k-j) \sum_{i=1}^j \{ X_i - \Ex(X_1) \} - j  \sum_{i=j+1}^k \{ X_i - \Ex(X_1) \} \\
                                              =& k \sum_{i=1}^j \{ X_i - \Ex(X_1) \} - j  \sum_{i=1}^k \{ X_i - \Ex(X_1) \} \\
  =& k \sum_{i=1}^m \{ X_i - \Ex(X_1) \} + k \sum_{i=m+1}^j \{ X_i - \Ex(X_1) \}  \\ &- j  \sum_{i=1}^m \{ X_i - \Ex(X_1) \} - j  \sum_{i=m+1}^k \{ X_i - \Ex(X_1) \}.
\end{align*}
Hence, by the triangle inequality, we have that
\begin{equation}
  \label{eq:sup:Um:conv}
\sup_{m+1 \leq k \leq \infty}  \{ w_R(k/m) \}^{-1} | R_m(k) - \tilde R_m(k) | \leq \sup_{m+1 \leq k \leq \infty} U_m(k) \leq \epsilon^{-1} ( I_m + I_m' + J_m + J_m' ),
\end{equation}
where
\begin{align}
  \nonumber %\label{eq:one}
  I_m = & m^{-1/2} \sup_{m+1 \leq k < \infty} \left(\frac{k}{m}\right)^{-1/2-\eta}  \max_{m \leq j \leq k-1}   \left| \sum_{i=1}^m \{ X_i - \Ex(X_1) \} - \sigma W_{m,2}(m) \right|, \\
  \nonumber %\label{eq:two}
  I_m' = & m^{-1/2} \sup_{m+1 \leq k < \infty} \left(\frac{k}{m}\right)^{-1/2-\eta}   \max_{m \leq j \leq k-1}  \left| \sum_{i=m+1}^j \{ X_i - \Ex(X_1) \} - \sigma W_{m,1}(j-m) \right|, \\
  \label{eq:three}
  J_m = & m^{-1/2} \sup_{m+1 \leq k < \infty} \left(\frac{k}{m}\right)^{-3/2-\eta}  \max_{m \leq j \leq k-1}  \frac{j}{m} \left| \sum_{i=1}^m \{ X_i - \Ex(X_1) \} - \sigma W_{m,2}(m) \right|, \\
  \label{eq:four}
  J_m' = &m^{-1/2} \sup_{m+1 \leq k < \infty} \left(\frac{k}{m}\right)^{-3/2-\eta} \max_{m \leq j \leq k-1} \frac{j}{m}  \left| \sum_{i=m+1}^k \{ X_i - \Ex(X_1) \} - \sigma W_{m,1}(k-m) \right|.
\end{align}
To prove~\eqref{eq:lem:tilde:R}, it remains to show that $I_m$, $I_m'$, $J_m$ and $J_m'$ converge to zero in probability. For any $\xi \in (0,1/2)$, % such that Condition~\ref{cond:H0} holds,
$$
I_m = \frac{1}{m^\xi} \left| \sum_{i=1}^m \{ X_i - \Ex(X_1) \} - W_{m,2}(m) \right| m^{\xi + \eta} \sup_{m+1 \leq k < \infty} k^{-1/2 - \eta},
$$
which, under Condition~\ref{cond:H0}, is smaller than a term bounded in probability times $m^{\xi - 1/2}$, which converges to zero because $\xi < 1/2$. Similarly,
\begin{multline*}
I_m' \leq m^{\eta} \sup_{m+1 \leq k < \infty} k^{-1/2-\eta} \max_{m \leq j \leq k-1}   (j-m)^\xi \\ \times \sup_{m+1 \leq \ell < \infty} \frac{1}{(\ell-m)^\xi} \left| \sum_{i=m+1}^\ell \{ X_i - \Ex(X_1) \} - \sigma W_{m,1}(\ell-m) \right|.
\end{multline*}
Using again Condition~\ref{cond:H0}, the latter converges to zero in probability since
\begin{align*}
  m^{\eta} \sup_{m+1 \leq k < \infty} k^{- 1/2 - \eta}  \max_{m \leq j \leq k-1}  (j-m)^\xi &\leq m^{\eta} \sup_{m+1 \leq k < \infty} k^{-1/2 - \eta + \xi} \leq  m^{\xi -1/2} \to 0.
\end{align*}
The fact that $J_m$ in~\eqref{eq:three} and $J_m'$ in~\eqref{eq:four} converge to zero in probability can be checked by proceeding similarly. Hence, we have shown~\eqref{eq:lem:tilde:R}. It remains to prove~\eqref{eq:lem:tilde:S} and~\eqref{eq:lem:tilde:T}.

From~\eqref{eq:Sm} and~\eqref{eq:tilde:Sm}, using the reverse triangle inequality for the $L_1$ norm, it can be verified that, for any $k \geq m+1$,
\begin{multline*}
\{ w_S(k/m) \}^{-1} | S_m(k) - \tilde S_m(k) | \leq \epsilon^{-1} m^{-1/2} \left(\frac{k}{m}\right)^{-5/2-\eta} \frac{1}{m} \sum_{j=m}^{k-1} \left|  \frac{j (k-j)}{m}  \{ \bar X_{1:j} - \bar X_{j+1:k} \} \right. \\ \left. -  \frac{k}{m} \sigma \{ W_{m,2}(m) + W_{m,1}(j-m) \}  +  \frac{j}{m}  \sigma \{ W_{m,2}(m) + W_{m,1}(k-m) \} \right| \leq \frac{k-m}{k} U_m(k),
\end{multline*}
where $U_m(k)$ is defined in~\eqref{eq:Um}. Similarly, from~\eqref{eq:Tm} and~\eqref{eq:tilde:Tm}, using the reverse triangle inequality for the Euclidean norm,
\begin{multline*}
\{ w_T(k/m) \}^{-1} | T_m(k) - \tilde T_m(k) | \leq \epsilon^{-1} m^{-1/2} \left(\frac{k}{m}\right)^{-2-\eta}  \left( \frac{1}{m} \sum_{j=m}^{k-1} \left[  \frac{j (k-j)}{m}  \{ \bar X_{1:j} - \bar X_{j+1:k} \} \right. \right. \\ \left. \left. -  \frac{k}{m} \sigma \{ W_{m,2}(m) + W_{m,1}(j-m) \}  +  \frac{j}{m}  \sigma \{ W_{m,2}(m) + W_{m,1}(k-m) \} \right]^2 \right)^{1/2} \leq \sqrt{\frac{k-m}{k}} U_m(k).
\end{multline*}
The fact that~\eqref{eq:lem:tilde:S} and~\eqref{eq:lem:tilde:T} hold then immediately follows from the two previous displays,~\eqref{eq:sup:Um:conv} and the fact that $I_m$, $I_m'$, $J_m$ and $J_m'$ converge to zero in probability.
\end{proof}

%%%%%%%%%%%%%%%%%%%%%%%%%%%%%%%%%%%%%%%%%%%%%%%%%%%%%%%%%%%%%%%%%%%%%%%%%

\begin{lem}
\label{lem:unif:cont:R:eta}
For any fixed $\eta>0$, the random function $R_\eta$ defined by
\begin{equation}
  \label{eq:R:eta}
R_\eta(s,t) = \frac{1}{t^{3/2+\eta}} | (t-s) W_2(1) + t W_1(s-1) - s W_1(t-1) |, \qquad 1 \leq s \leq t<\infty,
\end{equation}
where $W_1$ and $W_2$ are independent standard Brownian motions, is almost surely bounded and uniformly continuous.
% on $\{(s,t)\in [1,\infty)^2: s \le t\}$.
\end{lem}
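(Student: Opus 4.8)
The plan is to separate a compact range of $t$ from the tail $t\to\infty$; all the delicate behavior lives in the tail and is controlled by the law of the iterated logarithm for Brownian motion. Throughout, work on the probability-one event on which $W_1$ has continuous sample paths and $W_2(1)$ is finite, and abbreviate $N=W_2(1)$. On this event $R_\eta$ is a genuinely continuous function of $(s,t)$ on $\{1\le s\le t<\infty\}$, being a product and composition of continuous functions with the denominator $t^{3/2+\eta}\ge 1$ bounded away from zero. In particular, $R_\eta$ is bounded and uniformly continuous on every compact set of the form $\{1\le s\le t\le T\}$, so only the behavior for large $t$ requires work.

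The central estimate is that $\sup_{1\le s\le t}R_\eta(s,t)\to 0$ almost surely as $t\to\infty$. Writing $M(u)=\sup_{0\le v\le u}|W_1(v)|$ and using $s\le t$ together with the triangle inequality,
$$
R_\eta(s,t)\;\le\;\frac{(t-s)|N|+t\,|W_1(s-1)|+s\,|W_1(t-1)|}{t^{3/2+\eta}}\;\le\;\frac{|N|+2\,M(t-1)}{t^{1/2+\eta}}\,.
$$
By the law of the iterated logarithm, $M(u)=O\big(\sqrt{u\log\log u}\big)$ almost surely as $u\to\infty$, so the right-hand side is $O\big(\sqrt{\log\log t}\,/\,t^{\eta}\big)\to 0$. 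This is precisely the place where $\eta>0$ enters, consistently with Proposition~\ref{prop:R0:infinite}.

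Boundedness on the whole domain is then immediate: $R_\eta$ is bounded on $\{1\le s\le t\le T\}$ for every $T$, while the display above shows $\sup_{t>T}\sup_{1\le s\le t}R_\eta(s,t)$ is finite once $T$ is large. For uniform continuity, fix $\eps>0$, use the central estimate to pick $T_0$ with $R_\eta(s,t)\le\eps/2$ for all $t\ge T_0$, and let $\delta_1$ be a modulus of uniform continuity of $R_\eta$ on the compact set $K=\{1\le s\le t\le T_0+1\}$ for tolerance $\eps$; set $\delta=\min(\delta_1,1)$. Given two points of the domain within distance $\delta$: if both have $t$-coordinate at least $T_0$, each $R_\eta$-value is at most $\eps/2$ and the two differ by at most $\eps$; otherwise, the two points being within distance $\delta\le 1$ forces both $t$-coordinates below $T_0+1$, so both points lie in $K$ and $\delta_1$ yields the bound. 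Hence $R_\eta$ is uniformly continuous.

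The only step that is not entirely routine is the tail estimate of the second paragraph---in particular the bound on $t\,|W_1(s-1)|$ uniform over $s\in[1,t]$, which forces one to invoke an iterated-logarithm bound on the running maximum $M$ rather than a pointwise bound on $W_1$---together with the minor bookkeeping of gluing compact-set uniform continuity to the tail decay for pairs of points whose $t$-coordinates straddle $T_0$. Everything else follows directly from continuity of Brownian sample paths.
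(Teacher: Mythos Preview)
Your proof is correct and follows the same overall template as the paper's---bound on a compact set, tail decay via the law of the iterated logarithm, then glue---but you streamline the argument in one genuinely useful way. By bounding $|W_1(s-1)|$ via the running maximum $M(t-1)=\sup_{0\le v\le t-1}|W_1(v)|$, you obtain an upper bound $(|N|+2M(t-1))/t^{1/2+\eta}$ for $R_\eta(s,t)$ that depends on $t$ alone. The paper instead controls $t^{-1/2-\eta}|W_1(t-1)|$ and $s^{-1/2-\eta}|W_1(s-1)|$ separately with the pointwise LIL, which does not give a uniform-in-$s$ bound when $s$ is small and $t$ is large. As a result, the paper's uniform-continuity argument needs a three-region decomposition of $\{1\le s\le t\}$ (large $s$; bounded $t$; small $s$ with very large $t$), each treated with its own inequality, whereas your uniform-in-$s$ tail decay permits the clean two-case split you give (both $t$-coordinates at least $T_0$, versus both points in the compact set $\{t\le T_0+1\}$). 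Your route is shorter; the only extra ingredient is the (standard) observation that the LIL for $W_1$ transfers to $M$, which you note.
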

\begin{proof}
Fix $\eta > 0$ and let us first verify that $R_\eta$ is almost surely bounded on $A = \{(s,t)\in [1,\infty)^2: s \le t\}$. By the triangle inequality, $\sup_{1 \leq s \leq t <\infty} R_\eta(s,t)$ is smaller than
\begin{align*}
  \sup_{1 \leq s \leq t <\infty} &t^{-3/2 - \eta}  (t-s) | W_2(1) | + \sup_{1 \leq s \leq t <\infty} t^{-1/2 - \eta} | W_1(s-1) |  +  \sup_{1 \leq s \leq t <\infty} t^{-3/2 - \eta} s | W_1(t-1) | \\
                                 &\leq | W_2(1) | \sup_{1 \leq s \leq t <\infty} t^{-1/2 - \eta} + \sup_{1 \leq s < \infty} s^{-1/2 - \eta} | W_1(s-1) |  +   \sup_{1 \leq t <\infty} t^{-1/2 - \eta} | W_1(t-1) | \\
                                 &\leq | W_2(1) |  + 2 \sup_{1 \leq t <\infty} t^{-1/2 - \eta} | W_1(t-1) |.
\end{align*}
It thus remains to verify that $\sup_{1 \leq t <\infty} t^{-1/2 - \eta} | W_1(t-1) |$ is almost surely bounded. From the law of the iterated logarithm for Brownian motion, we have that, almost surely,
\begin{align*}
  \limsup_{t \to \infty} & \, t^{-1/2 - \eta} | W_1(t-1) | =  \lim_{T \to \infty} \sup_{T \leq t < \infty} t^{-1/2 - \eta} | W_1(t-1) | =  \lim_{T \to \infty} \sup_{T \leq t < \infty} (t+1)^{-1/2 - \eta} | W_1(t) | \\ &\leq \lim_{T \to \infty} \sup_{T \leq t < \infty} (2 t \log \log t)^{-1/2} | W_1(t) |  \times \lim_{T \to \infty} \sup_{T \leq t < \infty}(2 t \log \log t)^{1/2} (t+1)^{-1/2 - \eta} \\  &= 1 \times \lim_{t \to \infty} (2 t \log \log t)^{1/2} (t+1)^{-1/2-\eta} = 0.
\end{align*}
Hence, on one hand, there exists $T \in (1,\infty)$ such that $\sup_{T \leq t < \infty} t^{-1/2 - \eta} | W_1(t-1) | \leq 1$ almost surely. On the other hand, since $t \mapsto t^{-1/2 - \eta} | W_1(t-1) |$ is a process whose sample paths are almost surely continuous,  $\sup_{1 \leq t  \leq T} < t^{-1/2 - \eta} | W_1(t-1) | < \infty$ with probability one.

It remains to prove that $R_\eta$ is almost surely uniformly continuous on $A = \{(s,t)\in [1,\infty)^2: s \le t\}$. Let $\eps > 0$. We must show that there exists some (random) $\delta>0$ such that, for all $(s,t),(s',t') \in A$ such that $d((s,t),(s',t')) < \delta$, $|R_\eta(s,t)-R_\eta(s',t')| \le \eps$.

By the law of the iterated logarithm, there exists $T_\eps\ge 2$ random such that
\begin{equation}
  \label{eq:hi1}
  \sup_{u\ge T_\eps}\frac{|W_1(u)|}{u^{1/2+\eta}}\le \frac{\eps}{8},
\end{equation}
and since $W_2(1)$ is almost surely finite we can choose $T_\eps$ to also satisfy 
\begin{equation}
  \label{eq:hi2}
  \frac{|W_2(1)|}{(T_\eps)^{1/2+\eta}}\le \frac{\eps}{8}.
\end{equation}
Since $[1,T_\eps+3]$ is compact, there exists $T'_\eps>T_\eps+5$ (also random) such that
\begin{align}
  \label{eq:hi3}
\sup_{u\in [1, T_\eps+3]}\frac{|W_1(u)|}{(T'_\eps)^{1/2+\eta}}<\frac{\eps}{8}. 
\end{align}
We will consider the following three subsets of $A$ whose union is $A$:
\begin{align*}
A_0&=\{(s,t)\in A: s\ge T_\eps+2\},\\
A_1&=\{(s,t)\in A: t\le T'_\eps+1\},\\
A_2&=\{(s,t)\in A: s\le T_\eps+2, t\ge T'_\eps+1\},
\end{align*}
and find a single $\delta \le 1$ that works no matter which of the above $(s,t)$ is in.

Let $(s,t)\in A_0$, and let $(s',t')$ be such that $d((s,t),(s',t'))\le 1$ (so $s'\ge T_\eps+1$).  Then
\begin{align}
|R_\eta(s,t) - R_\eta(s',t')|&\le \frac{1}{t^{3/2+\eta}}\left|(t-s)W_2(1)+tW_1(s-1)-sW_1(t-1)\right|\label{eq:bye1}\\
                             &\quad +\frac{1}{t'^{3/2+\eta}}\left|(t'-s')W_2(1)+t'W_1(s'-1)-s'W_1(t'-1)\right|\label{eq:bye2}\\
  \nonumber
&\le \frac{2|W_2(1)|}{(T_\eps)^{1/2+\eta}}+\frac{|W_1(s-1)|}{(s-1)^{1/2+\eta}}+\frac{|W_1(s'-1)|}{(s'-1)^{1/2+\eta}}+\frac{|W_1(t-1)|}{(t-1)^{1/2+\eta}}+\frac{|W_1(t'-1)|}{(t'-1)^{1/2+\eta}}.
\end{align}
By~\eqref{eq:hi1} and~\eqref{eq:hi2}, this is at most $6\eps/8$.

Let $(s,t)\in A_2$ and $(s',t')$ be such that $d((s,t),(s',t'))\le 1$ (so $s'\le T_\eps+3$).  Then, using~\eqref{eq:bye1} and~\eqref{eq:bye2} again, we get that
\begin{align*}
|R_\eta(s,t) - R_\eta(s',t')|&\le \frac{2|W_2(1)|}{(T'_\eps)^{1/2+\eta}}+\frac{|W_1(s-1)|}{(T'_\eps)^{1/2+\eta}}+\frac{|W_1(s'-1)|}{(T'_\eps)^{1/2+\eta}}+\frac{|W_1(t-1)|}{(t-1)^{1/2+\eta}}+\frac{|W_1(t'-1)|}{(t'-1)^{1/2+\eta}}.
\end{align*}
Using~\eqref{eq:hi1},~\eqref{eq:hi3} and~\eqref{eq:hi2}, we get that this is at most $6\eps/8$.

Finally, suppose that $(s,t)\in A_1$.  Since the set $A_1' = \{(s,t)\in A:t \le T'_\eps+2\}$ (note that $A_1'\supset A_1$) is compact and $R_\eta$ is continuous, it follows that $R_\eta$ is uniformly continuous on $A'_1$, so there exists some (random) $\delta_0>0$ such that whenever $(s,t),(s',t')\in A'_1$ and $d((s,t),(s',t'))\le \delta_0$, then we have  $|R_\eta(s,t)-R_\eta(s',t')|\le \eps/2$.  Note that if $(s,t) \in A_1$ and $d((s,t),(s',t'))\le 1$, then $(s,t)$ and $(s',t')$ are both in $A'_1$.

Let $\delta=\min(\delta_0,1)$.  Then, whenever $d((s,t),(s',t'))\le \delta$, we have that $(s,t)$ is in (at least) one of $A_0,A_1,A_2$, so by the above $|R_\eta(s,t) - R_\eta(s',t')|< \eps$. 
\end{proof}

%%%%%%%%%%%%%%%%%%%%%%%%%%%%%%%%%%%%%%%%%%%%%%%%%%%%%%%%%%%%%%%%%%%%%%%%%%

\begin{lem}
  \label{lem:wc:tilde}
For any fixed $\eta > 0$, $\epsilon > 0$ and $\gamma \geq 0$,
\begin{align}
  \label{eq:wc:R}
  \sup_{m+1 \leq k < \infty}  \{ w_R(k/m) \}^{-1} \tilde R_m(k) &\leadsto \sigma \sup_{1 \leq s \leq t <\infty} \{ w_\gamma(t) \}^{-1} R_\eta(s,t), \\
  \label{eq:wc:S}
  \sup_{m+1 \leq k < \infty} \{ w_S(k/m) \}^{-1}  \tilde S_m(k) &\leadsto  \sigma \sup_{1 \leq t <\infty}  \{ w_\gamma(t) \}^{-1} t^{-1}  \int_1^t |  R_\eta(s,t) | \dd s, \\
  \label{eq:wc:T}
\sup_{m+1 \leq k < \infty} \{ w_T(k/m) \}^{-1}  \tilde T_m(k) &\leadsto  \sigma \sup_{1 \leq t <\infty} \{ w_\gamma(t) \}^{-1} t^{-1/2} \sqrt{ \int_1^t \{ R_\eta(s,t) \}^2 \dd s},
\end{align}
where $w_R$, $w_S$ and $w_T$ are defined in~\eqref{eq:wR},~\eqref{eq:wS} and~\eqref{eq:wT}, respectively, $\tilde R_m$, $\tilde S_m$ and $\tilde T_m$ are defined in~\eqref{eq:tilde:Rm},~\eqref{eq:tilde:Sm} and~\eqref{eq:tilde:Tm}, respectively, $w_\gamma$ is defined in~\eqref{eq:w:gamma} and the random function $R_\eta$ is defined in~\eqref{eq:R:eta}. In addition, all the limiting random variables are almost surely finite.
\end{lem}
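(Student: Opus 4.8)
The plan is to turn each of the three weak-convergence statements into an almost sure statement by a Brownian scaling argument, and then to pass to the limit using the boundedness and uniform continuity of the random function $R_\eta$ provided by Lemma~\ref{lem:unif:cont:R:eta}.

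The first step is a deterministic rewriting of the tilde processes in ``rescaled time''. Fix $m$; for $k\ge m+1$ set $t=k/m$ and for $m\le j\le k-1$ set $s=j/m$, so that $j-m=m(s-1)$, $k-m=m(t-1)$, and let $\omega_1(u)=m^{-1/2}W_{m,1}(mu)$ and $\omega_2(1)=m^{-1/2}W_{m,2}(m)$. Since $w_R(t)=t^{3/2+\eta}w_\gamma(t)$, substituting into~\eqref{eq:tilde:Rm} and carrying the factor $m^{-1/2}$ inside the absolute value gives
\[
\{w_R(k/m)\}^{-1}\tilde R_m(k)=\sigma\,w_\gamma(t)^{-1}\max_{s}\,\frac{1}{t^{3/2+\eta}}\bigl|(t-s)\omega_2(1)+t\,\omega_1(s-1)-s\,\omega_1(t-1)\bigr|,
\]
the maximum being over $s\in\{1,1+1/m,\dots,t-1/m\}$. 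By the scaling property of Brownian motion and the independence of $W_{m,1}$ and $W_{m,2}$ (Condition~\ref{cond:H0}), the pair $(\omega_1,\omega_2(1))$ has exactly the law of a pair $(W_1,W_2(1))$ with $W_1,W_2$ independent standard Brownian motions; hence, with $R_\eta$ the function in~\eqref{eq:R:eta} built from $(W_1,W_2)$,
\[
\sup_{m+1\le k<\infty}\{w_R(k/m)\}^{-1}\tilde R_m(k)\ \overset{d}{=}\ \sigma\sup_{(s,t)\in G_m}w_\gamma(t)^{-1}R_\eta(s,t),\qquad G_m=\bigl\{(s,t)\in (m^{-1}\Z)^2:\ 1\le s\le t-1/m\bigr\}.
\]
Applying the same substitution to~\eqref{eq:tilde:Sm}--\eqref{eq:tilde:Tm} and using~\eqref{eq:wS}--\eqref{eq:wT}, the factor $m^{-1}\sum_{j=m}^{k-1}$ becomes the Riemann sum $m^{-1}\sum_{s}$ over the same $s$-grid, and the powers of $t$ combine so that, in distribution,
\begin{align*}
\sup_{m+1\le k<\infty}\{w_S(k/m)\}^{-1}\tilde S_m(k)&\overset{d}{=}\sigma\sup_{t}\frac{1}{w_\gamma(t)\,t}\cdot\frac1m\sum_{s}R_\eta(s,t),\\
\sup_{m+1\le k<\infty}\{w_T(k/m)\}^{-1}\tilde T_m(k)&\overset{d}{=}\sigma\sup_{t}\frac{1}{w_\gamma(t)\,t^{1/2}}\Bigl(\frac1m\sum_{s}R_\eta(s,t)^2\Bigr)^{1/2},
\end{align*}
the outer suprema running over $t\in m^{-1}\Z$ with $t\ge(m+1)/m$ and, for each such $t$, over the same $s$-grid.

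The second step is to let $m\to\infty$ on these right-hand sides, which now live on a single probability space. The map $t\mapsto 1/w_\gamma(t)=\min\{(t/(t-1))^\gamma,\epsilon^{-1}\}$ is continuous on $[1,\infty)$, bounded by $\epsilon^{-1}$, and tends to $1$ at infinity, hence is uniformly continuous; by Lemma~\ref{lem:unif:cont:R:eta}, $R_\eta$ is almost surely bounded and uniformly continuous on $A=\{1\le s\le t<\infty\}$, and therefore so are $R_\eta^2$ and the product $w_\gamma(t)^{-1}R_\eta(s,t)$. Since the grids $G_m$ become dense in $A$ (and the $t$-grids dense in $[1,\infty)$), the standard ``dense grid plus bounded uniformly continuous function'' argument — taking, if necessary, the grid point just below a near-maximiser in each coordinate to remain in the admissible region — gives $\sup_{G_m}w_\gamma(t)^{-1}R_\eta\to\sup_{A}w_\gamma(t)^{-1}R_\eta$ almost surely, hence in distribution; together with the distributional identity above this is~\eqref{eq:wc:R}. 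For~\eqref{eq:wc:S} and~\eqref{eq:wc:T} one additionally replaces $m^{-1}\sum_{s}R_\eta(s,t)$ (resp.\ $m^{-1}\sum_{s}R_\eta(s,t)^2$) by $\int_1^t R_\eta(s,t)\,\dd s$ (resp.\ $\int_1^t R_\eta(s,t)^2\,\dd s$): the error of this one-dimensional Riemann approximation on $[1,t]$ with mesh $1/m$ is at most $(t-1)\,\omega(1/m)+O(1/m)$, where $\omega$ is the (random, almost surely finite) modulus of continuity of $R_\eta$ resp.\ $R_\eta^2$ on $A$, and after division by $t$ (resp.\ by $t$ inside the square root, using $|\sqrt a-\sqrt b|\le\sqrt{|a-b|}$) this becomes $O(\omega(1/m))+O(1/m)$ uniformly in $t\ge 1$; combining this uniform bound with the density of the $t$-grid and the continuity of the limiting functions yields~\eqref{eq:wc:S} and~\eqref{eq:wc:T}. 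Almost sure finiteness of the three limits is then immediate, since $1/w_\gamma\le\epsilon^{-1}$, $\int_1^t R_\eta(s,t)^2\,\dd s\le(t-1)\sup_A R_\eta^2$ so that $t^{-1/2}$ times the square root is at most $\sup_A R_\eta$, and $\sup_A R_\eta<\infty$ almost surely by Lemma~\ref{lem:unif:cont:R:eta}.

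The step I expect to be the main obstacle is the uniform control of the Riemann sums over the unbounded range $t\in[1,\infty)$ needed for~\eqref{eq:wc:S} and~\eqref{eq:wc:T}: both $\int_1^t R_\eta(s,t)\,\dd s$ and $\int_1^t R_\eta(s,t)^2\,\dd s$ grow linearly in $t$, and the point is that this growth is compensated exactly by the normalisations $t^{-1}$ and $t^{-1/2}$ built into $w_S$ and $w_T$, so that the approximation error stays uniformly small. Getting this bookkeeping right — together with the minor boundary effects from the last sub-interval of the grid not reaching $t$ and from the behaviour near $s=t=1$ — is the only genuinely delicate part; everything else reduces to the scaling identity of the first step and the boundedness/uniform continuity supplied by Lemma~\ref{lem:unif:cont:R:eta}.
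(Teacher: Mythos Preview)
Your proposal is correct and follows essentially the same approach as the paper: a Brownian-scaling distributional identity that places everything on a single probability space, followed by almost sure convergence using the boundedness and uniform continuity of $R_\eta$ from Lemma~\ref{lem:unif:cont:R:eta} together with the boundedness and uniform continuity of $t\mapsto 1/w_\gamma(t)$. The only presentational difference is that the paper rewrites the discrete suprema as continuous suprema with floor functions (so that pre-limit and limit are suprema over the same set and one bounds the difference of the integrands directly via an $I_m+J_m$ split, using Minkowski's inequality for the $T_m$ case), whereas you keep the grid suprema and handle the Riemann approximation and the grid-to-continuum passage separately; both routes are valid and rely on the same ingredients.
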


\begin{proof}
Fix $\eta > 0$, $\epsilon > 0$ and $\gamma \geq 0$ and let $W_1$ and $W_2$ be independent standard Brownian motions. Then, from~\eqref{eq:tilde:Rm}, for any $m \in \N$, $\sup_{m+1 \leq k < \infty}  \{ w_R(k/m) \}^{-1} \tilde R_m(k)$ is equal in distribution to
\begin{multline*}
\sigma m^{-1/2} \sup_{m+1 \leq k < \infty}  \{ w_R(k/m) \}^{-1}  \max_{m \leq j \leq k-1} \left|  \frac{k}{m} \{ W_2(m) + W_1(j-m) \} \right. \\ \left. -  \frac{j}{m}  \{ W_2(m) + W_1(k-m) \} \right|.
\end{multline*}
Next, notice that, for any $k \geq m+1$ and any $j \in \{m,\dots,k-1\}$, there exists $1 \leq s \leq t$ such that $k = \ip{mt}$ and $j = \ip{ms}$. Hence, the previous expression can be rewritten as
\begin{multline*}
  \sigma m^{-1/2} \sup_{t \in [1, \infty)}  \{ w_R(\ip{mt}/m) \}^{-1}  \sup_{s \in [1,t]} \left|  \frac{\ip{mt}}{m} \{ W_2(m) + W_1(\ip{ms}-m) \}  \right. \\ \left. -  \frac{\ip{ms}}{m}  \{ W_2(m) + W_1(\ip{mt}-m) \} \right|.
\end{multline*}
By Brownian scaling, the latter is equal in distribution to
\begin{multline*}
\sigma \sup_{1 \leq s \leq t <\infty}  \{ w_R(\ip{mt}/m) \}^{-1}  \left|  \frac{\ip{mt}}{m} \left\{ W_2(1) + W_1\left(\frac{\ip{ms}}{m}-1\right) \right\}  \right. \\ \left. -  \frac{\ip{ms}}{m}  \left\{ W_2(1) + W_1\left(\frac{\ip{mt}}{m}-1\right) \right\} \right|,
\end{multline*}
which, using~\eqref{eq:wR},~\eqref{eq:w:gamma} and the function $R_\eta$ defined in~\eqref{eq:R:eta}, can be expressed as
$$
\sigma  \sup_{1 \leq s \leq t <\infty}   \{ w_\gamma(\ip{mt}/m) \}^{-1} R_\eta(\ip{ms}/m, \ip{mt}/m).
$$
Using additionally the fact that, for any functions $f, g$,
\begin{equation}
  \label{eq:ineq:fg}
  \big| \sup_x | f(x) | - \sup_x | g(x)| \big| \leq  \sup_x | f(x) - g(x) |,  
\end{equation}
we obtain that
\begin{align}
  \nonumber
  \Big|  \sup_{1 \leq s \leq t <\infty}  &\{ w_\gamma(\ip{mt}/m) \}^{-1} R_\eta(\ip{ms}/m, \ip{mt}/m) -  \sup_{1 \leq s \leq t <\infty} \{ w_\gamma(t) \}^{-1} R_\eta(s,t) \Big| \\
  \nonumber
                                         &\leq \sup_{1 \leq s \leq t <\infty}  \Big| \{ w_\gamma(\ip{mt}/m) \}^{-1} R_\eta(\ip{ms}/m, \ip{mt}/m) - \{ w_\gamma(t) \}^{-1} R_\eta(s,t) \Big| \\
  \label{eq:modulus:cont}
                                         &\leq   \sup_{1 \leq s \leq t <\infty, 1 \leq s' \leq t' <\infty \atop |s - s'| \leq 1/m, |t - t'| \leq 1/m}  \Big| \{ w_\gamma(t') \}^{-1} R_\eta(s', t') - \{ w_\gamma(t) \}^{-1} R_\eta(s,t) \Big|,
\end{align}
%for any $t \in [1, \infty)$, $mt - 1 \leq \ip{mt} \leq mt$, which implies that
%$t - 1/m \leq \ip{mt} / m \leq t$, and thus that
since $\sup_{t \in [1, \infty)} | \ip{mt}/m - t | \leq 1/m$. 

From Lemma~\ref{lem:unif:cont:R:eta}, we know that $R_\eta$ is almost surely bounded and uniformly continuous on $\{(s,t)\in [1,\infty)^2: s\le t\}$. Furthermore, the function $t \mapsto 1/w_\gamma(t)$ being bounded, continuous and converging to zero as $t \to \infty$, it is also uniformly continuous on $[1,\infty)$. The latter facts imply that the function $(s,t) \mapsto \{ w_\gamma(t) \}^{-1} R_\eta(s,t)$ is almost surely bounded and uniformly continuous on $\{(s,t)\in [1,\infty)^2: s\le t\}$ and, therefore, that~\eqref{eq:modulus:cont} converges almost surely to zero and, finally, that~\eqref{eq:wc:R} holds with the limit being almost surely finite.

Let us now show~\eqref{eq:wc:S}. First, notice that the limiting random variable is almost surely finite as an immediate consequence of the inequality
$$
\sup_{1 \leq t <\infty} t^{-1}  \int_1^t |  R_\eta(s,t) | \dd s \leq \sup_{1 \leq s \leq t <\infty} R_\eta(s,t).
$$
Then, from~\eqref{eq:tilde:Sm}, for any $m \in \N$, $\sup_{m+1 \leq k < \infty}  \{ w_S(k/m) \}^{-1} \tilde S_m(k)$ is equal in distribution to
$$
\sigma m^{-1/2} \sup_{m+1 \leq k < \infty}  \{ w_S(k/m) \}^{-1}  \frac{1}{m} \sum_{j=m}^{k-1}  \left|  \frac{k}{m} \{ W_2(m) + W_1(j-m) \}  -  \frac{j}{m}  \{ W_2(m) + W_1(k-m) \} \right|.
$$
Using Brownian scaling, the latter is equal in distribution to
\begin{align*}
  \sigma   \sup_{m+1 \leq k < \infty}&  \{ w_S(k/m) \}^{-1} \frac{1}{m} \sum_{j=m}^{k-1}  \left|  \frac{k}{m} \{ W_2(1) + W_1(j/m-1) \}  -  \frac{j}{m}  \{ W_2(1) + W_1(k/m-1) \} \right| \\
  = & \sigma   \sup_{m+1 \leq \ip{mt} < \infty}  \left\{ w_S\left(\frac{\ip{mt}}{m}\right) \right\}^{-1} \\ & \times \frac{1}{m} \sum_{j=m}^{\ip{mt}-1}  \left|  \frac{\ip{mt}}{m} \left\{ W_2(1) + W_1\left(\frac{j}{m}-1\right) \right\}  -  \frac{j}{m}  \left\{ W_2(1) + W_1\left(\frac{\ip{mt}}{m}-1\right) \right\} \right| \\
        =& \sigma \sup_{1 \leq t <\infty}  \left\{ w_\gamma\left(\frac{\ip{mt}}{m}\right) \right\}^{-1}  \left(\frac{\ip{mt}}{m}\right)^{-5/2 - \eta} \\ & \times \int_1^t \left|  \frac{\ip{mt}}{m} \left\{ W_2(1) + W_1\left(\frac{\ip{ms}}{m}-1\right) \right\}  -  \frac{\ip{ms}}{m}  \left\{ W_2(1) + W_1\left(\frac{\ip{mt}}{m}-1\right) \right\} \right| \dd s \\
        =& \sigma \sup_{1 \leq t <\infty}   \left\{ w_\gamma\left(\frac{\ip{mt}}{m}\right) \right\}^{-1} \left(\frac{\ip{mt}}{m}\right)^{-1} \int_1^t R_\eta\left(\frac{\ip{ms}}{m}, \frac{\ip{mt}}{m}\right) \dd s,
\end{align*}
where the second equality follows from the fact that the integrand is zero on the interval $[\ip{mt}/m, t]$ since $\ip{ms} = \ip{mt}$ for $s \in [\ip{mt}/m, t]$. Then, from~\eqref{eq:ineq:fg}, we obtain that
\begin{multline*}
\left| \sup_{1 \leq t <\infty}  \left\{ w_\gamma\left(\frac{\ip{mt}}{m}\right) \right\}^{-1} \left(\frac{\ip{mt}}{m}\right)^{-1} \int_1^t R_\eta\left(\frac{\ip{ms}}{m}, \frac{\ip{mt}}{m}\right) \dd s \right. \\ \left. - \sup_{1 \leq t <\infty}  \{ w_\gamma(t) \}^{-1} t^{-1} \int_1^t R_\eta(s,t) \dd s \right| \\ \leq \sup_{1 \leq t <\infty}\left| \left\{ w_\gamma\left(\frac{\ip{mt}}{m}\right) \right\}^{-1} \left(\frac{\ip{mt}}{m}\right)^{-1} \int_1^t R_\eta\left(\frac{\ip{ms}}{m}, \frac{\ip{mt}}{m}\right) \dd s \right. \\ \left. - \{ w_\gamma(t) \}^{-1} t^{-1} \int_1^t R_\eta(s,t) \dd s \right|,
\end{multline*}
which is smaller than $I_m + J_m$, where
\begin{align*}
  I_m &= \sup_{1 \leq t <\infty}\left\{ w_\gamma\left(\frac{\ip{mt}}{m}\right) \right\}^{-1} \left(\frac{\ip{mt}}{m}\right)^{-1} \left| \int_{1}^t R_\eta\left(\frac{\ip{ms}}{m}, \frac{\ip{mt}}{m}\right) \dd s -  \int_1^t R_\eta(s,t) \dd s \right|,  \\
  J_m &= \sup_{1 \leq t <\infty} \left| \left\{ w_\gamma\left(\frac{\ip{mt}}{m}\right) \right\}^{-1} \left(\frac{\ip{mt}}{m}\right)^{-1}  - \{ w_\gamma(t) \}^{-1} t^{-1} \right| \int_1^t R_\eta(s,t) \dd s.
\end{align*}
To show~\eqref{eq:wc:S}, we shall verify that both $I_m$ and $J_m$ converge to zero almost surely. For $I_m$, we have
\begin{align*}
I_m \leq \sup_{1 \leq t <\infty} \left\{ w_\gamma\left(\frac{\ip{mt}}{m}\right) \right\}^{-1} \left(\frac{\ip{mt}}{m}\right)^{-1} t \times \sup_{1 \leq s \leq t < \infty} \left| R_\eta\left(\frac{\ip{ms}}{m}, \frac{\ip{mt}}{m}\right)  -  R_\eta(s,t) \right|.
\end{align*}
Using~\eqref{eq:w:gamma} and the fact that $\sup_{1 \leq t <\infty} mt / \ip{mt} \leq 2$ as soon as $m \geq 2$, the first supremum on the right is bounded by $2 \epsilon^{-1}$. The second supremum converges to zero almost surely from the almost sure uniform continuity of $R_\eta$ on $\{(s,t)\in [1,\infty)^2: s\le t\}$ shown in Lemma~\ref{lem:unif:cont:R:eta}. For $J_m$, we have
\begin{align*}
  J_m \leq  \sup_{1 \leq t <\infty} \left| \left\{ w_\gamma\left(\frac{\ip{mt}}{m}\right) \right\}^{-1} \frac{mt}{\ip{mt}} - \{ w_\gamma(t) \}^{-1} \right|  \times \sup_{1 \leq s \leq t < \infty} R_\eta(s,t).
\end{align*}
Since the second supremum on the right is almost surely bounded by Lemma~\ref{lem:unif:cont:R:eta}, it suffices to verify that the first supremum converges to zero. The latter is smaller than
\begin{align*}
\sup_{1 \leq t <\infty} \left| \left\{ w_\gamma\left(\frac{\ip{mt}}{m}\right) \right\}^{-1} \frac{mt}{\ip{mt}} - \{ w_\gamma(t) \}^{-1}\frac{mt}{\ip{mt}} + \{ w_\gamma(t) \}^{-1}\frac{mt}{\ip{mt}}  - \{ w_\gamma(t) \}^{-1} \right| \\ \leq 2 \sup_{1 \leq t <\infty} \left| \left\{ w_\gamma\left(\frac{\ip{mt}}{m}\right) \right\}^{-1}  - \{ w_\gamma(t) \}^{-1} \right| + \epsilon^{-1} \sup_{1 \leq t <\infty} \frac{ mt-\ip{mt}}{\ip{mt}}.
\end{align*}
The first term on the right-hand side converges to zero by the uniform continuity of the function $t \mapsto 1/w_\gamma(t)$ on $[1,\infty)$. The second term converges to zero since it is smaller than $\epsilon^{-1} m^{-1}$.

It remains to prove~\eqref{eq:wc:T}. Notice first that the limit in~\eqref{eq:wc:T} is almost surely finite since
$$
\sup_{1 \leq t <\infty}  t^{-1/2} \sqrt{ \int_1^t \{ R_\eta(s,t) \}^2 \dd s} \leq \sup_{1 \leq s \leq t <\infty} R_\eta(s,t).
$$
Then, starting  from~\eqref{eq:tilde:Tm} and proceeding as previously, it can be verified that the random variable $\sup_{m+1 \leq k < \infty} \{ w_T(k/m) \}^{-1}  \tilde T_m(k)$ is equal in distribution to
\begin{multline*}
\sigma \sup_{1 \leq t <\infty}  \left\{ w_T\left(\frac{\ip{mt}}{m}\right) \right\}^{-1}  \Bigg( \int_1^t \Bigg[  \frac{\ip{mt}}{m} \left\{ W_2(1) + W_1\left(\frac{\ip{ms}}{m}-1\right) \right\}  \\ 
\phantom{\sigma \sup_{1 \leq t <\infty}  \left\{ w_T\left(\frac{\ip{mt}}{m}\right) \right\}^{-1}  \Bigg( }
 \frac{\ip{ms}}{m}  \left\{ W_2(1) + W_1\left(\frac{\ip{mt}}{m}-1\right) \right\} \Bigg]^2 \dd s \Bigg)^{1/2} \\ 
 = \sigma \sup_{1 \leq t <\infty}  \left\{ w_\gamma\left(\frac{\ip{mt}}{m}\right) \right\}^{-1} \left(\frac{\ip{mt}}{m}\right)^{-1/2} \sqrt{ \int_1^t \left\{ R_\eta\left(\frac{\ip{ms}}{m}, \frac{\ip{mt}}{m}\right) \right\}^2 \dd s }.
\end{multline*}
Then, from~\eqref{eq:ineq:fg},
\begin{multline*}
\left| \sup_{1 \leq t <\infty}  \left\{ w_\gamma\left(\frac{\ip{mt}}{m}\right) \right\}^{-1} \left(\frac{\ip{mt}}{m}\right)^{-1/2} \sqrt{ \int_1^t \left\{ R_\eta\left(\frac{\ip{ms}}{m}, \frac{\ip{mt}}{m}\right) \right\}^2 \dd s} \right. \\ \left. - \sup_{1 \leq t <\infty}  \{ w_\gamma(t) \}^{-1} t^{-1/2} \sqrt{\int_1^t \{ R_\eta(s,t) \}^2 \dd s} \right| \\ \leq \sup_{1 \leq t <\infty}\left| \left\{ w_\gamma\left(\frac{\ip{mt}}{m}\right) \right\}^{-1} \left(\frac{\ip{mt}}{m}\right)^{-1/2} \sqrt{ \int_1^t \left\{ R_\eta\left(\frac{\ip{ms}}{m}, \frac{\ip{mt}}{m}\right) \right\}^2 \dd s} \right. \\ \left. - \{ w_\gamma(t) \}^{-1} t^{-1/2} \sqrt{\int_1^t \{ R_\eta(s,t) \}^2 \dd s} \right|,
\end{multline*}
which is smaller than $I_m' + J_m'$, where
\begin{align*}
  I_m' =& \sup_{1 \leq t <\infty} \left\{ w_\gamma\left(\frac{\ip{mt}}{m}\right) \right\}^{-1} \left(\frac{\ip{mt}}{m}\right)^{-1/2} \\ & \times  \left| \sqrt{\int_1^t \left\{ R_\eta\left(\frac{\ip{ms}}{m}, \frac{\ip{mt}}{m}\right) \right\}^2 \dd s } - \sqrt{ \int_1^t \{ R_\eta(s,t) \}^2 \dd s}  \right|,  \\
  J_m' =& \sup_{1 \leq t <\infty} \left| \left\{ w_\gamma\left(\frac{\ip{mt}}{m}\right) \right\}^{-1} \left(\frac{\ip{mt}}{m}\right)^{-1/2}  - \{ w_\gamma(t) \}^{-1} t^{-1/2} \right| \sqrt{ \int_1^t \{ R_\eta(s,t) \}^2 \dd s } .
\end{align*}
Using Minkwoski's inequality in the form $|\|f\|_2-\|g\|_2|\le \|f-g\|_2$, and using similar arguments as previously,
\begin{align*}
  I_m' &\leq \sup_{1 \leq t <\infty} \left\{ w_\gamma\left(\frac{\ip{mt}}{m}\right) \right\}^{-1} \left(\frac{\ip{mt}}{m}\right)^{-1/2}  \sqrt{\int_1^t \left\{ R_\eta\left(\frac{\ip{ms}}{m}, \frac{\ip{mt}}{m}\right) - R_\eta(s,t) \right\}^2 \dd s} \\
  &\leq \epsilon^{-1} \sup_{1 \leq t <\infty} \left(\frac{mt}{\ip{mt}}\right)^{ 1/2} \times \sup_{1 \leq s \leq t < \infty} \left| R_\eta\left(\frac{\ip{ms}}{m}, \frac{\ip{mt}}{m}\right)  -  R_\eta(s,t) \right| \as 0.
\end{align*}
Proceeding as for $J_m$, $J_m' \as 0$ since
\begin{multline*}
\sup_{1 \leq t <\infty} \left| \left\{ w_\gamma\left(\frac{\ip{mt}}{m}\right) \right\}^{-1} \left(\frac{mt}{\ip{mt}}\right)^{1/2} - \{ w_\gamma(t) \}^{-1}  \right| \\ \leq \sqrt{2} \sup_{1 \leq t <\infty} \left| \left\{ w_\gamma\left(\frac{\ip{mt}}{m}\right) \right\}^{-1} - \{ w_\gamma(t) \}^{-1}  \right| + \epsilon^{-1}  \sup_{1 \leq t <\infty}  \frac{\sqrt{mt}-\sqrt{\ip{mt}}}{\sqrt{\ip{mt}}}  \as 0
\end{multline*}
by the uniform continuity of the functions $t \mapsto 1/w_\gamma(t)$ and the fact that the argument of the second supremum on the right hand side is at most $\ip{mt}^{-1}$.
\end{proof}

\begin{proof}[\bf Proof of Theorem~\ref{thm:H0}]
From Lemmas~\ref{lem:ae:tilde} and~\ref{lem:wc:tilde}, we have that, for any fixed $\eta > 0$, $\epsilon > 0$ and $\gamma \geq 0$,
\begin{align*}
\sup_{m+1 \leq k < \infty} \{ w_R(k/m) \}^{-1}  R_m(k) &\leadsto \sigma \sup_{1 \leq s \leq t <\infty} \{ w_\gamma(t) \}^{-1} R_\eta(s,t), \\
\sup_{m+1 \leq k < \infty} \{ w_S(k/m) \}^{-1}  S_m(k) &\leadsto \sigma \sup_{1 \leq t <\infty}  \{ w_\gamma(t) \}^{-1} t^{-1}  \int_1^t |  R_\eta(s,t) | \dd s, \\
\sup_{m+1 \leq k < \infty} \{ w_T(k/m) \}^{-1}  T_m(k) &\leadsto \sigma \sup_{1 \leq t <\infty} \{ w_\gamma(t) \}^{-1} t^{-1/2} \sqrt{ \int_1^t \{ R_\eta(s,t) \}^2 \dd s},
\end{align*}
where $w_\gamma$ is defined in~\eqref{eq:w:gamma} and the random function $R_\eta$ is defined in~\eqref{eq:R:eta}.

It thus remains to verify that the expressions of the limiting random variables can be simplified to coincide in distribution with those given in the statement of the theorem. The latter is merely a consequence of the fact that the random functions
$$
U(s,t) = (t-s) W_2(1) + t W_1(s-1) - s W_1(t-1), \qquad 1 \leq s \leq t,
$$
and
$$
V(s,t) = t W(s) - s W(t), \qquad 1 \leq s \leq t,
$$
are equal in distribution. Since $U$ and $V$ are centered Gaussian processes whose sample paths are continuous almost surely, the equality in distribution is a direct consequence of the equality of their covariance functions. Indeed, for any $1 \leq s \leq t$ and $1 \leq s' \leq t'$, it is an exercise to verify by direct computation that
\begin{align*}
\Cov\{U(s,t), U(s',t')\} &
%= (t-s)(t'-s') + \Cov\{V(s-1,t-1), V(s'-1,t'-1)\} \\ 
%&
= 
\Cov\{V(s,t), V(s',t')\}.
\end{align*}
\end{proof}

\begin{proof}[\bf Proof of Proposition~\ref{prop:R0:infinite}]
We have for all $k \geq 1$
\begin{align*}
  \sup_{1 \leq s \leq t < \infty} \frac{1}{t^{3/2}} | t W(s) - s W(t) | &\geq \frac{1}{(2^k)^{3/2}} | 2^k W(2^{k-1}) - 2^{k-1} W(2^k) | \\
                                           &= \frac{2^{k-1}}{2^{3k/2}} | W(2^{k-1}) -\{ W(2^k) -  W(2^{k-1}) \} | \\
                                           &= \frac{1}{2^{k/2+1}} | W(2^{k-1}) -\{ W(2^k) -  W(2^{k-1}) \} |.
\end{align*}
Consider an arbitrary fixed $M > 0$ and the events $D_k = \{ | W(2^{k-1}) -\{ W(2^k) -  W(2^{k-1}) \} | \geq M 2^{k/2+1} \}$. It is sufficient to show that $\Pr\left(
  \bigcup_{k=1}^\infty D_k \right) = 1$ (since this shows that the supremum is at least $M$ with probability 1), or equivalently, $\Pr\left(\bigcap_{k=1}^\infty D_k^\C \right) = 0$. Now,
\begin{align*}
\Pr\left(\bigcap_{k=1}^\infty D_k^\C \right) = \lim_{r \to \infty} \Pr\left(\bigcap_{k=1}^r D_k^\C \right) &= \lim_{r \to \infty} \Pr(D_1^\C) \prod_{k=2}^r \Pr \left(D_k^\C \,\Big |\, \bigcap_{j=1}^{k-1} D_j^\C \right) \\ &= \Pr(D_1^\C)\lim_{r \to \infty}  \prod_{k=2}^r \left\{ 1 - \Pr \left(D_k \,\Big |\,\bigcap_{j=1}^{k-1} D_j^\C \right) \right\},
\end{align*}
so that it is enough to show that there exists $\delta_M > 0$ such that $\Pr \left(D_k \mid \bigcap_{j=1}^{k-1} D_j^\C \right) \geq \delta_M$ for all $k \geq 2$. The latter holds with $\delta_M = \Pr(|Z| \geq 2^{3/2} M)/2 > 0$, where $Z$ is a standard normal random variable, since for all $k \geq 2$, with $A_k=\{ W(2^k) - W(2^{k-1}) \text{ and }W(2^{k-1}) \text{ have opposite sign}\}$,
\begin{align*}
\Pr \left(D_k \,\Big |\, \bigcap_{j=1}^{k-1} D_j^\C \right) & \geq \Pr \Big( A_k,| W(2^k) -  W(2^{k-1}) | \geq M 2^{k/2+1} \Big)\\
&=\Pr\Big(| W(2^k) -  W(2^{k-1}) | \geq M 2^{k/2+1} \Big)\Pr(A_k)\\
&= \Pr(|Z| \geq 2^{3/2} M)/2.
\end{align*}
In the above, we have used the fact that the sign of the increment and its magnitude are independent of the past and each other, and that the increment is Gaussian with mean $0$ and variance $2^{k-1}$.
\end{proof}

%%%%%%%%%%%%%%%%%%%%%%%%%%%%%%%%%%%%%%%%%%%%%%%%%%%%%%%%%%%%%%%%%%%%%%%

\section{Proofs of the results under alternatives}
\label{app:H1}

\begin{proof}[\bf Proof of Theorem~\ref{thm:H1:R}]
  Let us first prove the claim when $(iv)$ in Condition~\ref{cond:H1:R} holds. Recall the definition of the function $w_\gamma$ in~\eqref{eq:w:gamma} and notice that $w_\gamma(t) \leq 1$ for all $t \in [1,\infty)$, $\epsilon \geq 0$ and $\gamma \geq 0$.  Thus, for all $\eta \geq 0$, $\epsilon \geq 0$ and $\gamma \geq 0$,
  \begin{align}
    \nonumber
    \sup_{m+1 \leq k < \infty} & \{ w_R(k/m) \}^{-1}  R_m(k) \geq  \sup_{m+1 \leq k < \infty} (k/m)^{-3/2 - \eta}  R_m(k) \\
    \nonumber
    & \overset{k = k^\star_m + \ip{cm}}{\geq}  \left(\frac{k^\star_m + \ip{cm}}{m}\right)^{-3/2 - \eta}   \max_{m \leq j \leq k^\star_m + \ip{cm} - 1} \frac{j (k^\star_m + \ip{cm} - j)}{m^{3/2}}  \big| \bar X^{\sm}_{1:j} - \bar X^{\sm}_{j+1:k^\star_m + \ip{cm}} \big| \\
    \nonumber
    & \overset{j = k^\star_m}{\geq} \left(\frac{k^\star_m}{m} + c \right)^{-3/2 - \eta}  \frac{k^\star_m \ip{cm}}{m^{3/2}}  \big| \bar X^{\sm}_{1:k^\star_m} - \bar X^{\sm}_{k^\star_m+1:k^\star_m + \ip{cm}} \big| \\
    \nonumber
                               &  \geq \left(\frac{k^\star_m}{m} + c \right)^{-3/2 - \eta}  \frac{k^\star_m (cm - 1)}{m^{3/2}}  \big| \bar X^{\sm}_{1:k^\star_m} - \Ex(X^{\sm}_{k^\star_m}) + \Ex(X^{\sm}_{k^\star_m}) - \Ex(X^{\sm}_{k^\star_m+ 1}) \\
    \nonumber
                               & \phantom{\left(\frac{k^\star_m}{m} + c \right)^{-3/2 - \eta}  \frac{k^\star_m (cm - 1)}{m^{3/2}}  \big|}\qquad +  \Ex(X^{\sm}_{k^\star_m+ 1}) - \bar X^{\sm}_{k^\star_m+1:k^\star_m + \ip{cm}} \big| \\
    \nonumber
                               &  \geq m^{1/2} \left(\frac{k^\star_m}{m} + c \right)^{-3/2 - \eta}   \left( \frac{k^\star_m}{m} \right) \left(c - \frac{1}{m} \right)  \Big\{ | \Ex(X^{\sm}_{k^\star_m+1}) - \Ex(X^{\sm}_{k^\star_m}) |   \\
    \label{eq:last:eq}
                               & \qquad \qquad \qquad \qquad \qquad - | \bar X^{\sm}_{1:k^\star_m} - \Ex(X^{\sm}_{k^\star_m})- \bar X^{\sm}_{k^\star_m+1:k^\star_m + \ip{cm}} + \Ex(X^{\sm}_{k^\star_m+ 1}) | \Big\}
\end{align}
by the triangle inequality. Since $(iv)$ in Condition~\ref{cond:H1:R} holds, for $m$ large enough,~\eqref{eq:last:eq} is at least $(C_1 + c)^{-3/2 - \eta} \, c_1 \, (c/2)$ times 
\begin{equation}
  \label{eq:inf}
 m^{1/2} \big\{ | \Ex(Y^{\sm}_1) - \Ex(Y^{\sz}_1) |  - | \bar Y^{\sz}_{1:k^\star_m} - \Ex(Y^{\sz}_1) - \bar Y^{\sm}_{k^\star_m+1:k^\star_m + \ip{cm}} + \Ex(Y^{\sm}_1) | \big\}.
\end{equation}
Notice that~\eqref{eq:before} implies that $\sqrt{m} \{ \bar Y^{\sz}_{1:k^\star_m} - \Ex(Y^{\sz}_1) \} = (m/k^\star_m)^{1/2} \times \sqrt{k^\star_m} \{ \bar Y^{\sz}_{1:k^\star_m} - \Ex(Y^{\sz}_1) \} = O_\Pr(1)$ and that~\eqref{eq:after1} implies that $\sqrt{m} \{  \bar Y^{\sm}_{k^\star_m+1:k^\star_m + \ip{cm}} - \Ex(Y^{\sm}_1) \} = O_\Pr(1)$.  Hence,~\eqref{eq:inf} diverges in probability to infinity as a consequence of $(ii)$ in Condition~\ref{cond:H1:R}. 

Assume now that $(v)$ in Condition~\ref{cond:H1:R} holds. Then, we can use the fact that
\begin{align*}
  \sup_{m+1 \leq k < \infty}& \{ w_R(k/m) \}^{-1}  R_m(k) \overset{ k = k^\star_m + \ip{c k^\star_m}}{\geq}  \left(\frac{k^\star_m + \ip{c k^\star_m}}{m} \right)^{-3/2 - \eta}  R_m (k^\star_m + \ip{c k^\star_m}) \\
                            &\geq  \frac{m^{3/2 + \eta}}{(k^\star_m + c k^\star_m)^{3/2 + \eta}}  \max_{m \leq j \leq k^\star_m + \ip{c k^\star_m}-1} \frac{j (k^\star_m + \ip{c k^\star_m} - j)}{m^{3/2}}  \big| \bar X^{\sm}_{1:j} - \bar X^{\sm}_{j+1:k^\star_m + \ip{c k^\star_m}} \big| \\
  & \overset{ j = k^\star_m}{\geq}  \frac{m^\eta k^\star_m (c k^\star_m -1)}{(k^\star_m + c k^\star_m)^{3/2 + \eta}}  \big| \bar X^{\sm}_{1:k^\star_m} - \bar X^{\sm}_{k^\star_m+1:k^\star_m + \ip{c k^\star_m}} \big| \\
                            &\geq \frac{m^\eta (k^\star_m)^{1/2 - \eta} (c - 1/ k^\star_m)}{(1 + c)^{3/2 + \eta}}  \big| \bar X^{\sm}_{1:k^\star_m} - \Ex(X^{\sm}_{k^\star_m}) + \Ex(X^{\sm}_{k^\star_m}) \\
                            & \qquad \qquad \qquad \qquad \qquad \qquad  - \Ex(X^{\sm}_{k^\star_m+ 1}) +  \Ex(X^{\sm}_{k^\star_m+ 1}) - \bar X^{\sm}_{k^\star_m+1:k^\star_m + \ip{c k^\star_m}} \big|,
\end{align*}
which, for $m$ large enough, is larger than
\begin{equation}
  \label{eq:last:eq2}
  \frac{m^{1/2} c}{2 (1 + c)^{3/2 + \eta}}  \big\{ | \Ex(Y^{\sm}_1) - \Ex(Y^{\sz}_1) |   - | \bar Y^{\sz}_{1:k^\star_m} - \Ex(Y^{\sz}_1)  -  \bar Y^{\sm}_{k^\star_m+1:k^\star_m + \ip{c k^\star_m}} + \Ex(Y^{\sm}_1) | \big\}.
\end{equation}
This time, since $k^\star/m \to \infty$,~\eqref{eq:before} implies that $\sqrt{m} \{ \bar Y^{\sz}_{1:k^\star_m} - \Ex(Y^{\sz}_1) \} = (m/k^\star_m)^{1/2} \times \sqrt{k^\star_m} \{ \bar Y^{\sz}_{1:k^\star_m} - \Ex(Y^{\sz}_1) \} = o_\Pr(1)$ while~\eqref{eq:after2} implies that $(m/k^\star_m)^{1/2} \sqrt{k^\star_m} \{  \bar Y^{\sm}_{k^\star_m+1:k^\star_m + \ip{c k^\star_m}} - \Ex(Y^{\sm}_1) \} = o_\Pr(1)$. Therefore,~\eqref{eq:last:eq2} diverges in probability to infinity as a consequence of $(ii)$ in Condition~\ref{cond:H1:R}.
\end{proof}

%%%%%%%%%%%%%%%%%%%%%%%%%%%%%%%%%%%%%%%%%%%%%%%%%%%%%%%%%%%%%%%%%%%%%%%%

\begin{proof}[\bf Proof of Theorem~\ref{thm:H1:ST}]
  We adapt the proof of Proposition~2.7 of \cite{KojVer20a} to the current setting. Given a set $\mathcal{S}$, let $\ell^\infty(\mathcal{S})$ denote the space of all bounded real-valued functions on $\mathcal{S}$ equipped with the uniform metric. Fix $T > c$. For any $s \in [1, T]$, let
$$
W_{m,Y}(s) = m^{-1/2} \sum_{i=1}^{\ip{ms}} \{ Y_i - \Ex(Y_1) \} \qquad \text{ and } \qquad W_{m,Z}(s) = m^{-1/2} \sum_{i=1}^{\ip{ms}} \{ Z_i - \Ex(Z_1) \}.
$$
From Condition~\ref{cond:H1:ST}, we have that $W_{m,Y}$ converges weakly to a standard Brownian motion $W_Y$ in $\ell^\infty([1,T])$ and $W_{m,Z}$ converges weakly to a standard Brownian motion $W_Z$ in $\ell^\infty([1,T])$.

Let $J_m(s,t) = m^{-1/2} H_m(s,t) - K_c(s,t)$, $(s,t) \in \Delta_T = \{(s,t) \in [1,T]^2 : s \leq t \}$. The fact that $m^{-1/2} H_m \p K_c$ in $\ell^\infty(\Delta_T)$ is proven if we show that
\begin{equation}
  \label{eq:claim1}
\sup_{(s,t) \in \Delta_T} | J_m(s,t) | \p 0.
\end{equation}
The supremum on the left-hand side of \eqref{eq:claim1} is equal to
\begin{equation}
\label{eq:max:sup}
\max \left\{\sup_{1 \leq s \leq t \leq c} | J_m(s,t) |,  \sup_{1 \leq s \leq c\leq t \leq T} | J_m(s,t) |, \sup_{c \leq s \leq t \leq T} | J_m(s,t) | \right\}.
\end{equation}
Notice first that
\begin{equation}
\label{eq:Kc:parts}
K_c(s,t) = \left\{
  \begin{array}{ll}
    0, &\text{if } 1 \leq s \leq t \leq c, \\
    s(t-c) \{\Ex(Y_1)- \Ex(Z_1)\}, &\text{if } 1 \leq s \leq c\leq t \leq T, \\
    c(t-s)   \{\Ex(Y_1)- \Ex(Z_1)\}, &\text{if } c \leq s \leq t \leq T.
  \end{array}
\right.
\end{equation}
Furthermore, for any $(s,t) \in \Delta_T \cap [1,c]^2$, let
$$
D_{m,Y}(s,t) = \sqrt{m} \lambda_m(s,t) \{ \bar X^{\sm}_{\ip{ms}+1:\ip{mt}} - \Ex(X^{\sm}_{k^\star_m}) \}  =  W_{m,Y}(t) - W_{m,Y}(s),
$$
where $\lambda_m(s,t) = (\ip{mt} - \ip{ms})/m$, $(s, t) \in \Delta_T$, and, for any $(s,t) \in \Delta_T \cap [c,T]^2$, let
$$
D_{m,Z}(s,t) = \sqrt{m} \lambda_m(s,t) \{ \bar X^{\sm}_{\ip{ms}+1:\ip{mt}} - \Ex(X^{\sm}_{k^\star_m+1}) \}  =  W_{m,Z}(t) - W_{m,Z}(s).
$$
Under the assumptions of the theorem, from the continuous mapping theorem, $D_{m,Y} \leadsto D_Y$ in $\ell^\infty (\Delta_T \cap [1,c]^2)$ and $D_{m,Z} \leadsto D_Z$ in $\ell^\infty(\Delta_T \cap [c,T]^2)$, where $D_Y(s,t) = W_Y(t) - W_Y(s)$ and $D_Z(s,t) = W_Z(t) - W_Z(s)$.

From the expression of $K_c$ given in~\eqref{eq:Kc:parts}, for the first supremum in~\eqref{eq:max:sup}, we obtain that
$$
\sup_{1 \leq s \leq t \leq c} | J_m(s,t) | = m^{-1/2} \sup_{1 \leq s \leq t \leq c} | H_m(s,t) | = o(1) \times O_\Pr(1) \p 0,
$$
since $H_m$ converges weakly to $(s,t) \mapsto (t-s) D_Y(0,s) - s D_Y(s,t)$ in $\ell^\infty(\Delta_T \cap [1,c]^2)$ as a consequence of the fact that, for any $1 \leq s \leq t \leq c$, $H_m(s,t) = \lambda_m(s,t) D_{m,Y}(0,s) -\lambda_m(0,s) D_{m,Y}(s,t)$ and $\sup_{(s,t) \in \Delta_T} |\lambda_m(s,t) - (t-s)| \leq 2/m$, and from the continuous mapping theorem.

Regarding the second supremum, for any $1 \leq s \leq c\leq t \leq T$, we have that
$$
\lambda_m(s,t) \bar X_{\ip{ms}+1:\ip{mt}} = \lambda_m(s,c) \bar X_{\ip{ms}+1:\ip{mc}} + \lambda_m(c,t) \bar X_{\ip{mc}+1:\ip{mt}}.
$$
Thus, on one hand,
$$
m^{-1/2} H_m(s,t) = \lambda_m(0,s) \{ \lambda_m(s,t) \bar X_{1:\ip{ms}} - \lambda_m(s,c) \bar X_{\ip{ms}+1:\ip{mc}} - \lambda_m(c,t) \bar X_{\ip{mc}+1:\ip{mt}} \}.
$$
On the other hand, from~\eqref{eq:Kc:parts} and using again the fact that $\sup_{(s,t) \in \Delta_T} |\lambda_m(s,t) - (t-s)| \leq 2/m$,
$$
K_c(s,t) =  \lambda_m(0,s) \{ \lambda_m(s,t)  \Ex(Y_1) - \lambda_m(s,c) \Ex(Y_1) - \lambda_m(c,t) \Ex(Z_1) \} + O(1/m),
$$
where the term $O(1/m)$ is uniform in $s,t,c$. By the triangle inequality and using the fact that $\sup_{(s,t) \in \Delta_T} |\lambda_m(s,t)| \leq T$, it then follows that
\begin{multline*}
  \sup_{1 \leq s \leq c\leq t \leq T} | J_m(s,t) | \leq m^{-1/2} \, T \left[ \sup_{1 \leq s \leq c} |D_{m,Y}(0,s)|  \right. \\ \left. + \sup_{1 \leq s \leq c} |D_{m,Y}(s,c)| + \sup_{c\leq t \leq T} | D_{m,Z}(c,t) |  \right] = o(1) \times O_\Pr(1).
\end{multline*}
Similarly, for the third supremum, for any $c \leq s \leq T$,
$$
\lambda_m(0,s) \bar X_{1:\ip{ms}} = \lambda_m(0,c) \bar X_{1:\ip{mc}} + \lambda_m(c,s) \bar X_{\ip{mc}+1:\ip{ms}},
$$
and, hence, on one hand, for any $c \leq s \leq t \leq T$,
\begin{multline*}
m^{-1/2} H_m(s,t) = \lambda_m(s,t) \{ \lambda_m(0,c) \bar X_{1:\ip{mc}} + \lambda_m(c,s) \bar X_{\ip{mc}+1:\ip{ms}} - \lambda_m(0,s) \bar X_{\ip{ms}+1:\ip{mt}}\},
\end{multline*}
while, on the other hand,
$$
K_c(s,t) =  \lambda_m(s,t) \{ \lambda_m(0,c)  \Ex(Y_1) +  \lambda_m(c,s) \Ex(Z_1) - \lambda_m(0,s) \Ex(Z_1) \} + O(1/m),
$$
with the term $O(1/m)$ again uniform in $s,t,c$. Finally, by the triangle inequality,
\begin{multline*}
  \sup_{c \leq s \leq t \leq T} | J_m(s,t) |  \leq m^{-1/2} T \left[ | D_{m,Y}(0,c) | \right. \\ \left. + \sup_{c\leq s \leq T} | D_{m,Z}(c,s) | + \sup_{c\leq s \leq t \leq T}  | D_{m,Z}(s,t) |  \right] = o(1) \times O_\Pr(1),
\end{multline*}
which completes the proof of~\eqref{eq:claim1}.

It remains to prove~\eqref{eq:H1:ST}. Recall that from the definition of the function $w_\gamma$ in~\eqref{eq:w:gamma}, $w_\gamma(t) \leq 1$ for all $t \geq 1$, $\epsilon > 0$ and $\gamma \geq 0$. Hence,
\begin{align*}
  \sup_{m+1 \leq k < \infty} &\{ w_S(k/m) \}^{-1}  S_m(k) \geq \sup_{m+1 \leq k \leq \ip{mT}} (k/m)^{-5/2-\eta}  \frac{1}{m} \sum_{j=m}^{k-1} \frac{j (k-j)}{m^{3/2}}  | \bar X^{\sm}_{1:j} - \bar X^{\sm}_{j+1:k} | \\
                             &= \sup_{t \in [1,T]} \left( \frac{\ip{mt}}{m} \right)^{-5/2-\eta}  \int_1^t |H_m(s,t)| \dd s \geq \sup_{t \in [1,T]} t^{-5/2-\eta}  \int_1^t |H_m(s,t)| \dd s \\
                             &\geq T^{-5/2-\eta} \, m^{1/2} \sup_{t \in [1,T]} \int_1^t |m^{-1/2} H_m(s,t)| \dd s \p \infty
\end{align*}
since, by the continuous mapping theorem,
$$
\sup_{t \in [1,T]} \int_1^t |m^{-1/2} H_m(s,t)| \dd s \p \sup_{t \in [1,T]} \int_1^t |K_c(s,t)| \dd s > 0.
$$
Similarly, $\sup_{m+1 \leq k < \infty} \{ w_T(k/m) \}^{-1}  T_m(k)$ is larger than
\begin{align*}
  \sup_{m+1 \leq k \leq \ip{mT}} &(k/m)^{-2-\eta}  \sqrt{ \frac{1}{m}  \sum_{j=m}^{k-1} \left\{ \frac{j (k-j)}{m^{3/2}}  ( \bar X_{1:j} - \bar X_{j+1:k} ) \right\}^2 } \\
                             &\geq \sup_{t \in [1,T]} t^{-2-\eta}  \sqrt{\int_1^t \{ H_m(s,t) \}^2 \dd s}  \\
                             &\geq T^{-2-\eta} m^{1/2} \sup_{t \in [1,T]} \sqrt{ \int_1^t \{ m^{-1/2} H_m(s,t) \}^2 \dd s } \p \infty
\end{align*}
since, by the continuous mapping theorem,
$$
\sup_{t \in [1,T]} \sqrt{ \int_1^t \{ m^{-1/2} H_m(s,t) \}^2 \dd s } \p \sup_{t \in [1,T]} \sqrt{ \int_1^t \{ K_c(s,t) \}^2 \dd s }  > 0.
$$
\end{proof}

%%%%%%%%%%%%%%%%%%%%%%%%%%%%%%%%%%%%%%%%%%%%%%%%%%%%%%%%%%%%%%%%%%%%%%%%%%%%%%
%% This is needed because of the imsart class 
\def\propequivdistR{\ref{prop:equiv:dist:R}}
\def\propthetaIF{\ref{prop:theta:IF}}
\section{Proofs of Propositions~\propequivdistR~and~\propthetaIF}
\label{app:others}

\begin{proof}[\bf Proof of Proposition~\ref{prop:equiv:dist:R}]
For any fixed $\eta > 0$, $\epsilon > 0$ and $\gamma \geq 0$, we have
\begin{align*}
  \sup_{1 \leq s \leq t < \infty} &\frac{1}{t^{3/2+\eta} \max[ \{ (t-1)/t \}^\gamma, \epsilon]  } |t W(s) - s W(t) | \\
                                  &=\sup_{1 \leq s \leq t < \infty}\frac{1}{t^{3/2+\eta}\max\{(1-1/t)^\gamma,\epsilon\}} \left| \frac{ts}{s} W(s) - \frac{st}{t} W(t) \right|\\
                                  &=\sup_{1 \leq s \leq t < \infty}\frac{ts}{t^{3/2+\eta}\max\{(1-1/t)^\gamma,\epsilon\}} \left| \frac{1}{s} W(s) - \frac{1}{t} W(t) \right| \\
                                  &=\sup_{1 \leq s \leq t < \infty}\frac{s}{t^{1/2+\eta}\max\{(1-1/t)^\gamma,\epsilon\}} \left| \frac{1}{s} W(s) - \frac{1}{t} W(t) \right|.
\end{align*}
Let $u=1/t$ and $v=1/s$. Then, the last expression on the right is equal to
\begin{multline*}
\sup_{1\le   1/v \leq 1/u < \infty}\frac{1/v}{(1/u)^{1/2+\eta}\max\{(1-u)^\gamma,\epsilon\}}|v W(1/v) - u W(1/u) |\\
=\sup_{0<u\le v \le 1}\frac{u^{1/2+\eta}}{v\max\{(1-u)^\gamma,\epsilon\}}|v W(1/v) - u W(1/u) |.
\end{multline*}
The claim finally follows from the Brownian inversion property stating that $\{ t W(1/t) \}_{t\geq 0}$ is also a standard Brownian motion.
\end{proof}

\begin{proof}[\bf Proof of Proposition~\ref{prop:theta:IF}]
Fix $\eta > 0$, $\epsilon > 0$ and $\gamma \geq 0$, and, for any $k \geq m + 1$, let
\begin{align}
  \nonumber
  V_m(k) &= \epsilon^{-1} \sup_{m+1 \leq k \leq \infty}  (k/m)^{-3/2-\eta} \max_{m \leq j \leq k-1} \frac{j (k-j)}{m^{3/2}} | \theta_{1:j} - \theta_{j+1:k} - \bar \IF_{1:j} + \bar \IF_{j+1:k} | \\
  \label{eq:Vm}
         &= \epsilon^{-1} \sup_{m+1 \leq k \leq \infty}  (k/m)^{-3/2-\eta} \max_{m \leq j \leq k-1} \frac{j (k-j)}{m^{3/2}} | R_{1,j} - R_{j+1:k}  |.
\end{align}
Then, using the reverse triangle inequality for the maximum norm,
\begin{multline*}
\sup_{m+1 \leq k \leq \infty}  \{ w_R(k/m) \}^{-1} | R_m^\theta(k) - R_m^\IF(k) | \leq V_m(k) \\
  \leq \epsilon^{-1} m^\eta \sup_{m+1 \leq k \leq \infty}  k^{-3/2-\eta}  \left\{ \max_{m \leq j \leq k-1} j (k-j) | R_{1,j} |  +   \max_{m \leq j \leq k-1} j (k-j) | R_{j+1,k} | \right\} = o_\Pr(1)
\end{multline*}
since
\begin{align*}
  m^\eta & \sup_{m+1 \leq k \leq \infty}  k^{-3/2-\eta}  \max_{m \leq j \leq k-1} j (k-j) | R_{1,j} |  \leq  m^\eta \sup_{m+1 \leq k \leq \infty}  k^{-1/2-\eta}  \max_{m \leq j \leq k-1} j | R_{1,j} | \\
         & \leq  m^\eta \sup_{m+1 \leq k \leq \infty}  k^{-\eta}   \times \sup_{m+1 \leq k \leq \infty} k^{-1/2} \max_{m \leq j \leq k-1} j | R_{1,j} | \\
         & \leq \sup_{m+1 \leq k \leq \infty} k^{-1/2} \max_{1 \leq i < j \leq k} (j - i + 1) | R_{i,j} | = o_\Pr(1)
\end{align*}
as a consequence of Condition~\ref{cond:remainders} and, similarly,
\begin{align*}
  m^\eta & \sup_{m+1 \leq k \leq \infty}  k^{-3/2-\eta}  \max_{m \leq j \leq k-1} j (k-j) | R_{j+1,k} | \leq  m^\eta \sup_{m+1 \leq k \leq \infty}  k^{-1/2-\eta}  \max_{m \leq j \leq k-1} (k-j) | R_{j+1,k} | \\
         &\leq  m^\eta \sup_{m+1 \leq k \leq \infty}  k^{-\eta}  \times \sup_{m+1 \leq k \leq \infty} k^{-1/2} \max_{m \leq j \leq k-1} (k-j) | R_{j+1,k} | \\
         & \leq \sup_{m+1 \leq k \leq \infty} k^{-1/2} \max_{1 \leq i < j \leq k} (j - i + 1) | R_{i,j} | = o_\Pr(1).
\end{align*}
Using the reverse triangle inequality for the $L_1$ norm, the claim for $S_m^\theta$ and $S_m^\IF$ follows from the fact that
\begin{multline*}
\sup_{m+1 \leq k \leq \infty}  \{ w_S(k/m) \}^{-1} | S_m^\theta(k) - S_m^\IF(k) |  \\ \leq \epsilon^{-1} \sup_{m+1 \leq k \leq \infty}  (k/m)^{-5/2-\eta} \frac{1}{m} \sum_{j=m}^{k-1} \frac{j (k-j)}{m^{3/2}} | R_{1,j} - R_{j+1:k} | \leq \frac{k-m}{k} V_m(k),
\end{multline*}
 where $V_m$ is defined in~\eqref{eq:Vm}. Similarly, the claim for $T_m^\theta$ and $T_m^\IF$ follows from the fact that
\begin{multline*}
\sup_{m+1 \leq k \leq \infty}  \{ w_T(k/m) \}^{-1} | T_m^\theta(k) - T_m^\IF(k) |  \\ \leq \epsilon^{-1} \sup_{m+1 \leq k \leq \infty}  (k/m)^{-2-\eta} \sqrt{\frac{1}{m} \sum_{j=m}^{k-1} \left\{\frac{j (k-j)}{m^{3/2}}  (R_{1,j} - R_{j+1:k}) \right\}^2 } \leq \sqrt{\frac{k-m}{k}} V_m(k).
\end{multline*}
\end{proof}

\end{appendix}

\bibliographystyle{imsart-nameyear}
\bibliography{biblio}

\end{document}